    \newtheorem{prop}{Proposition}[section]
    \numberwithin{prop}{section} 
    \newtheorem{lem}{Lemma}[section]
    \numberwithin{lem}{section} 
    \newtheorem{rem}{Remark}[section]
    \numberwithin{rem}{section} 
    \numberwithin{cor}{section} 
    \newtheorem{thm}{Theorem}[section]
    \numberwithin{thm}{section} 
    \newtheorem{defn}{Definition}
    \numberwithin{defn}{section}
    \numberwithin{fact}{section}
    \theoremstyle{plain}
    \newtheorem*{thm*}{Theorem}
    \newtheorem*{rem*}{Remark}
    \newcommand{\beq}{\begin{equation}}
    \newcommand{\eeq}{\end{equation}}
    \newcommand{\beqnn}{\begin{equation*}}
    \newcommand{\eeqnn}{\end{equation*}}
    \def\bb {\boldsymbol{b}}
    \def\bd {\boldsymbol{d}}
    \def\bdelta {\boldsymbol{\delta}}
    \def\tzero {t_{0}}
    \def\be {\boldsymbol{e}}
    \def\bp {\boldsymbol{p}}
    \def\bq {\boldsymbol{q}}
    \def\bu {\boldsymbol{u}}
    \def\bs {\boldsymbol{s}}
    \def\bv {\boldsymbol{v}}
    \def\bx {\boldsymbol{x}}
    \def\by {\boldsymbol{y}}
    \def\R {\mathbb{R}}
    \def\bxi {\boldsymbol{\xi}}
    \newcommand{\normsq}[1]{\left\Vert{#1} \right\Vert_{2}^{2}}
    \newcommand{\normtwo}[1]{\left\Vert{#1} \right\Vert_{2}}
    \newcommand{\normone}[1]{\left\Vert{#1} \right\Vert_{1}}
    \newcommand{\norminf}[1]{\left\Vert{#1} \right\Vert_{\infty}}
    \newcommand{\matr}[1]{\bm{#1}}
    \def\cone{\mathrm{cone}}
    \def\dom {\mathrm{dom~}}    
    \def\interior {\mathrm{int }}  
    \def\ri {\mathrm{ri}}      
    \def\sign {\mathrm{sgn}}
    \def\diag {\mathrm{diag}}
    \DeclareMathAlphabet{\mymathbb}{U}{BOONDOX-ds}{m}{n}
    \def\Binf {\mathrm{B}_{\infty}}
    \def\proj {\mathrm{proj}}
    \def\Rn {\R^n}
    \def\Rm {\R^m}
    \def\gmRn {\Gamma_0(\Rn)}   
    \def\gmRm {\Gamma_0(\Rm)}
    \DeclareMathOperator*{\argmin}{arg\,min}
    \def\bxsol {\bx^{s}}
    \def\bpsol {\bp^{s}}
    \newcommand{\image}[1]{\mathrm{Im}({#1})}
    \newcommand{\rank}[1]{\mathrm{rank}({#1})}
    \def\bpo {\bp_{0}}
    \def\bpa {\bp_{a}}
    \def\eqset {\mathcal{E}}
    \def\eqcset {\mathcal{E}^{\mathsf{C}}}
    \def\ep {\mathcal{E}(\bp)}
    \def\epdo {\mathcal{E}(\bpo + \Delta\bd(\bpo,t))}
    \def\ecpdo {\mathcal{E}^{\mathsf{C}}(\bpo + \Delta\bd(\bpo,t))}
    \def\epo {\mathcal{E}(\bp_{0})}
    \def\ecpo {\mathcal{E}^{\mathsf{C}}(\bp_{0})}
    \def\bvlsq {\hat{\bv}^{\text{LSQ}}}
    \def\Atop {\matr{A}^{\top}}
\title{A fast algorithm for solving the lasso problem exactly without homotopy using differential inclusions} 
\author{Gabriel P. Langlois$^1$ \and J\'er\^ome Darbon$^2$}
\date{%
    $^1$Department of Mathematics, University of Illinois Urbana-Champaign (\url{gabriel_provencher_langlois@alumni.brown.edu}). \textbf{Corresponding author}.\\%
    $^2$Division of Applied Mathematics, Brown University (\url{jerome_darbon@brown.edu}).\\[2ex]%
    \today
}
\begin{document}
\maketitle
\begin{abstract}
We prove in this work that the well-known lasso problem can be solved exactly without homotopy using novel differential inclusions techniques. Specifically, we show that a selection principle from the theory of differential inclusions transforms the dual lasso problem into the problem of calculating the trajectory of a projected dynamical system that we prove is integrable. Our analysis yields an exact algorithm for the lasso problem, numerically up to machine precision, that is amenable to computing regularization paths and is very fast. Moreover, we show the continuation of solutions to the integrable projected dynamical system in terms of the hyperparameter naturally yields a rigorous homotopy algorithm. Numerical experiments confirm that our algorithm outperforms the state-of-the-art algorithms in both efficiency and accuracy. Beyond this work, we expect our results and analysis can be adapted to compute exact or approximate solutions to a broader class of polyhedral-constrained optimization problems.
\end{abstract}

\noindent \textit{Keywords:} Differential inclusions, projected dynamical systems, optimization, exact solutions, algorithms, lasso, basis pursuit denoising, compressive sensing, machine learning, inverse problems.

\medskip
\noindent \textbf{Mathematics Subject Classification: 90C25, 65K05, 37N40, 46N10, 34A60, 62J07}

\section{Introduction}\label{sec:intro}
The lasso problem is a cornerstone to many high-dimensional applications in, e.g., statistics, machine learning, compressive sensing, and inverse problems~\cite{chen2001atomic,tibshirani1996regression,nikolova2002minimizers,vossen2006l1,stadler2009elliptic}. Also known as basis pursuit denoising, the (constrained) lasso problem is given by\begin{equation}\label{eq:lasso_primal}\tag{LASSO}
\min_{\bx \in \Rn} \left\{\normone{\bx} + \frac{1}{2t}\normsq{\matr{A}\bx - \bb}\right\},
\end{equation}
where $\matr{A}$ is a real $m \times n$ matrix with $m \leqslant n$, $\bb \in \Rm$ is the observed data, and $t \geqslant 0$ is a hyperparameter controlling the trade-off between sparsity and data fidelity. The limit $t \to 0$ yields the limiting problem known in signal processing as basis pursuit:
\begin{equation}\label{eq:BP_primal}\tag{BP}
\min_{\bx \in \Rn} \normone{\bx} \quad \text{subject to} \quad \matr{A}\bx = \bb,
\end{equation}
The interpretation of $\matr{A}$ and $\bb$ depends on the application, while the hyperparameter $t$ is either preselected or estimated with data-driven methods such as cross-validation.

Estimating the appropriate hyperparameter can prove challenging in practice. The standard approach is to construct a \emph{regularization path}. To do so, one first selects a sequence $\{t^{(k)}\}_{k=0}^{K}$, one then computes the corresponding solutions $\{\bx^{s}(t^{(k)},\bb)\}_{k=0}^{K}$ to~\eqref{eq:lasso_primal}, and one finally chooses the hyperparameter that gives the preferred solution. Different methods differ in how they select the hyperparameters and compute the solutions, but in any case, constructing a regularization path entails solving~\eqref{eq:lasso_primal} accurately for many hyperparameters. This approach can therefore become time-consuming and computationally intensive when the dimensions $m$ and $n$ are high. This issue has stimulated significant research to develop efficient and accurate algorithms for solving~\eqref{eq:lasso_primal} in high dimensions~\cite{osborne2000new,osborne2000lasso,efron2004least,daubechies2004iterative,friedman2007pathwise,donoho2008fast,beck2009fast,bringmann2018homotopy}.

While many algorithms have been proposed for the lasso, they invariably suffer from drawbacks and must either favor efficiency over accuracy or vice versa. State-of-the-art algorithms therefore remain ineffective for high-dimensional applications requiring accurate solutions in a reasonable amount of computational time. We address this issue in this work and present a fast algorithm for solving the lasso problem exactly using differential inclusions. Our analysis and results yield Algorithm~\ref{alg:lasso}. It computes an exact pair of primal and dual solutions to the lasso problem, numerically up to machine precision, is amenable to computing regularization paths, and is very fast.

\paragraph{Contributions of this paper:} \textbf{(i)} We prove that a selection principle from the theory of differential inclusions turns the dual lasso problem into calculating the trajectory of an \emph{integrable} projected dynamical system, which we then calculate exactly. Our main results, which are presented in Section~\ref{sec:results}, culminate in Algorithm~\ref{alg:lasso}, an exact algorithm that computes the optimal primal and dual lasso solutions for any $t\geqslant0$ without homotopy. As a by-product, our results provide a novel solution method for solving a broad class of projected dynamical systems, which should find relevance to applications outside the scope of this work. \textbf{(ii)} We present in Section~\ref{sec:perturbations} a detailed continuation analysis of solutions of the projected dynamical system in terms of the hyperparameter $t$, thereby yielding a rigorous, generalized homotopy algorithm for the lasso problem. \textbf{(iii)} Our numerical experiments show that Algorithm~\ref{alg:lasso} vastly outperforms the state of the arts in accuracy while also achieving the best overall performance, highlighting a key feature of Algorithm~\ref{alg:lasso}: it neither compromises accuracy nor computational efficiency.

\paragraph{Related work:} State-of-the-art algorithms for the lasso are divided roughly in three categories: coordinate descent methods~\cite{friedman2007pathwise,wu2008coordinate,friedman2010regularization,tibshirani2012strong}, first-order optimization algorithms such as FISTA or the PDHG~\cite{glowinski1975approximation,yin2008bregman,beck2009fast,chambolle2011first,chambolle2016introduction}, and homotopy algorithms~\cite{osborne2000new,osborne2000lasso,efron2004least,donoho2008fast,tibshirani2013lasso,bringmann2018homotopy}. We will focus only on these algorithms; other algorithms include the iteratively reweighted least squares algorithm~\cite{daubechies2004iterative}, Bayesian methods~\cite{park2008bayesian}, adaptive inverse scale space methods~\cite{burger2013adaptive}, specialized (quasi-) Newton methods~\cite{karimi2017imro}, fixed-point continuation methods~\cite{hale2008fixed}, and interior point methods~\cite{boyd2004convex}. See~\cite{zhao2023survey,bertsimas2020,li2020survey} for surveys and comparisons of different approaches. Coordinate descent methods are the state of the arts because they are fast. Key to their efficiency are so-called selection rules or heuristics~\cite{tibshirani2012strong,ghaoui2010safe,raj2016screening} that estimate, \textit{a priori}, the degree of sparsity of primal solutions. However, these methods suffer from algorithmic instability~\cite{barber2021stability} and may therefore produce inaccurate numerical solutions. First-order optimization algorithms, in contrast, are numerically accurate but less efficient in high dimensions. Finally, homotopy algorithms compute exact solutions paths to~\eqref{eq:lasso_primal} but generally require technical assumptions to work, e.g., the uniqueness of the path and the ``one-at-a-time condition"~\cite{tibshirani2013lasso,efron2004least,osborne2000new}. These assumptions are difficult to verify and may not hold in practice. For example, the LARS algorithm fails to converge on simple examples~\cite[Proposition 4.1]{bringmann2018homotopy}. Moreover, all homotopy algorithms have exponential worst-case complexity~\cite{mairal2012complexity}, and while in practice they often converge fast, they are generally less efficient than coordinate descent methods.

\begin{algorithm}[ht]
    \caption{Algorithm for computing a pair of primal and dual lasso solutions.}
    \label{alg:lasso}
    \Input{A matrix $\matr{A} \in \mathbb{R}^{m \times n}$, a vector $\bb \in \image{\matr{A}}\setminus{\{\boldsymbol{0}\}}$ and a number $t\geqslant 0$.}
    \Output{A pair of primal and dual solutions $(\bx^{s}(t,\bb),\bp^{s}(t,\bb)) \in \Rn \times \Rm$ to the lasso problem.}
    \BlankLine
    \emph{Set $\bp^{(0)} \in \Rm$ such that $\norminf{\Atop\bp^{(0)}} = 1$}\;
    \For{$k = 1$ until convergence} {
    	Compute $\eqset^{(k-1)} = \{j \in \{1,\dots,n\} : |\langle-\Atop\bp^{(k-1)},\be_{j}\rangle| = 1\}$\;
    	Compute $\matr{D}^{(k-1)} = \diag\left(\sign{(-\Atop\bp^{(k-1)})}\right)$\;
        Compute $\hat{\bu}^{(k-1)} \in \argmin_{\substack{\bu_{\eqset^{(k-1)}} \geqslant \boldsymbol{0} \\ \bu_{{(\mathcal{E}^{(k-1)})^\mathsf{C}}} = \boldsymbol{0}} } \normsq{\matr{A}\matr{D}^{(k-1)}\bu - \bb - t\bp^{(k-1)}}$ \;
        Compute the descent direction $\bd^{(k-1)} = \matr{A}\matr{D}^{(k-1)}\hat{\bu}^{(k-1)} - \bb - t\bp^{(k-1)}$\;
        Compute the maximal descent time
        \[
        \Delta^{(k-1)} = \min_{j \in \{1,\dots,n\}} \left\{\frac{\sign{\left\langle \matr{D}^{(k-1)}\Atop\bd^{(k-1)},\be_j\right\rangle} - \left\langle \matr{D}^{(k-1)}\Atop\bp^{(k-1)},\be_j\right\rangle)}{\left\langle\matr{D}^{(k-1)} \Atop\bd^{(k-1)},\be_j\right\rangle} \right\}\;
        \]
        
        \uIf{$t>0$ and $t\Delta^{(k-1)} \geqslant 1$}{ 
            Set $\bx^{s}(t,\bb) = \matr{D}^{(k-1)}\hat{\bu}^{(k-1)}$; \\
            Set $\bp^{s}(t,\bb) = \bp^{(k-1)} + \bd^{(k-1)}/t$; \\
            break; \tcp*[h]{The algorithm has converged}
        }
        \uElseIf{$t = 0$ and $\bd^{(k-1)} = \boldsymbol{0}$}{
            Set $\bx^{s}(t,\bb) = \matr{D}^{(k-1)}\hat{\bu}^{(k-1)}$; \\
            Set $\bp^{s}(t,\bb) = \bp^{(k-1)}$; \\
        	break; \tcp*[h]{The algorithm has converged}
        }
        Update $\bp^{(k)} = \bp^{(k-1)} + \Delta^{(k-1)}\bd^{(k-1)}$\;
    }
\end{algorithm}

\paragraph{Organization of this paper:} We review in Section~\ref{sec:setup} the existence of solutions and optimality conditions to the lasso problem and introduce a characterization in terms of differential inclusions. In Section~\ref{sec:msp}, we present the minimal selection principle and use it to cast the dual lasso problem into the equivalent problem of computing the trajectory of a projected dynamical system. Section~\ref{sec:results} characterizes the projected dynamical system and shows that it can be integrated exactly, and in particular that its trajectory and asymptotic limit can be computed explicitly. This gives the optimal solution to the dual lasso problem when $t>0$ (an optimal solution when $t=0$) and recover an optimal primal solution. We present in Section~\ref{sec:perturbations} a detailed continuation analysis of the asymptotic limit of the projected dynamical system, i.e., solutions to the lasso problem, in terms of the hyperparameter $t$, yielding a generalized homotopy algorithm. We present numerical experiments in Section~\ref{sec:numerics} to compare our algorithms to some state-of-the-art algorithms for the lasso problem, and finally we discuss the broader implications of our work in Section~\ref{sec:discussion}. 
\section{Setup}\label{sec:setup}
Unless stated otherwise, we assume $\bb \in \text{Im}(\matr{A})\setminus\{\boldsymbol{0}\}$ and $\rank{\matr{A}} = m$. The case $\bb = \boldsymbol{0}$ is uninteresting and we can assume the latter because, otherwise, at least one row of the matrix $\matr{A}$ is linearly dependent on the others and can be discarded. 

We wish to note that some analyses and proofs in this paper are fairly involved and use concepts (e.g., from convex analysis, functional analysis and dynamical systems) that may be unfamiliar to the reader. We refer the reader to Appendix~\ref{app:A} for more detailed mathematical background.


\subsection{Existence of solutions and optimality conditions to the lasso problem}\label{subsec:reviewI}
We review here existence and optimality results using classic results from convex analysis, summarized as Theorem~\ref{thm:primal_dual_prob} in Appendix~\ref{app:A}. Following the notation of Theorem~\ref{thm:primal_dual_prob}, we set 
\[
f_{1}(\cdot) = \normone{\cdot} \quad \text{and} \quad f_{2}(\cdot) = \begin{cases}
\frac{1}{2t}\normsq{\cdot-\bb}\, &\text{if } t > 0 \\
\chi_{\{b\}}(\cdot)\, &\text{if } t = 0
\end{cases}
\]
where $\chi_{\bb}(\cdot)$ denotes the characteristic function of the singleton set $\{\bb\}$. Since $\dom f_{1} = \Rn$, $\bb \in \ri \, \dom f_{2}$ and $\bb \in \image{\matr{A}}$, there exists $\bx \in \ri \, \dom f_{1}$ such that $\matr{A}\bx \in \ri \, \dom f_{2}$. Moreover, the function $\bx \mapsto f_{1}(\bx) + f_{2}(\matr{A}\bx)$ is coercive because $f_{1}$ is coercive and $f_{2}$ is nonnegative. Using Remark~\ref{rem:coercivity}, we conclude that all conditions of Theorem~\ref{thm:primal_dual_prob} are satisfied. As a consequence,~\eqref{eq:lasso_primal} and~\eqref{eq:BP_primal} both have at least one solution. The dual lasso problem is given by
\begin{equation}\label{eq:lasso_dual}\tag{dLASSO}
    -\inf_{\bp \in \Rm} \left\{\frac{t}{2}\normsq{\bp} + \langle\bp,\bb\rangle + \chi_{\Binf}(-\Atop\bp)\right\}
\end{equation}
and the limit $t\to 0$ yields the limiting problem dual basis pursuit problem:
\begin{equation}\label{eq:BP_dual}\tag{dBP}
    -\inf_{\bp \in \Rm} \left\{\langle\bp,\bb\rangle + \chi_{\Binf}(-\Atop\bp)\right\}.
\end{equation}
Here, $\chi_{B_{\infty}}(\cdot)$ denotes the characteristic function of the unit $\ell_{\infty}$-ball:
\begin{equation}\label{eq:charfunl1}
\chi_{\Binf}(\bs) = \begin{cases}
0\, &\, \text{if } |\langle \bs,\be_{j}\rangle| \leqslant 1 \,\text{for every}\, j\in\{1,\dots,n\},\\
+\infty\, &\, \text{otherwise}.
\end{cases}
\end{equation}
Problem~\eqref{eq:lasso_dual} has a unique solution for every $t>0$ due to strong convexity and problem~\eqref{eq:BP_dual} has at least one solution. Moreover,~\eqref{eq:lasso_dual} and~\eqref{eq:BP_dual} are equal in value to their respective primal problems~\eqref{eq:lasso_primal} and~\eqref{eq:BP_primal}. Finally, letting $(\bxsol(t,\bb),\bpsol(t,\bb))$ denote any pair of solutions to the lasso problem and its dual, we have the set of equivalent first-order optimality conditions:
\begin{subequations}\label{eq:bpdn_all_oc}
\begin{align}
-t\bpsol(t,\bb) &= \bb - \matr{A}\bxsol(t,\bb) \quad \text{and} \ -\matr{A}^{\top}\bpsol(t,\bb) \in \partial\normone{\cdot}(\bxsol(t,\bb)), \label{eq:bpdn_mixed_oc} \\
-t\bpsol(t,\bb) &\in  \bb - \matr{A}\partial\chi_{\Binf}(-\matr{A}^{\top}\bpsol(t,\bb)),\label{eq:lasso_dual_oc}
\end{align}
\end{subequations}
where $\lim_{t\to 0} t\bpsol(t,\bb) = 0$, with $\matr{A}\bxsol(0,\bb) = \bb$ and $-\matr{A}^{\top}\bp^{s}(0,\bb) \in \partial \normone{\cdot}(\bxsol(0,\bb))$.

\subsection{Structure of the optimality conditions}\label{subsec:reviewII}
The optimality conditions in~\eqref{eq:bpdn_mixed_oc} on the right identify the components of $-\matr{A}^{\top}\bpsol(t,\bb)$ achieving maximum absolute deviation:
\begin{equation}\label{eq:equicorrelation_char}
    \langle-\matr{A}^{\top}\bpsol(t,\bb),\be_{j}\rangle =
    \begin{cases}
    1\, &\, \text{if } x_{j}^{s}(t,\bb) > 0 \\
    -1\, &\, \text{if } x_{j}^{s}(t,\bb) < 0 \\
    [-1,1]\, &\, \text{if } x_{j}^{s}(t,\bb) = 0.
    \end{cases}
\end{equation}
In particular, $|\langle-\matr{A}^{\top}\bpsol(t,\bb),\be_{j}\rangle| < 1 \implies x_{j}^{s}(t,\bb) = 0$. It will therefore be useful to identify for any $\bp \in \Rm$ the set of indices $j \in \{1,\dots,n\}$ for which $|\langle-\matr{A}^{\top}\bp,\be_{j}\rangle| = 1$. This set is called the \emph{equicorrelation set} at $\bp$ and we will denote it by $\mathcal{E}(\bp)$:
\begin{equation}\label{eq:equicorrelation_set}
    \eqset(\bp) \coloneqq \left\{j \in \{1,\dots,n\} : |\langle-\matr{A}^{\top}\bp,\be_{j}\rangle| = 1 \right\}.
\end{equation}
In addition, we will keep track of the signs and define the $n \times n$ diagonal matrix of signs
\begin{equation}\label{eq:matrix_signs}
\matr{D}(\bp) = \diag\left(\sign{(-\Atop\bp)}\right).
\end{equation}
The equicorrelation set identifies the active constraints of the cone~\eqref{eq:lasso_dual_oc}. To see this, observe the $\ell_{\infty}$ unit ball is a closed convex polyhedron, and so Proposition~\eqref{prop:subdiff_ccp} implies
\[
\bb - \matr{A}\partial\chi_{\Binf}(-\matr{A}^{\top}\bp) = \left\{\bb - \sum_{j \in \ep} u_{j} \matr{A}\matr{D}(\bp)\be_{j} : u_{j} \geqslant 0\right\}
\]
for all $\bp \in \dom V(\cdot;t,\bb)$. The optimality conditions~\eqref{eq:lasso_dual_oc} therefore read
\begin{equation}\label{eq:bpdn_full_oc}
-t\bpsol(t,\bb) \in \left\{\bb - \sum_{j \in \mathcal{E}(\bpsol(t,\bb))} u_{j} \matr{A}\matr{D}(\bpsol(t,\bb))\be_{j} : u_{j} \geqslant 0\right\}.
\end{equation}

\subsection{Characterization via differential inclusions}\label{differential inclusions}
We now present a characterization of solutions to the dual problem~\eqref{eq:lasso_dual} in terms of an initial value problem involving differential inclusions. Let $V \colon \Rm \times [0,+\infty) \times \Rm \to \R \cup \{+\infty\}$ denote the objective function in~\eqref{eq:lasso_dual}, parameterized in terms of $t$ and $\bb$:
\begin{equation}\label{eq:BPDN_potential}
    V(\bp;t,\bb) = \frac{t}{2}\normsq{\bp} + \langle\bp,\bb\rangle + \chi_{\Binf}(-\Atop\bp).
\end{equation}
Its subdifferential with respect to $\bp$ is the convex cone 
\begin{equation}\label{eq:BPDN_subdiff_potential}
    \partial_{\bp} V(\bp;t,\bb) = \left\{t\bp + \bb - \sum_{j \in \mathcal{E}(\bp)} u_{j} \matr{A}\matr{D}(\bp)\be_{j} : u_{j} \geqslant 0\right\}.
\end{equation}
(See Proposition~\ref{prop:subdiff_ccp} for details.) We suggest to compute a solution to the dual problem using a nonsmooth generalization of gradient descent on $\bp \mapsto V(\bp;t,\bb)$:
\begin{equation}\label{eq:V_diff_inclusion}
    \dot{\bp}(\tau) \in - \partial_{\bp} V(\bp(\tau);t,\bb), \qquad \bp(0) = \bp_{0} \in \dom \partial_{\bp} V(\cdot; t,\bb).
\end{equation}

Differential inclusions generalize the concept of ordinary differential equations to multi-valued \emph{maximal monotone mappings}~\cite{aubin2012differential,brezis1973ope}. As a special case, these mappings include subdifferentials of proper, lower semicontinuous and convex functions~\cite[Theorem 12.17]{rockafellar2009variational}. Their corresponding differential inclusions are called \emph{gradient inclusions}~\cite{broer2010handbook} because they generalize the classical concept of gradient systems. Indeed, similarly to how trajectories of gradient systems converge to their critical points (if any exist), trajectories of gradient inclusions converge to their critical points (if any exist). This fact follows from a variational principle called the \emph{minimal selection principle}.
\section{The minimal selection principle}\label{sec:msp}
The minimal selection principle stipulates that solutions to gradient inclusions exist, are unique, and evolve in the steepest descent direction~\cite[Chapter 3]{aubin2012differential}. As the minimal selection principle is central to this work, we state it below in full:
\begin{thm}[Existence and uniqueness of solutions to gradient inclusions]\label{thm:eu_diff_inclusions} Let $g\colon \Rm \to \R\cup\{+\infty\}$ be a proper, lower semicontinuous, and convex function and let $\bp_{0} \in \dom \partial g$. Consider the gradient inclusions
\begin{equation}\label{eq:general_diff_inclusion}
    \dot{\bp}(\tau) \in - \partial g(\bp(\tau)), \qquad \bp(0) = \bp_{0}.
\end{equation}
Then, there exists a unique solution $\bp \colon [0,+\infty) \mapsto \dom \partial g$ satisfying~\eqref{eq:general_diff_inclusion}. Moreover:
\begin{itemize}
    \item[(i)] The function $\tau \mapsto \dot{\bp}(\tau)$ is right-continuous and the function $\tau \mapsto \normtwo{\dot{\bp}(\tau)}$ is nonincreasing.
    
    \item[(ii)] If $g$ achieves its minimum at some point, then $\bp(\cdot)$ converges to such a point:
    \[
    \lim_{\tau \to +\infty} g(\bp(\tau)) = \min_{\bp \in \Rm} g(\bp) \quad \text{and} \quad \lim_{\tau \to +\infty} \bp(\tau) \in \argmin_{\bp \in \Rm} g(\bp).
    \]
    \item[(iii)] (The minimal selection principle.) The function $\bp(\cdot)$ satisfies the initial value problem
    \begin{equation}\label{eq:general_slow_ode}
            \dot{\bp}(\tau) =-\proj_{\partial g(\bp(\tau))}(\boldsymbol{0}), \qquad \bp(0) = \bp_{0}
    \end{equation}
    at $\tau = 0$ and almost everywhere on $(0,+\infty)$. 
\end{itemize}
\end{thm}
\begin{proof}
    See~\cite[Theorem 1, page 147 and Theorem 1, page 159]{aubin2012differential} for the proofs of (i)--(iii).
\end{proof}

\begin{rem}
In the language of dynamical systems theory, the initial value problem~\eqref{eq:general_slow_ode} is called a \emph{projected dynamical system}~\cite{dupuis1993dynamical}. Such systems arise naturally in the theory of variational inequalities~\cite[Chapter 17]{attouch2014variational}, as was first noted in~\cite{dupuis1991lipschitz}.
\end{rem}


\subsection{The minimal selection principle for the lasso problem}\label{subsec:selection_principle}

In the context of this work, the minimal selection principle implies the system of gradient inclusions~\eqref{eq:V_diff_inclusion} has a unique, global solution satisfying the initial value problem
\begin{equation}\label{eq:bp_slow_ode}
    \dot{\bp}(\tau) =-\proj_{\partial_{\bp} V(\bp(\tau);t,\bb)}(\boldsymbol{0}), \qquad \bp(0) \in \dom \partial_{\bp} V(\cdot;t,\bb)
\end{equation}
at $\tau = 0$ and almost everywhere on $(0,+\infty)$, where
\[
\proj_{\partial_{\bp} V(\bp(\tau);t,\bb)}(\boldsymbol{0}) = \argmin_{\bs \in \Rm} \normsq{\bs} \ \text{such that} \ \bs \in \partial_{\bp}V(\bp(\tau);t,\bb).
\]
Moreover, the solution converges asymptotically to the unique solution of~\eqref{eq:lasso_dual} when $t>0$ (a solution of~\eqref{eq:BP_dual} when $t=0$). The projected dynamical system~\eqref{eq:bp_slow_ode} is called the \emph{slow system} and its solution the \emph{slow solution}. In addition, we will write
\begin{equation}\label{eq:steepest_dd}
\bd(\bp(\tau);t,\bb) \coloneqq -\proj_{\partial_{\bp} V(\bp(\tau);t,\bb)}(\boldsymbol{0})
\end{equation}
to denote the direction of change of the slow system at $\bp(\tau)$.

The minimal selection principle suggests we can compute a solution to~\eqref{eq:lasso_dual} or~\eqref{eq:BP_dual} by calculating the asymptotic limit of the slow system~\eqref{eq:bp_slow_ode}. To do this, we must compute the minimal selection $\proj_{\partial_{\bp} V(\bpo;t,\bb)}(\boldsymbol{0})$ for any $\bpo \in \dom \partial_{\bp}V(\cdot,t,\bb)$. We turn to this problem next.


\subsection{Computing the minimal selection}\label{subsec:computing_ms}
Here, we describe how the minimal selection $\proj_{\partial_{\bp} V(\bpo;t,\bb)}(\boldsymbol{0}) \equiv -\bd(\bpo;t,\bb)$ of the slow system~\eqref{eq:bp_slow_ode} is computed from a cone projection problem or, equivalently, a nonnegative least-squares (NNLS) problem. To state this succinctly, we will use submatrix notation: Given $\bp \in \dom V$, we denote by $\matr{A}_{\ep}$ the $m \times |\ep|$ the submatrix of $\matr{A}$ with columns indexed by $\ep$, we write $\matr{A}^{\top}_{\ep}$ to denote its transpose, and we denote by $\matr{D}_{\ep}$ the $|\ep| \times |\ep|$ submatrix of signs $\matr{D}(\bp)$ with rows and columns indexed by $\ep$. Finally, we denote by $\bu_{\ep}$ the subvector of $\bu \in \Rn$ indexed by $\ep$. With the notation set, we have the following:

\begin{lem}\label{lem:projection}
Let $t\geqslant 0$ and $\bpo \in \dom \partial_{\bp} V(\cdot;t,\bb)$. The direction of change $\bd(\bpo;t,\bb)$ of the slow system~\eqref{eq:bp_slow_ode} is the unique solution to the cone projection problem
\begin{equation}\label{eq:descent_cone}
\bd(\bpo;t,\bb) = \argmin_{\bd \in \Rm} \normsq{\bd + (\bb + t\bpo)} \ \text{subject to} \ \matr{D}_{\epo}\Atop_{\epo}\bd \geqslant \boldsymbol{0}.
\end{equation}
It admits the characterization
\begin{equation}\label{eq:proj_general}
\bd(\bpo;t,\bb) = \matr{A}_{\epo}\matr{D}_{\epo}\hat{\bu}_{\epo}(\bpo;t,\bb) - (\bb + t\bpo),
\end{equation}
where $\hat{\bu}(\bpo;t,\bb)$ is a solution to the NNLS problem
\begin{equation}\label{eq:nnls_problem}
\hat{\bu}(\bpo;t,\bb) \in \argmin_{\bu \in \Rn} \normsq{\matr{A}\matr{D}(\bpo)\bu - (\bb + t\bpo)} \ \text{subject to} \ 
\begin{cases}
&\bu_{\epo} \geqslant \boldsymbol{0}, \\
&\bu_{\ecpo} = \boldsymbol{0}.
\end{cases}
\end{equation}
Moreover:
\begin{subequations}
\begin{align}
&\hat{\bu}_{j}(\bpo;t,\bb)[\matr{D}(\bpo)\Atop\bd(\bpo;t,\bb)]_{j} = 0 \ \text{for every } j \in \{1,\dots,n\}, \label{eq:nnls_problem_oc_2} \\
&\normsq{\bd(\bpo;t,\bb)} + \langle \bb + t\bpo,\bd(\bpo;t,\bb)\rangle = 0. \label{eq:nnls_problem_oc_3}
\end{align}
\end{subequations}
\end{lem}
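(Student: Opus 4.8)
The plan is to recognize $\partial_{\bp}V(\bpo;t,\bb)$ as a translate of a finitely generated convex cone and then compute the projection of the origin onto it, transcribing the result into the cone projection problem via the Moreau decomposition. Using the explicit form \eqref{eq:BPDN_subdiff_potential} together with the identity $\sum_{j \in \epo} u_j \matr{A}\matr{D}(\bpo)\be_j = \matr{A}_{\epo}\matr{D}_{\epo}\bu_{\epo}$, I would first write
\[
\partial_{\bp}V(\bpo;t,\bb) = (\bb + t\bpo) - C, \qquad C \coloneqq \cone\{\matr{A}_{\epo}\matr{D}_{\epo}\be_j : j \in \epo\} = \{\matr{A}_{\epo}\matr{D}_{\epo}\bu_{\epo} : \bu_{\epo} \geqslant \boldsymbol{0}\}.
\]
Since $C$ is finitely generated it is a closed convex cone, so $\partial_{\bp}V(\bpo;t,\bb)$ is a nonempty closed convex set and $\bd(\bpo;t,\bb) = -\proj_{\partial_{\bp}V(\bpo;t,\bb)}(\boldsymbol{0})$ (the quantity in \eqref{eq:steepest_dd}) exists and is unique.

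Writing $\bz \coloneqq \bb + t\bpo$, translation invariance of projections gives $\proj_{\bz - C}(\boldsymbol{0}) = \bz - \proj_{C}(\bz)$, and projecting $\bz$ onto $C$ is by definition the minimization of $\normsq{\matr{A}_{\epo}\matr{D}_{\epo}\bu_{\epo} - \bz}$ over $\bu_{\epo} \geqslant \boldsymbol{0}$. Padding with $\bu_{\ecpo} = \boldsymbol{0}$ and using $\matr{A}\matr{D}(\bpo)\bu = \matr{A}_{\epo}\matr{D}_{\epo}\bu_{\epo}$ on this support, this is exactly the NNLS problem \eqref{eq:nnls_problem}, with $\proj_C(\bz) = \matr{A}_{\epo}\matr{D}_{\epo}\hat{\bu}_{\epo}$. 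Hence $\bd = -(\bz - \proj_C(\bz)) = \matr{A}_{\epo}\matr{D}_{\epo}\hat{\bu}_{\epo} - (\bb + t\bpo)$, which is \eqref{eq:proj_general}; note $\hat{\bu}$ need not be unique when $\matr{A}_{\epo}$ has dependent columns, but $\bd$ is.

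To reach the cone projection problem \eqref{eq:descent_cone} I would invoke Moreau's decomposition $\bz = \proj_C(\bz) + \proj_{C^{\circ}}(\bz)$, with $\langle \proj_C(\bz), \proj_{C^{\circ}}(\bz)\rangle = 0$ and $C^{\circ}$ the polar cone. This yields $\proj_{\partial_{\bp}V}(\boldsymbol{0}) = \bz - \proj_C(\bz) = \proj_{C^{\circ}}(\bz)$, so $\bd = -\proj_{C^{\circ}}(\bz) = \proj_{-C^{\circ}}(-\bz)$ by the odd symmetry $\proj_{-S}(-\bz) = -\proj_S(\bz)$. Computing the dual cone from the generators of $C$,
\[
-C^{\circ} = C^{*} = \{\bd : \langle \bd, \matr{A}_{\epo}\matr{D}_{\epo}\be_j\rangle \geqslant 0 \ \text{for all } j \in \epo\} = \{\bd : \matr{D}_{\epo}\Atop_{\epo}\bd \geqslant \boldsymbol{0}\},
\]
and $\proj_{C^{*}}(-\bz) = \argmin_{\matr{D}_{\epo}\Atop_{\epo}\bd \geqslant \boldsymbol{0}} \normsq{\bd + (\bb + t\bpo)}$, which is precisely \eqref{eq:descent_cone}; uniqueness is again projection onto a closed convex cone. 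I expect this polar-cone identification to be the crux of the argument, since it is what reconciles the a priori different minimization over $\partial_{\bp}V$ with the cone projection stated in the lemma; the rest is bookkeeping.

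Finally, the optimality conditions follow from the first-order conditions of \eqref{eq:nnls_problem}. For $j \in \epo$ the partial derivative of $\tfrac{1}{2}\normsq{\matr{A}\matr{D}(\bpo)\bu - \bz}$ in $u_j$, evaluated at $\hat{\bu}$, equals $\langle \matr{A}\matr{D}(\bpo)\be_j, \bd\rangle = [\matr{D}(\bpo)\Atop\bd]_j$, so complementary slackness gives $[\matr{D}(\bpo)\Atop\bd]_j \geqslant 0$ and $\hat{u}_j [\matr{D}(\bpo)\Atop\bd]_j = 0$; for $j \in \ecpo$ we have $\hat{u}_j = 0$, so the product vanishes for every $j$, establishing \eqref{eq:nnls_problem_oc_2}. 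For \eqref{eq:nnls_problem_oc_3}, I would then compute
\[
\normsq{\bd} + \langle \bb + t\bpo, \bd\rangle = \langle \bd, \bd + \bz\rangle = \langle \bd, \matr{A}_{\epo}\matr{D}_{\epo}\hat{\bu}_{\epo}\rangle = \sum_{j} \hat{u}_j [\matr{D}(\bpo)\Atop\bd]_j = 0,
\]
using \eqref{eq:nnls_problem_oc_2}; equivalently this is the Moreau orthogonality $\langle \proj_C(\bz), \proj_{C^{\circ}}(\bz)\rangle = \langle \bd + \bz, -\bd\rangle = 0$. The entire argument is uniform in $t \geqslant 0$, as the cone structure of $\partial_{\bp}V$ is unchanged at $t = 0$.
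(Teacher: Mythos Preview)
Your proof is correct and follows essentially the same approach as the paper's: both identify $\partial_{\bp}V(\bpo;t,\bb)$ as the translate $(\bb+t\bpo)-C$ of a finitely generated cone, obtain \eqref{eq:proj_general} from the NNLS projection onto $C$, relate it to the cone projection problem \eqref{eq:descent_cone} by duality, and deduce \eqref{eq:nnls_problem_oc_3} from the complementary slackness condition \eqref{eq:nnls_problem_oc_2}. The paper's proof is terser---it simply asserts that the NNLS problem and the cone projection problem are dual---whereas you make this explicit via the Moreau decomposition $\bz=\proj_C(\bz)+\proj_{C^\circ}(\bz)$ and the identification $C^*=-C^\circ$, which is a clean way to justify the equivalence and simultaneously recover the orthogonality behind \eqref{eq:nnls_problem_oc_3}.
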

\begin{proof} For every $t\geqslant 0$ and $\bpo \in \dom \partial_{\bp} V(\cdot;t,\bb)$, the subdifferential $\partial_{\bp} V(\bpo;t,\bb)$ is non-empty, closed and convex. Hence the projection of $\boldsymbol{0}$ onto $\partial_{\bp} V(\bpo;t,\bb)$ exists and is unique (see Definition~\ref{def:projection}). The projection is given precisely the NNLS problem~\eqref{eq:nnls_problem} or equivalently by its dual problem, the cone projection problem~\eqref{eq:descent_cone}. Equation~\eqref{eq:nnls_problem_oc_2} states the classical Karush-Kuhn-Tucker (KKT) conditions applied to the NNLS problem~\eqref{eq:nnls_problem}. To obtain equation~\eqref{eq:nnls_problem_oc_3}, we write
\[
\begin{alignedat}{1}
\langle\hat{\bu}(\bpo;t,\bb),\matr{D}(\bpo)\Atop\bd(\bpo;t,\bb)\rangle &= \langle\matr{A}\matr{D}(\bpo)\hat{\bu}(\bpo;t,\bb),\bd(\bpo;t,\bb)\rangle \\
&=\langle\matr{A}\matr{D}(\bpo)\hat{\bu}(\bpo;t,\bb) - \bb - t\bp,\bd(\bpo;t,\bb)\rangle \\
&\qquad\qquad + \langle\bb + t\bp,\bd(\bpo;t,\bb)\rangle \\
&= \normsq{\bd(\bpo;t,\bb)} + \langle\bb + t\bp,\bd(\bpo;t,\bb)\rangle,
\end{alignedat}
\]
where the last line follows by~\eqref{eq:proj_general}. Finally, combine~\eqref{eq:proj_general}	with~\eqref{eq:nnls_problem_oc_2} to deduce~\eqref{eq:nnls_problem_oc_3}.
\end{proof}
\begin{rem}\label{rem:computing_nnls}
Problem~\eqref{eq:nnls_problem} may have more than one optimal solution, but the direction $\bd(\bpo;t,\bb)$ is always unique. A review of cone projection algorithms can be found in~\cite{dimiccoli2016fundamentals}. Algorithms for NNLS problems include active set algorithms with ``finite-time" convergence, such as the Lawson--Hanson algorithm~\cite{lawson1995solving} and its generalization, Meyer's algorithm~\cite{meyer2013simple}.
\end{rem}

\section{Dynamics of the slow system}\label{sec:results}

\subsection{The minimal selection is a descent direction}\label{subsec:descent_direction}
Lemma~\ref{lem:projection} shows that the instantaneous direction $\bd(\bp(\tau);t,\bb)$ of the slow system~\eqref{eq:bp_slow_ode} can be calculated from a cone projection problem. The following proposition shows that this characterization implies $\bd(\bp(\tau);t,\bb)$ is a \emph{descent direction} for $V$ and, crucially, obeys an \emph{evolution rule}.
\begin{prop}\label{prop:descent_direction}
Let $t \geqslant 0$ and $\bpo \in \dom \partial_{\bp} V(\cdot;t,\bb)$. Then:
    \begin{itemize}
    \item[(i)] There exists $\Delta_{*}(\bpo;t,\bb) > 0$ such that $\bpo + \Delta\bd(\bpo;t,\bb) \in \dom V(\cdot;t,\bb)$ for every $\Delta \in [0,\Delta_{*}(\bpo;t,\bb)]$, where
    \begin{equation}\label{eq:descent_max_time}
    \Delta_{*}(\bpo;t,\bb) = \min_{j \in \{1,\dots,n\}} \left\{\frac{\sign{\left\langle \matr{D}(\bpo)\Atop\bd(\bpo;t,\bb),\be_j\right\rangle} - \left\langle \matr{D}(\bpo)\Atop\bpo,\be_j\right\rangle)}{\left\langle\matr{D}(\bpo) \Atop\bd(\bpo;t,\bb),\be_j\right\rangle} \right\}.
    \end{equation}
    Moreover, $\Delta_{*}(\bpo;t,\bb) = +\infty \iff \Atop\bd(\bpo;t,\bb) = \boldsymbol{0} \iff \bd(\bpo;t,\bb) = \boldsymbol{0}$.

    \item[(ii)](Descent direction) For every $\Delta \in [0, \Delta_{*}(\bpo;t,\bb)]$, we have
    \begin{equation}\label{eq:V_formula_descent}
    V(\bpo + \Delta\bd(\bpo;t,\bb);t,\bb) - V(\bpo;t,\bb) = -\Delta\left(1-t\Delta/2\right)\normsq{\bd(\bpo;t,\bb)}.
    \end{equation}
    In particular, if $\bd(\bpo;t,\bb) \neq \boldsymbol{0}$, then it is a descent direction of $V(\cdot;t,\bb)$.

    \item[(iii)] For every $\Delta \in [0, \Delta_{*}(\bpo;t,\bb)]$, we have the inclusion
    \[
    -(1-t \Delta)\bd(\bpo;t,\bb) \in \partial_{\bp} V(\bpo + \Delta\bd(\bpo;t,\bb);t,\bb).
    \]

    \item[(iv)] For every $\Delta \in [0,\Delta_{*}(\bpo;t,\bb))$, we have the inclusions
    \[
    \mathcal{E}(\bpo + \Delta\bd(\bpo;t,\bb)) \subset \epo
    \]
    and
    \[
    \partial_{\bp}V(\bpo + \Delta\bd(\bpo;t,\bb);t,\bb) \subset \{t\Delta\bd(\bpo;t,\bb)\} + \partial_{\bp}V(\bpo;t,\bb).
    \]

    \item[(v)] (Evolution rule) For every $\Delta \in [0,\Delta_{*}(\bpo;t,\bb))$, the following evolution rule holds:
    \begin{equation}\label{eq:identity_dd}
    \bd(\bpo + \Delta\bd(\bpo;t,\bb);t,\bb) = (1-t \Delta)\bd(\bpo;t,\bb).
    \end{equation}
\end{itemize}
\end{prop}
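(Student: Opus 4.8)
The plan is to handle the five parts in order, using throughout the coordinatewise description $\dom V(\cdot;t,\bb) = \{\bp \in \Rm : \norminf{\Atop\bp}\leqslant 1\}$ and the characterization of $\bd \coloneqq \bd(\bpo;t,\bb)$ from Lemma~\ref{lem:projection}. Writing $a_j \coloneqq \langle -\Atop\bpo,\be_j\rangle$ and $c_j \coloneqq \langle -\Atop\bd,\be_j\rangle$, the membership $\bpo + \Delta\bd \in \dom V$ is equivalent to $|a_j + \Delta c_j|\leqslant 1$ for every $j$. Two facts from Lemma~\ref{lem:projection} drive the argument: the feasibility constraint of~\eqref{eq:descent_cone} gives $a_j c_j \leqslant 0$ for $j \in \epo$ (where $|a_j| = 1$), and complementary slackness~\eqref{eq:nnls_problem_oc_2} forces $\hat{u}_j = 0$ whenever $j \in \epo$ and $c_j \neq 0$. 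Hence the support of $\hat{\bu}(\bpo;t,\bb)$ lies in the frozen set $S \coloneqq \{j \in \epo : c_j = 0\}$, on which $a_j + \Delta c_j = a_j$ for all $\Delta$, so that $S \subset \mathcal{E}(\bpo + \Delta\bd)$ and $\matr{D}(\bpo+\Delta\bd)\be_j = \matr{D}(\bpo)\be_j$ for $j\in S$. Establishing this support-localization and sign-freezing cleanly is, I expect, the main obstacle, as it is what makes (iii)--(v) work.

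For (i), I would read $|a_j + \Delta c_j|\leqslant 1$ coordinatewise: indices with $c_j = 0$ impose no constraint, while for $c_j \neq 0$ the bound first fails at $\Delta = (\sign(c_j) - a_j)/c_j$, which is strictly positive because $|a_j| < 1$ for $j\notin\epo$ and because $a_j c_j \leqslant 0$ (inward motion) for $j\in\epo$. Taking the minimum over $j$ yields $\Delta_*(\bpo;t,\bb) > 0$, and a short algebraic rewrite using $[\matr{D}(\bpo)\Atop\bd]_j = -\sign(a_j)c_j$ and $[\matr{D}(\bpo)\Atop\bpo]_j = -|a_j|$ turns $(\sign(c_j)-a_j)/c_j$ into the quotient in~\eqref{eq:descent_max_time} (the coordinates with $c_j=0$ contributing $+\infty$). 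The stated equivalences then follow: $\Delta_* = +\infty$ exactly when every $c_j = 0$, i.e. $\Atop\bd=\boldsymbol{0}$, and $\Atop\bd=\boldsymbol{0}\Rightarrow\bd=\boldsymbol{0}$ since $\Atop$ is injective ($\rank{\matr{A}} = m$).

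Part (ii) is a direct computation: for $\Delta\in[0,\Delta_*]$ both $\bpo$ and $\bpo+\Delta\bd$ lie in $\dom V$, so the indicator term cancels and $V(\bpo+\Delta\bd;t,\bb)-V(\bpo;t,\bb) = \Delta\langle\bb+t\bpo,\bd\rangle + \tfrac{t\Delta^2}{2}\normsq{\bd}$; substituting $\langle\bb+t\bpo,\bd\rangle = -\normsq{\bd}$ from~\eqref{eq:nnls_problem_oc_3} gives~\eqref{eq:V_formula_descent}, and the one-sided derivative $-\normsq{\bd}$ at $\Delta=0$ shows $\bd$ is a descent direction when $\bd\neq\boldsymbol{0}$. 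For (iii), I would begin from $-\bd = \proj_{\partialp V(\bpo;t,\bb)}(\boldsymbol{0}) = t\bpo+\bb-\sum_{j\in S}\hat{u}_j\matr{A}\matr{D}(\bpo)\be_j$ (by~\eqref{eq:proj_general} and the support localization), add $t\Delta\bd$ to both sides, and regroup $t(\bpo+\Delta\bd)+\bb$ to obtain $-(1-t\Delta)\bd = t(\bpo+\Delta\bd)+\bb-\sum_{j\in S}\hat{u}_j\matr{A}\matr{D}(\bpo)\be_j$; since $S\subset\mathcal{E}(\bpo+\Delta\bd)$ with matching signs, the right-hand side is a valid element of $\partialp V(\bpo+\Delta\bd;t,\bb)$ as described by~\eqref{eq:BPDN_subdiff_potential}, and this holds for all $\Delta\in[0,\Delta_*]$.

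Finally, for (iv) the inclusion $\mathcal{E}(\bpo+\Delta\bd)\subset\epo$ with $\Delta<\Delta_*$ follows because $j\notin\epo$ gives $|a_j|<1$, and $\Delta<\Delta_*$ keeps $|a_j+\Delta c_j|<1$; moreover any $j\in\mathcal{E}(\bpo+\Delta\bd)$ must satisfy $c_j=0$ (else it would remain strictly interior for $\Delta<\Delta_*$), so signs are preserved and the subdifferential inclusion $\partialp V(\bpo+\Delta\bd;t,\bb)\subset\{t\Delta\bd\}+\partialp V(\bpo;t,\bb)$ follows by rewriting each generator $\matr{A}\matr{D}(\bpo+\Delta\bd)\be_j = \matr{A}\matr{D}(\bpo)\be_j$ and splitting $t\Delta\bd$ off $t(\bpo+\Delta\bd)$. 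For the evolution rule (v), rather than arguing via a projection inequality (which appears to require $1-t\Delta\geqslant 0$), I would verify directly that $\bd' = (1-t\Delta)\bd$ solves the cone projection problem~\eqref{eq:descent_cone} at $\bpo+\Delta\bd$: feasibility is immediate since $\langle\Atop\bd,\be_j\rangle = -c_j = 0$ on $\mathcal{E}(\bpo+\Delta\bd)$, while the identity $(1-t\Delta)\bd + \bigl(\bb+t(\bpo+\Delta\bd)\bigr) = \bd + (\bb+t\bpo) = \matr{A}_{\epo}\matr{D}_{\epo}\hat{\bu}_{\epo}$ (using $(1-t\Delta)+t\Delta=1$ and~\eqref{eq:proj_general}) exhibits the residual as a nonnegative combination of active generators supported on $S$, supplying the KKT multipliers; complementary slackness holds because those generators sit at coordinates with $c_j=0$. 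Uniqueness of the cone projection then gives $\bd(\bpo+\Delta\bd;t,\bb) = (1-t\Delta)\bd$ for every $\Delta\in[0,\Delta_*)$, with no sign restriction on $1-t\Delta$.
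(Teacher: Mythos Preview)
Your proposal is correct. Parts (i), (ii), and the equicorrelation-inclusion half of (iv) follow the paper's argument essentially verbatim. Parts (iii), (v), and the subdifferential-inclusion half of (iv) take a genuinely different route.

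The key divergence is your support-localization observation: complementary slackness~\eqref{eq:nnls_problem_oc_2} forces $\hat{\bu}$ to be supported on the frozen set $S = \{j \in \epo : c_j = 0\}$, and for $\Delta \in (0,\Delta_*)$ you in fact establish $\mathcal{E}(\bpo+\Delta\bd) = S$. The paper never isolates $S$; it only records $\mathcal{E}(\bpo+\Delta\bd) \subset \epo$ and leaves the sign-matching implicit. With $S$ in hand, your proof of (iii) is a one-line algebraic regrouping of the explicit subgradient representation~\eqref{eq:proj_general}, whereas the paper argues abstractly via the $t$-strong-convexity subgradient inequality~\eqref{eq:sc_subdiff_char} combined with formula~\eqref{eq:V_formula_descent}. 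Similarly, for (v) you directly verify the KKT conditions of the cone projection~\eqref{eq:descent_cone} at $\bpo+\Delta\bd$ (stationarity via the identity $(1-t\Delta)\bd + (\bb+t(\bpo+\Delta\bd)) = \bd + (\bb+t\bpo)$, feasibility and complementary slackness both trivial on $S$), while the paper takes a more circuitous path: it derives the equality $\langle \bd, (1-t\Delta)\bd + \bs\rangle = 0$ from two opposing subgradient inequalities, then combines this with the projection characterization~\eqref{eq:char_projections} to squeeze out $\normsq{(1-t\Delta)\bd + \bs} \leqslant 0$. Your route is more constructive and avoids invoking strong convexity; the paper's route is more in the abstract convex-analytic style and would transfer more readily to settings where the subdifferential lacks such explicit polyhedral structure. (Incidentally, your parenthetical worry that a projection-inequality argument ``appears to require $1-t\Delta \geqslant 0$'' is unfounded: the paper's Part~5 argument goes through for either sign of $1-t\Delta$.)
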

\begin{proof}
See Appendix~\ref{app:proof_descent}.
\end{proof}

\subsection{Explicit local solution of the slow system}\label{subsec:local_solution}
The evolution rule~\eqref{eq:identity_dd} describes how the descent direction $\bd(\bpo;t,\bb)$ evolves \emph{locally} in $\Delta > 0$ along the line $\bpo + \Delta\bd(\bpo;t,\bb)$. This evolution is local because it holds on some possibly finite interval $\Delta \in [0,\Delta_{*}(\bpo;t,\bb))$, with $\Delta_{*}(\bpo;t,\bb)$ given by~\eqref{eq:descent_max_time}. Here, we use this evolution rule to show that the slow system evolves as that of its non-projected counterpart, in the sense that 
\[
\dot{\bp}(\tau) = -\proj_{\partial_{\bp} V(\bp(\tau);t,\bb)}(\boldsymbol{0}) \equiv -t(\bp(\tau) - \bpo) + \bd(\bpo;t,\bb) \ \text{ for small enough times $\tau > 0$.}
\]
Its local solution can therefore be computed explicitly, as the following Theorem makes precise:

\begin{thm}\label{thm:mr_local_sol}
Let $t \geqslant 0$ and $\bpo \in \dom \partial_{\bp} V(\cdot;t,\bb)$. The slow system~\eqref{eq:bp_slow_ode} with initial value $\bp(0) = \bpo$ coincides with the initial value problem
\begin{equation}\label{eq:dd_decay}
\dot{\bp}(\tau) = e^{-t\tau}\bd(\bpo;t,\bb), \quad \bp(0) = \bpo
\end{equation}
on $\tau \in [0,\tau_{*}(\bpo;t,\bb))$, where 
\begin{equation}\label{eq:thm_tau0}
\tau_{*}(\bpo;t,\bb) = 
\begin{cases}
    \Delta_{*}(\bpo;0,\bb) & \text{if} \ t = 0, \\
    -\ln\left(1-t\Delta_{*}(\bpo;t,\bb)\right)/t & \text{if} \ t > 0 \ \text{and} \ 1-t\Delta_{*}(\bpo;t,\bb) > 0, \\
    +\infty & \text{otherwise}.
\end{cases}
\end{equation}
In particular, the slow solution is given explicitly on $[0,\tau_{*}(\bpo;t,\bb))$ by
\begin{equation}\label{eq:slow_local_solution}
    \bp(\tau) = \bpo + f(\tau,t)\bd(\bpo;t,\bb),
\end{equation}
where 
\begin{equation}\label{eq:f_fun}
f(\tau,t) = \begin{cases}
    \tau & \text{if} \ t = 0,\\
    \left(1 - e^{-t\tau}\right)/t & \text{if} \ t > 0.
\end{cases}
\end{equation}
\end{thm}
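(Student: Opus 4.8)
The plan is to exploit the evolution rule of Proposition~\ref{prop:descent_direction}(v), which reduces the projected dynamical system to a scalar linear ODE along the ray emanating from $\bpo$ in the direction $\bd(\bpo;t,\bb)$. Concretely, I would posit the ansatz $\bp(\tau) = \bpo + f(\tau,t)\bd(\bpo;t,\bb)$ with $f$ as in~\eqref{eq:f_fun}, and verify directly that it solves~\eqref{eq:bp_slow_ode}. The two algebraic identities that drive everything are $\partial_{\tau} f(\tau,t) = e^{-t\tau}$ and $1 - t f(\tau,t) = e^{-t\tau}$, both immediate from~\eqref{eq:f_fun} under the convention $e^{0}=1$, which handles the cases $t=0$ and $t>0$ uniformly. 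Equivalently, $f(\cdot,t)$ is the unique solution of the scalar initial value problem $g'(\tau) = 1 - t g(\tau)$, $g(0)=0$, which is precisely the equation one obtains upon substituting the ansatz into the evolution rule.

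Before invoking the evolution rule I must check that the ansatz stays within its region of validity, i.e., that $f(\tau,t) \in [0,\Delta_{*}(\bpo;t,\bb))$ for every $\tau \in [0,\tau_{*}(\bpo;t,\bb))$. This bookkeeping is the technical core of the argument and amounts to verifying the three branches of~\eqref{eq:thm_tau0}. Since $\tau \mapsto f(\tau,t)$ is continuous, strictly increasing, and $f(0,t)=0$, it maps $[0,+\infty)$ onto $[0,+\infty)$ when $t=0$ and onto $[0,1/t)$ when $t>0$. When $t=0$ the constraint $f(\tau,0)=\tau < \Delta_{*}(\bpo;0,\bb)$ gives $\tau_{*} = \Delta_{*}(\bpo;0,\bb)$. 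When $t>0$ and $1 - t\Delta_{*}(\bpo;t,\bb) > 0$ (so $\Delta_{*} < 1/t$), solving $f(\tau,t) < \Delta_{*}$ for $\tau$ yields $\tau < -\ln(1-t\Delta_{*})/t$; and when $1 - t\Delta_{*} \leqslant 0$ (so $\Delta_{*} \geqslant 1/t$, including $\Delta_{*}=+\infty$ when $\bd(\bpo;t,\bb)=\boldsymbol{0}$ by Proposition~\ref{prop:descent_direction}(i)), the bound $f(\tau,t) < 1/t \leqslant \Delta_{*}$ holds for all $\tau$, giving $\tau_{*}=+\infty$. This reproduces~\eqref{eq:thm_tau0} exactly.

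With the range condition established, I would feed the ansatz into the evolution rule: for each $\tau \in [0,\tau_{*})$, setting $\Delta = f(\tau,t) \in [0,\Delta_{*})$, Proposition~\ref{prop:descent_direction}(v) gives $\bd(\bp(\tau);t,\bb) = (1 - t f(\tau,t))\bd(\bpo;t,\bb) = e^{-t\tau}\bd(\bpo;t,\bb)$, while differentiating the ansatz gives $\tfrac{d\bp}{d\tau}(\tau) = \partial_{\tau} f(\tau,t)\,\bd(\bpo;t,\bb) = e^{-t\tau}\bd(\bpo;t,\bb)$. Hence $\tfrac{d\bp}{d\tau}(\tau) = \bd(\bp(\tau);t,\bb) = -\proj_{\partial_{\bp}V(\bp(\tau);t,\bb)}(\boldsymbol{0})$ holds for \emph{every} $\tau \in [0,\tau_{*})$, which is exactly~\eqref{eq:dd_decay} and the slow system~\eqref{eq:bp_slow_ode}. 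Proposition~\ref{prop:descent_direction}(i) and (iii) simultaneously guarantee $\bp(\tau) \in \dom \partial_{\bp} V(\cdot;t,\bb)$ throughout, so the right-hand side is well defined.

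It remains to identify this explicit curve with the slow solution. Since the constructed $\bp(\cdot)$ is continuously differentiable on $[0,\tau_{*})$, takes the correct initial value $\bp(0)=\bpo$, and satisfies the governing differential inclusion~\eqref{eq:V_diff_inclusion} — indeed $-\tfrac{d\bp}{d\tau}(\tau) = \proj_{\partial_{\bp}V(\bp(\tau);t,\bb)}(\boldsymbol{0}) \in \partial_{\bp}V(\bp(\tau);t,\bb)$ — the uniqueness assertion of Theorem~\ref{thm:eu_diff_inclusions} forces it to coincide with the slow solution on $[0,\tau_{*})$. I expect the main obstacle to be conceptual rather than computational, and it has already been discharged upstream: the genuinely hard content is the evolution rule of Proposition~\ref{prop:descent_direction}(v), after which this theorem is essentially a matter of solving a one-dimensional ODE and tracking its interval of validity. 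The only residual care needed is the case analysis for $\tau_{*}$ above and the invocation of uniqueness; should one wish to avoid appealing to the global uniqueness statement, the same identification follows from the local contraction estimate $\tfrac{d}{d\tau}\normsq{\bp_{1}(\tau)-\bp_{2}(\tau)} \leqslant 0$ that monotonicity of $\partial_{\bp}V(\cdot;t,\bb)$ yields for any two solutions sharing an initial value.
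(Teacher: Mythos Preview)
Your proposal is correct and follows essentially the same approach as the paper: posit the curve $\bpo + f(\tau,t)\bd(\bpo;t,\bb)$, verify via the evolution rule of Proposition~\ref{prop:descent_direction}(v) that it satisfies the slow system on $[0,\tau_{*})$ with the case analysis for $\tau_{*}$ exactly as you wrote it, and conclude by uniqueness from Theorem~\ref{thm:eu_diff_inclusions}. The paper additionally motivates the ansatz by first writing down the affine system $\frac{d\bpa}{d\tau} = -t(\bpa-\bpo)+\bd(\bpo;t,\bb)$ and solving it, but this is cosmetic; your direct verification is equivalent and arguably cleaner.
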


\begin{proof}
We will use the evolution rule~\eqref{eq:identity_dd} to show that the affine system
\begin{equation}\label{eq:affine_system}
    \dot{\bp}_{a}(\tau) =-t(\bpa(\tau) - \bpo) + \bd(\bpo;t,\bb), \qquad \bpa(0) = \bpo.
\end{equation}
satisfies the slow system~\eqref{eq:bp_slow_ode} on $[0,\tau_{*}(\bpo;t,\bb))$ and conclude using uniqueness. First, consider the affine system~\eqref{eq:affine_system}. A short calculation shows that its unique, global solution is given by 
\[
\bpa(\tau) = \bpo + f(\tau,t)\bd(\bpo;t,\bb).
\]
Substitute the solution in~\eqref{eq:affine_system} above to find
\begin{equation}\label{eq:explicit_slow_evolution}
\begin{alignedat}{1}
\dot{\bp}_{a}(\tau) &= (1-tf(\tau,t))\bd(\bpo;t,\bb) \\
&= e^{-t\tau}\bd(\bpo;t,\bb).
\end{alignedat}
\end{equation}
Next, we invoke the evolution rule~\eqref{eq:identity_dd} in Proposition~\ref{prop:descent_direction}(v) with $\Delta = f(\tau,t)$ to find
\begin{equation}\label{eq:thm_local_id_holds}
-\proj_{\partial_{\bp} V(\bpo + f(\tau,t)\bd(\bpo;t,\bb),t)}(\boldsymbol{0}) = (1-tf(\tau,t))\bd(\bpo;t,\bb)
\end{equation}
whenever $0 \leqslant f(\tau,t) < \Delta_{*}(\bpo;t,\bb)$. Notice $\tau \mapsto f(\tau,t)$ increases monotonically from $0$ to $1/t$ (0 to $+\infty$) when $t > 0$ ($t = 0$). Hence if $t = 0$, the largest value of $\tau$ for which~\eqref{eq:thm_local_id_holds} holds is $\Delta_{*}(\bpo;0,\bb)$. If $t > 0$ and $1-t\Delta_{*}(\bpo;t,\bb) > 0$, then $(1-e^{-t\tau})/t$ is equal to $\Delta_{*}(\bpo;t,\bb)$ at $\tau = -\ln\left(1-t\Delta_{*}(\bpo;t,\bb)\right)/t$. If $t > 0$ and $1 - t\Delta_{*}(\bpo;t,\bb)) \leqslant 0$, then~\eqref{eq:thm_local_id_holds} holds for every $\tau \geqslant 0$. Taken together, we find 
\[
\dot{\bp}_{a}(\tau) = -\proj_{\partial_{\bp} V(\bpo + f(\tau,t)\bd(\bpo;t,\bb),t)}(\boldsymbol{0}) \ \text{for every $\tau \in [0,\tau_{*}(\bpo;t,\bb))$}.
\]
Uniqueness follows from Theorem~\ref{thm:eu_diff_inclusions}, and hence~\eqref{eq:dd_decay} follows from~\eqref{eq:explicit_slow_evolution}.
\end{proof}

\subsection{Continuation of the local solution}\label{subsec:continuation}
Theorem~\ref{thm:mr_local_sol} provides the explicit local solution of the slow system~\eqref{eq:bp_slow_ode} on the interval $[0,\tau_{*}(\bpo;t,\bb))$. When $\tau_{*}(\bpo;t,\bb) < +\infty$, what can we say about the slow solution and system at $\tau = \tau_{*}(\bpo;t,\bb)$?
\begin{lem}\label{lem:discontinuity}
Let $t \geqslant 0$, $\bpo \in \dom \partial_{\bp} V(\cdot;t,\bb)$, and suppose $\tau_{*}(\bpo;t,\bb) < +\infty$. Then~\eqref{eq:slow_local_solution} holds at $\tau = \tau_{*}(\bpo;t,\bb)$ but~\eqref{eq:dd_decay} does not:
\begin{equation}\label{eq:dd_discontinuity}
\normtwo{\bd(\bp(\tau_{*}(\bpo;t,\bb));t,\bb)} < e^{-t\tau_{*}(\bpo;t,\bb)}\normtwo{\bd(\bpo;t,\bb)}.
\end{equation}
\end{lem}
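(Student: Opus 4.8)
The plan is to prove the two assertions separately: the continuity of the slow solution at $\tau_{*}(\bpo;t,\bb)$, which yields~\eqref{eq:slow_local_solution} at the endpoint, and the strict inequality~\eqref{eq:dd_discontinuity}, which forces~\eqref{eq:dd_decay} to fail there. Write $\bd_{0} = \bd(\bpo;t,\bb)$, $\Delta_{*} = \Delta_{*}(\bpo;t,\bb)$ and $\bp_{*} = \bp(\tau_{*}(\bpo;t,\bb))$ for brevity. I would first record a reduction: since $\tau_{*}(\bpo;t,\bb) < +\infty$, the third (``otherwise'') case of~\eqref{eq:thm_tau0} is excluded, so $1 - t\Delta_{*} > 0$, $f(\tau_{*},t) = \Delta_{*}$, and $e^{-t\tau_{*}} = 1 - t\Delta_{*}$; moreover $\Delta_{*} < +\infty$ gives $\bd_{0} \neq \boldsymbol{0}$ by Proposition~\ref{prop:descent_direction}(i). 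For the continuity assertion I would use that the slow solution $\bp(\cdot)$ is continuous on $[0,+\infty)$, being the (absolutely continuous) solution furnished by Theorem~\ref{thm:eu_diff_inclusions}. Letting $\tau \uparrow \tau_{*}(\bpo;t,\bb)$ in~\eqref{eq:slow_local_solution}, which holds on $[0,\tau_{*}(\bpo;t,\bb))$ by Theorem~\ref{thm:mr_local_sol}, and using continuity of $\tau \mapsto f(\tau,t)$, gives $\bp_{*} = \bpo + f(\tau_{*},t)\bd_{0} = \bpo + \Delta_{*}\bd_{0}$; hence~\eqref{eq:slow_local_solution} holds at $\tau = \tau_{*}(\bpo;t,\bb)$.

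The upper bound half of~\eqref{eq:dd_discontinuity} is then immediate. Evaluating Proposition~\ref{prop:descent_direction}(iii) at the endpoint $\Delta = \Delta_{*}$ of its closed interval of validity yields $-(1-t\Delta_{*})\bd_{0} \in \partial_{\bp}V(\bp_{*};t,\bb)$. Since $\bd(\bp_{*};t,\bb) = -\proj_{\partial_{\bp}V(\bp_{*};t,\bb)}(\boldsymbol{0})$ is by definition the minimum-norm element of $\partial_{\bp}V(\bp_{*};t,\bb)$, we obtain $\normtwo{\bd(\bp_{*};t,\bb)} \leqslant (1-t\Delta_{*})\normtwo{\bd_{0}} = e^{-t\tau_{*}}\normtwo{\bd_{0}}$. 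The substantive work, and the main obstacle, is to upgrade this to a \emph{strict} inequality. Geometrically the strictness reflects that the inclusion $\mathcal{E}(\bpo + \Delta\bd_{0}) \subset \epo$ of Proposition~\ref{prop:descent_direction}(iv), valid only on the open interval $[0,\Delta_{*})$, breaks at $\Delta_{*}$, where a new constraint becomes active and enlarges the cone $\partial_{\bp}V(\bp_{*};t,\bb)$, letting its minimum-norm element strictly decrease.

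To turn this into a rigorous argument I would argue by contradiction. Suppose equality holds in~\eqref{eq:dd_discontinuity}. Because the minimum-norm element (the projection of $\boldsymbol{0}$ onto the closed convex set $\partial_{\bp}V(\bp_{*};t,\bb)$) is unique, the fact that $-(1-t\Delta_{*})\bd_{0}$ also attains the minimal norm forces $\bd(\bp_{*};t,\bb) = (1-t\Delta_{*})\bd_{0}$. As $1 - t\Delta_{*} > 0$ and $\bd_{0} \neq \boldsymbol{0}$, this is a strictly positive multiple of $\bd_{0}$. Now apply Proposition~\ref{prop:descent_direction}(i) at the base point $\bp_{*}$: there is $\Delta_{*}(\bp_{*};t,\bb) > 0$ with $\bp_{*} + s\,\bd(\bp_{*};t,\bb) \in \dom V(\cdot;t,\bb)$ for all $s \in [0,\Delta_{*}(\bp_{*};t,\bb)]$. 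Substituting $\bp_{*} = \bpo + \Delta_{*}\bd_{0}$ and $\bd(\bp_{*};t,\bb) = (1-t\Delta_{*})\bd_{0}$ shows $\bpo + \bigl(\Delta_{*} + s(1-t\Delta_{*})\bigr)\bd_{0} \in \dom V(\cdot;t,\bb)$ for such $s$; choosing any $s \in (0,\Delta_{*}(\bp_{*};t,\bb)]$ produces a feasible point $\bpo + \Delta'\bd_{0}$ with $\Delta' > \Delta_{*}$.

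This contradicts the maximality of $\Delta_{*}$. Indeed, $\dom V(\cdot;t,\bb) = \{\bp : \norminf{\Atop\bp} \leqslant 1\}$ is a convex polyhedron, so $\{\Delta \geqslant 0 : \bpo + \Delta\bd_{0} \in \dom V(\cdot;t,\bb)\}$ is an interval whose right endpoint is precisely $\Delta_{*}$, the first time a constraint becomes active in~\eqref{eq:descent_max_time}; hence $\bpo + \Delta\bd_{0} \notin \dom V(\cdot;t,\bb)$ for every $\Delta > \Delta_{*}$. This contradiction establishes the strict inequality~\eqref{eq:dd_discontinuity}. Finally, since~\eqref{eq:dd_discontinuity} gives $\bd(\bp_{*};t,\bb) \neq e^{-t\tau_{*}}\bd_{0}$, the identity~\eqref{eq:dd_decay} cannot hold at $\tau = \tau_{*}(\bpo;t,\bb)$, completing the proof. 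The only delicate point I anticipate is justifying the ``maximality'' step cleanly from the data of~\eqref{eq:descent_max_time} and Proposition~\ref{prop:descent_direction}(i); the convexity of $\dom V(\cdot;t,\bb)$ makes this routine, but it must be stated carefully so that the strict exit of the trajectory from $\dom V$ past $\Delta_{*}$ is not left implicit.
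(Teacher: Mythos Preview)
Your proposal is correct and takes essentially the same approach as the paper: both argue the strict inequality by contradiction, assuming $\bd(\bp_{*};t,\bb) = (1-t\Delta_{*})\bd_{0}$, then extending the line $\bpo + \Delta\bd_{0}$ to some $\Delta > \Delta_{*}$ while remaining in $\dom V(\cdot;t,\bb)$, contradicting the maximality of $\Delta_{*}$ encoded in~\eqref{eq:descent_max_time}. The minor differences are presentational---you invoke continuity of the slow solution and Proposition~\ref{prop:descent_direction}(i) directly, while the paper routes the same extension through a second application of Theorem~\ref{thm:mr_local_sol}---but the core idea is identical.
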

\begin{proof}
Suppose $\tau_{*}(\bpo;t,\bb) < +\infty$. By Proposition~\ref{prop:descent_direction}(iii) and Theorem~\ref{thm:mr_local_sol}, we have
\[
\frac{d\bp}{d\tau}(\tau_{*}(\bpo;t,\bb)), \frac{d\bpa}{d\tau}(\tau_{*}(\bpo;t,\bb)) \in -\partial_{\bp} V(\bp(\tau_{*}(\bpo;t,\bb));t,\bb).
\]
Hence we can extend the analysis done in Theorem~\ref{thm:mr_local_sol} to the value $\tau = \tau_{*}(\bpo;t,\bb)$, yielding $\bp(\tau_{*}(\bpo;t,\bb)) = \bpa(\tau_{*}(\bpo;t,\bb))$. Thus~\eqref{eq:slow_local_solution} holds at $\tau = \tau_{*}(\bpo;t,\bb)$.

Next, we prove inequality~\eqref{eq:dd_discontinuity}. Note that Proposition~\ref{prop:descent_direction}(iii) immediately implies
\[
\normtwo{\bd(\bp(\tau_{*}(\bpo;t,\bb));t,\bb)} \leqslant e^{-t\tau_{*}(\bpo;t,\bb)}\normtwo{\bd(\bpo;t,\bb)}.
\]
In addition, since the projection onto the closed and convex set $\partial_{\bp} V(\bp(\tau_{*}(\bpo;t,\bb));t,\bb)$ is unique (\ref{def:projection}), it sufficies to show that $\bd(\bp(\tau_{*}(\bpo;t,\bb));t,\bb) \neq e^{-t\tau_{*}(\bpo;t,\bb)}\bd(\bpo;t,\bb)$. Now, apply Theorem~\ref{thm:mr_local_sol} with the initial value $\bp(\tau_{*}(\bpo;t,\bb))$ at $\tau = \tau_{*}(\bpo;t,\bb)$ to find
\begin{equation}\label{eq:prop_discontinuity}
\bp(\tau) = \bp(\tau_{*}(\bpo;t,\bb)) + f(\tau-\tau_{*}(\bpo;t,\bb),t)\bd(\bp(\tau_{*}(\bpo;t,\bb));t,\bb)
\end{equation}
on the interval $[\tau_{*}(\bpo;t,\bb),\tau_{*}(\bp(\tau_{*}(\bpo;t,\bb));t,\bb)]$. Suppose, for a contradiction, that
\[
\bd(\bp(\tau_{*}(\bpo;t,\bb));t,\bb) = e^{-t\tau_{*}(\bpo;t,\bb)}\bd(\bpo;t,\bb).
\] 
Then~\eqref{eq:prop_discontinuity} can be developed into
\[
\bp(\tau) = \bp_{0} + \left(\Delta_{*}(\bpo;t,\bb) + f(\tau-\tau_{*}(\bpo;t,\bb),t)e^{-t\tau_{*}(\bpo;t,\bb)}\right)\bd(\bpo;t,\bb).
\]
However, this implies $\Delta_{*}(\bpo;t,\bb) + f(\tau-\tau_{*}(\bpo;t,\bb),t)e^{-t\tau_{*}(\bpo;t,\bb)} > \Delta_{*}(\bpo;t,\bb)$ for every $\tau \in (\tau_{*}(\bpo;t,\bb),\tau_{*}(\bp(\tau_{*}(\bpo;t,\bb));t,\bb)]$, contradicting the definition of $\Delta_{*}(\bpo;t,\bb)$ in~\eqref{eq:descent_max_time}. We therefore conclude $\bd(\bp(\tau_{*}(\bpo;t,\bb));t,\bb) \neq e^{-t\tau_{*}(\bpo;t,\bb)}\bd(\bpo;t,\bb)$.
\end{proof}

\subsection{Explicit global solution of the slow system}\label{subsec:global}
So far, we have computed the explicit local solution of the slow system~\eqref{eq:bp_slow_ode} on an interval $[0,\tau_{*}(\bpo;t,\bb))$, which can be extended to $\tau = \tau_{*}(\bpo;t,\bb)$ when the value is finite. We can apply Theorem~\ref{thm:mr_local_sol} and Lemma~\ref{lem:discontinuity} again with the initial condition $\bp(\tau_{*}(\bpo;t,\bb))$ to extend the local solution to $[0, \tau_{*}(\bp(\tau_{*}(\bpo;t,\bb));t,\bb))$. This is because $\bp(\tau_{*}(\bpo;t,\bb)) \in \dom \partial_{\bp}V(\cdot;t,\bb)$ and all assumptions of Theorem~\ref{thm:mr_local_sol} hold.

We now apply this argument repeatedly to compute explicitly the global solution of the slow system. Let $\bp^{(0)} \in \dom \partial_{\bp}V(\cdot;t,\bb)$, $\tau^{(0)} = 0$, and starting from $k=1$ let
\begin{equation}\label{eq:iterations_global_sol}
\begin{alignedat}{1}
	\bd^{(k-1)} &= \bd(\bp^{(k-1)};t,\bb),\\
	\Delta^{(k-1)} &= \Delta_{*}(\bp^{(k-1)};t,\bb),\\
	\tau^{(k)} &= \tau_{*}(\bp^{(k-1)};t,\bb),\\
	\bp^{(k)} &= \bp^{(k-1)} 
+ \begin{cases}
 	\min\left(\Delta^{(k-1)},1/t\right)\bd^{(k-1)} & \text{if} \ t>0 \\
 	\Delta^{(k-1)}\bd^{(k-1)} & \text{if $t = 0$ and $\Delta^{(k-1)} < +\infty$}, \\
 	\boldsymbol{0} & \text{otherwise.}
 \end{cases}
\end{alignedat}
\end{equation} 
The slow solution is given piecewise on the intervals $[\tau^{(k-1)},\tau^{(k)})$:
\begin{equation}\label{eq:slow_global_solution}
\bp(\tau) = \bp^{(k-1)} + f(\tau - \tau^{(k-1)},t)\bd^{(k-1)} \ \text{over} \ \tau \in [\tau^{(k-1)},\tau^{(k)}).
\end{equation}
This gives the explicit solution of the slow system over $[0,\tau^{(k)})$ up to any $k \in \mathbb{N}$. Note that if $t>0$, then the minimum in~\eqref{eq:iterations_global_sol} is attained at $1/t \iff \tau^{(K)} = +\infty$ for some $K \in \mathbb{N}$.

What can be said about the asymptotic limit $k\to+\infty$? The following Theorem asserts the limit converges in \emph{finite time}, in the sense that there exists some nonnegative integer $K$ such that $\bp(\tau) = \bp^{(K)}$ over the interval $[\tau^{(K)},+\infty)$.

\begin{thm}\label{thm:finite_time}
Let $t \geqslant 0$ and $\bp^{(0)} \in \dom \partial_{\bp}V(\cdot;t,\bb)$. Consider the slow system~\eqref{eq:bp_slow_ode} with initial condition $\bp(0) = \bp^{(0)}$, whose solution is given by~\eqref{eq:slow_global_solution}. Then there exists a nonnegative integer $K$ such that on the interval $\tau \in [\tau^{(K)},+\infty)$,
\[
\bp(\tau) = \begin{cases}
\bp^{(K)} + f(t,\tau-\tau^{(K)})\bd^{(K)} &\, \text{if $t>0$}, \\
\bp^{(K)} &\,\text{if $t=0$}.
\
\end{cases}
\]
\end{thm}
\begin{proof}
We will show there exists a nonnegative integer $K$ such that $t\Delta^{(K)} \geqslant 1$ when $t>0$ or $\Atop\bd^{(K)} = \boldsymbol{0}$ when $t = 0$. Let $k$ be a positive integer and suppose $\tau^{(j)} < +\infty$ for every $j \in \{1,\dots,k\}$ in~\eqref{eq:iterations_global_sol}. First, note that the directions $\{\bd^{(1)},\dots,\bd^{(k)}\}$ are obtained from projections landing on different faces of the convex cone $\partial_{\bp} V(\cdot;t,\bb)$. Indeed, this follows because Lemma~\ref{lem:discontinuity} implies $\normtwo{\bd^{(j)}} < \normtwo{\bd^{(j-1)}}$ for $j \in \{1,\dots,k\}$ and because the projection onto the closed, convex set $\partial_{\bp} V(\cdot;t,\bb)$ is unique (see~\ref{def:projection}). Since $\rank{\matr{A}} = m$ by assumption, there is at least one face on which the norm of the projection onto $\partial_{\bp} V(\cdot;t,\bb)$ is equal to zero. Thus there exists some $K > k$ such that either $\bd^{(K)} = \boldsymbol{0}$ or, when $t>0$, $t\Delta^{(K)} \geqslant 1$, whichever happens first (recall from Proposition~\ref{prop:descent_direction}(i) that $\bd^{(K)} = \boldsymbol{0}$ if and only if $\Delta^{(K)} = +\infty$). This yields the desired result.
\end{proof}

\begin{rem}\label{rem:finite_time_holds}
The proof above shows that finite-time convergence holds even when $\bb \notin \text{Im}(\matr{A})$. However, in that case there may be some $K \in \mathbb{N}$ for which $\bd^{(K-1)} \neq 0$ with $\Atop\bd^{(K-1)} = \boldsymbol{0}$ (this will always be the case when $t=0$). This causes no issues when $t>0$ because $\bp^{(K)}$ remains finite. However, when $t=0$ we have $\lim_{\tau \to +\infty}\normtwo{\bp(\tau)} = +\infty$, which means the slow solution diverges and there are no feasible solutions to the corresponding~\eqref{eq:BP_primal} and~\eqref{eq:BP_dual} problems.
\end{rem} 

\subsection{An exact algorithm for recovering optimal solutions to the lasso problem}\label{subsection:recovery_sols}
The analysis in Sections~\ref{subsec:local_solution}--\ref{subsec:global} yields the global solution to the slow system~\eqref{eq:bp_slow_ode}. Crucially, its asymptotic limit can be computed explicitly from~\eqref{eq:slow_global_solution} because it converges in finite time, meaning $\lim_{\tau \to +\infty}\bp(\tau) = \bp^{(K)}$ for some nonnegative integer $K$. This recovers a pair of primal and dual lasso solutions $(\bx^{s}(\tzero,\bb),\bp^{s}(t,\bb))$. Indeed, the minimal selection principle implies $\bp^{s}(\tzero,\bb) = \bp^{(K)}$, while a primal solution follows from the optimality conditions~\eqref{eq:bpdn_full_oc} and Lemma~\ref{lem:projection}: Letting $\hat{\bu}^{(K)}$ denote an optimal solution to the NNLS problem~\eqref{eq:nnls_problem}, then
\[
-t\bp^{(K)} = \bb - \matr{A}\matr{D}(\bp^{(K)})\hat{\bu}^{(K)} \implies \bx^{s}(t,\bb) = \matr{D}(\bp^{(K)})\hat{\bu}^{(K)}.
\]

Our results yield Algorithm~\ref{alg:lasso}, presented in the introduction of this paper. The algorithm integrates the slow system~\eqref{eq:bp_slow_ode} and computes its asymptotic limit in a finite number of steps. The slow solution is calculated \emph{exactly} because the descent directions and timesteps can be computed using, e.g., active set NNLS or cone projection algorithms as per Remark~\ref{rem:computing_nnls}.

Note that Algorithm~\ref{alg:lasso} requires an input $\bp^{(0)} \in \Rm$ with $\norminf{\Atop\bp^{(0)}} \leqslant 1$. For this, one can always take $\bp^{(0)} = -\bb/\norminf{\Atop\bb}$. In addition, Algorithm~\ref{alg:lasso} is particularly well suited for computing regularization paths. To do so, one selects a decreasing sequence of nonnegative hyperparameters $\{t^{(l)}\}_{l=0}^{L}$ (starting perhaps from $\bp^{s}(t^{(0)},\bb) = -\bb/t^{(0)}$) and sequentially computes the dual solutions $\{\bp^{s}(t^{(l)},\bb)\}_{l=0}^{L}$, using $\bp^{s}(t^{(l)},\bb)$ as the input in Algorithm~\ref{alg:lasso} for computing $\bp^{s}(t^{(l+1)},\bb)$.
\section{Continuation in the hyperparameter of the asymptotic limit}\label{sec:perturbations}
Sections~\ref{sec:msp}--\ref{sec:results} provide a novel characterization of the lasso problem using the dynamics of the slow system~\eqref{eq:bp_slow_ode} and its slow solution~\eqref{eq:slow_global_solution} for \emph{fixed} hyperparameter $t$. In practice, one often seeks to compute the solution path $t \mapsto (\bx^{s}(t,\bb),\bp^{s}(t,\bb))$ to, e.g., assess robustness of solutions~\cite{donoho2006stability,fosson2020sparse} or select a hyperparameter using data-driven methods~\cite{friedman2010regularization,osborne2000new,efron2004least,donoho2008fast,bringmann2018homotopy}. This historically motivated the development of the LARS algorithm~\cite{osborne2000new,efron2004least}, which is now well-known to fail without technical assumptions that are difficult to verify~\cite[Proposition 4.1]{bringmann2018homotopy}.

Here, we present a rigorous analysis of continuation of the asymptotic limit of the slow system~\eqref{eq:bp_slow_ode} in terms of the hyperparameter $t$, yielding lasso solution paths. We present in Section~\ref{subsec:generalized_homotopy} the local dependence of the slow solution on $t$. As we argue in Section~\ref{subsec:non-uniqueness}, this local dependence leads to a possibly non-unique local continuation of the primal lasso solution. This non-uniqueness issue, a well-known problem in the literature~\cite{tibshirani2013lasso}, arises from the non-uniqueness of solutions to an NNLS problem, as previously reported by~\cite{bringmann2018homotopy}. Finally, we present in Section~\ref{subsec:generalized_homotopyII} the global dependence of the slow solution, on $t$, naturally yielding a rigorous homotopy algorithm based on the minimal selection principle.


\subsection{Local dependence on hyperparameter of the slow solution}\label{subsec:generalized_homotopy}
We first describe how the descent direction $\bd(\bpo;\tzero,\bb)$ changes under perturbations in the hyperparameter and data. 
\begin{lem}\label{lem:tech_perturbation_I}
Let $\tzero \geqslant 0$, let $\bpo \in \dom V(\cdot;\tzero,\bb)$, and let $\hat{\bu}(\bpo;\tzero,\bb)$ be a global minimum of the NNLS problem~\eqref{eq:nnls_problem} so that $\bd(\bpo;\tzero,\bb) = \matr{A}\matr{D}(\bpo)\hat{\bu}(\bpo;\tzero,\bb) -(\bb +\tzero\bpo)$. In addition, let 
	$\delta_{0} \in \R$ be such that $\tzero + \delta_{0} \geqslant 0$. Then: 
	
	\begin{itemize}
	\item[(i)] The perturbed descent direction $\bd(\bpo;\tzero + \delta_{0},\bb)$ is given by 
	\begin{equation}\label{eq:descent_direction_change_in_t}
            \bd(\bpo;\tzero + \delta_{0},\bb) = \bd(\bpo;\tzero, \bb) + \matr{A}\matr{D}(\bpo)\hat{\bv}(\bpo;t,\bb,\delta_{0},\bdelta\bb) - \delta_{0} \bpo,
	\end{equation}
    where
    \begin{equation}\label{eq:nnls_problem_v}
    \begin{alignedat}{1}
        &\hat{\bv}(\bpo;\tzero,\bb,\delta_{0}) \in \argmin_{\bv \in \Rn} \normsq{\bd(\bpo;\tzero, \bb) + \matr{A}\matr{D}(\bpo)\bv - \delta_{0} \bpo} \\
        &\qquad\qquad \text{subject to} \
            \begin{cases}
                \bv_{\epo} &\geqslant -\hat{\bu}_{\epo}(\bpo;\tzero,\bb), \\
                \bv_{\ecpo} &= \boldsymbol{0}.
            \end{cases}
        \end{alignedat}
	\end{equation}

	\item[(ii)] (Linear perturbations in hyperparameter) Suppose $\bd(\bpo;\tzero,\bb) = \boldsymbol{0}$ and let $\delta_{0} = t-\tzero$ with $t\in[0,\tzero]$. Then~\eqref{eq:descent_direction_change_in_t} and~\eqref{eq:nnls_problem_v} refine to
	\begin{equation}\label{eq:descent_direction_change_in_t_2}
            \bd(\bpo;t,\bb) =  (1-t/\tzero) \left(\matr{A}\matr{D}(\bpo)\hat{\bv}(\bpo;\tzero, t,\bb) + t_{0}\bpo\right)
	\end{equation}
    where
    \begin{equation}\label{eq:nnls_problem_v_2}
    \begin{alignedat}{1}
        &\hat{\bv}(\bpo;\tzero, t,\bb) \in \argmin_{\bv \in \Rn} \normsq{\matr{A}\matr{D}(\bpo)\bv + t_{0}(\bpo)} \\
        &\text{subject to} \
            \begin{cases}
                &\bv_{j} \geqslant -\hat{\bu}_{j}(\bpo;\tzero,\bb)/(1-t/\tzero) \ \text{if $j \in \epo$}, \\
                &\bv_{\ecpo} = \boldsymbol{0}.
            \end{cases}
        \end{alignedat}
	\end{equation}	
\end{itemize}
\end{lem}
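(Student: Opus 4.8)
The plan is to exploit the fact that the equicorrelation set $\epo$ and the sign matrix $\matr{D}(\bpo)$ depend \emph{only} on $\bpo$ (through $-\Atop\bpo$) and are therefore insensitive to perturbations of the hyperparameter and data. Consequently, moving from $(\tzero,\bb)$ to $(\tzero+\delta_{0},\bb+\bdelta\bb)$ changes only the target vector $\bb+\tzero\bpo$ of the NNLS problem~\eqref{eq:nnls_problem} --- which becomes $(\bb+\tzero\bpo)+(\bdelta\bb+\delta_{0}\bpo)$ --- while leaving its feasible set structurally unchanged. The whole lemma then reduces to tracking how the NNLS minimizer, and hence the descent direction via the characterization~\eqref{eq:proj_general}, responds to this shift of the target.

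For part (i), I would write down the NNLS problem at $(\tzero+\delta_{0},\bb+\bdelta\bb)$ and perform the change of variables $\bu = \hat{\bu}(\bpo;\tzero,\bb) + \bv$. Since $\hat{\bu}_{\epo}(\bpo;\tzero,\bb) \geqslant \boldsymbol{0}$ and $\hat{\bu}_{\ecpo}(\bpo;\tzero,\bb) = \boldsymbol{0}$, the constraints $\bu_{\epo} \geqslant \boldsymbol{0}$, $\bu_{\ecpo} = \boldsymbol{0}$ become exactly $\bv_{\epo} \geqslant -\hat{\bu}_{\epo}(\bpo;\tzero,\bb)$, $\bv_{\ecpo} = \boldsymbol{0}$, matching~\eqref{eq:nnls_problem_v}. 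Substituting $\bd(\bpo;\tzero,\bb) = \matr{A}\matr{D}(\bpo)\hat{\bu}(\bpo;\tzero,\bb) - (\bb+\tzero\bpo)$ into the objective turns it into $\normsq{\bd(\bpo;\tzero,\bb) + \matr{A}\matr{D}(\bpo)\bv - \bdelta\bb - \delta_{0}\bpo}$, so the minimizer in $\bv$ is precisely $\hat{\bv}(\bpo;\tzero,\bb,\delta_{0},\bdelta\bb)$. Reassembling $\bd(\bpo;\tzero+\delta_{0},\bb+\bdelta\bb) = \matr{A}\matr{D}(\bpo)(\hat{\bu}+\hat{\bv}) - (\bb+\bdelta\bb+(\tzero+\delta_{0})\bpo)$ through~\eqref{eq:proj_general} and collecting terms yields~\eqref{eq:descent_direction_change_in_t}.

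For part (ii), I would specialize part (i) to $\bd(\bpo;\tzero,\bb) = \boldsymbol{0}$, $\delta_{0} = t-\tzero$, and $\bdelta\bb = -(\tzero-t)\bq = \delta_{0}\bq$. The affine term then collapses to $-\bdelta\bb - \delta_{0}\bpo = -\delta_{0}(\bpo+\bq) = \tzero(1-t/\tzero)(\bpo+\bq)$, using $\delta_{0} = -\tzero(1-t/\tzero)$. To expose the common factor $1-t/\tzero$ I would rescale the NNLS variable by setting $\bv = (1-t/\tzero)\bw$; because $t\in[0,\tzero)$ gives $1-t/\tzero>0$, the squared objective factors as $(1-t/\tzero)^{2}\normsq{\matr{A}\matr{D}(\bpo)\bw + \tzero(\bpo+\bq)}$ and the constraint $\bv_{\epo}\geqslant -\hat{\bu}_{\epo}(\bpo;\tzero,\bb)$ becomes $\bw_{\epo} \geqslant -\hat{\bu}_{\epo}(\bpo;\tzero,\bb)/(1-t/\tzero)$, which matches~\eqref{eq:nnls_problem_v_2}. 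Factoring $1-t/\tzero$ out of the resulting expression then produces~\eqref{eq:descent_direction_change_in_t_2}.

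The algebraic substitutions are routine; the points requiring care are preserving the direction of the inequality constraint under the rescaling --- precisely why $1-t/\tzero>0$, i.e.\ $t<\tzero$, matters, with the degenerate case $t=\tzero$ handled separately since both sides vanish --- and the non-uniqueness of the NNLS minimizer flagged in Remark~\ref{rem:computing_nnls}. The latter is not an obstacle because the formulas only ever involve the combination $\matr{A}\matr{D}(\bpo)\hat{\bv}$, which, being the difference of two \emph{unique} descent directions plus fixed vectors, is independent of the particular minimizer $\hat{\bv}$ chosen, by Lemma~\ref{lem:projection}.
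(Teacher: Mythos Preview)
Your proposal is correct and follows essentially the same approach as the paper: for (i) the paper performs exactly the change of variables $\bu = \hat{\bu}(\bpo;\tzero,\bb) + \bv$ in the perturbed NNLS problem and rewrites the objective via $\bd(\bpo;\tzero,\bb)$, and for (ii) it simply says to ``factor out the term $(1-t/\tzero)$ outside the optimization problem,'' which is your rescaling $\bv = (1-t/\tzero)\bw$. Your added remarks on the sign of $1-t/\tzero$, the degenerate case $t=\tzero$, and the invariance of $\matr{A}\matr{D}(\bpo)\hat{\bv}$ under the choice of NNLS minimizer are sound and slightly more explicit than the paper's treatment.
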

\begin{proof}
See Appendix~\ref{app:tech_perturbation_I} for the proof.	
\end{proof}

In the stationary case $\bd(\bpo;\tzero,\bb) = \boldsymbol{0}$ with $\tzero>0$, Lemma~\ref{lem:tech_perturbation_I} characterizes \emph{implicitly} how the primal and dual lasso solutions change under perturbations in the hyperparameter. Next, we show how these perturbations characterize these changes \emph{explicitly}.
\begin{prop}\label{prop:generalized_homotopy}
Let $\tzero > 0$ and $(\bx^{s}(\tzero,\bb),\bp^{s}(\tzero,\bb))$ denote a pair of primal and dual lasso solutions at hyperparameter $\tzero$ and data $\bb$. In addition, let
	\begin{equation}\label{eq:nnls_problem_v_3}
    \begin{alignedat}{1}
        &\hat{\bv}^{s}(\tzero,\bb) \in \argmin_{\bv \in \Rn} \normsq{\matr{A}\matr{D}(\bp^{s}(\tzero,\bb))\bv + t_{0}(\bp^{s}(\tzero,\bb))} \\
        &\text{subject to} \
            \begin{cases}
                \bv_{j} &\geqslant 0 \ \text{if $j \in \mathcal{E}(\bp^{s}(\tzero,\bb))$ and $\bx^{s}_{j}(\tzero,\bb) = 0$}, \\
                \bv_{j} &= 0 \ \text{if $j \in \mathcal{E}^{\mathsf{C}}(\bp^{s}(\tzero,\bb))$},
            \end{cases}
        \end{alignedat}
	\end{equation}	
	and define
	\begin{equation}\label{eq:homotopy_parameters}
    \begin{alignedat}{1}
    &\bxi^{s}(\tzero,\bb) \coloneqq \matr{A}\matr{D}(\bp^{s}(\tzero,\bb))\hat{\bv}^{s}(\tzero,\bb) + \tzero(\bp^{s}(\tzero,\bb)), \\
    & C^{s}(\tzero, \bb) \coloneqq \\
    &\quad \inf_{j \in \{1,\dots,n\}} \left\{\frac{\sign{\left(\langle \matr{D}(\bp^{s}(\tzero,\bb))\Atop\bxi^{s}(\tzero,\bb),\be_j\rangle\right)} - \langle \matr{D}(\bp^{s}(\tzero,\bb))\Atop\bp^{s}(\tzero,\bb),\be_j\rangle)}{\langle\matr{D}(\bp^{s}(\tzero,\bb)) \Atop\bxi^{s}(\tzero,\bb),\be_j\rangle} \right\}, \\
    & T_{+}(\tzero,\bb) \coloneqq \frac{\tzero}{1 + \tzero C^{s}(\tzero, \bb)}, \qquad T_{-}(\tzero,\bb,\hat{\bv}^{s}) \coloneqq  \tzero \left(1 - \inf_{\substack{j \in \eqset(\bp^{s}(\tzero,\bb)) \\ \bx^{s}_{j}(\tzero,\bb) \neq 0 \\ \hat{\bv}_{j}^{s}(\tzero,\bb) \leqslant -|\bx_{j}^{s}(\tzero,\bb)|}} \frac{|\bx_{j}^{s}(\tzero,\bb)|}{|\hat{\bv}^{s}_{j}(\tzero,\bb)|}\right), \\
    & t_{1} \coloneqq \max\left(T_{-}(\tzero,\bb,\hat{\bv}^{s}),T_{+}(\tzero,\bb)\right).
    \end{alignedat}
    \end{equation} 
    Then $0 \leqslant t_{1} < \tzero$ and, for every $t \in [t_{1}, \tzero]$,
    \begin{equation}\label{eq:homotopy_solutions}
    \begin{alignedat}{1}
    	\bx^{s}(t,\bb) &= \bx^{s}(\tzero,\bb) + \left(1 - \frac{t}{\tzero}\right) \matr{D}(\bp^{s}(\tzero,\bb))\hat{\bv}^{s}(\tzero,\bb), \\
    	\bp^{s}(t,\bb) &= \begin{cases}
 		&\bp^{s}(\tzero,\bb) + \left(\frac{1}{t} - \frac{1}{\tzero}\right)\bxi^{s}(\tzero,\bb) \quad \text{if $t_{1}(\tzero,\bb) > 0$}, \\
 		&\bp^{s}(\tzero,\bb) \quad \text{otherwise.}
 		\end{cases}
    \end{alignedat}
    \end{equation}
\end{prop}
\begin{proof}
See Appendix~\ref{app:generalized_homotopy} for the lengthy and technical proof.
\end{proof}

\begin{rem}\label{rem:kinks}
The numbers $T_{+}(\tzero,\bb)$ and $T_{-}(\tzero,\bb,\hat{\bv}^{s})$ identify potential ``kinks" in the piecewise linear dependence of the primal and dual solutions on $t$. If $t_{1} = T_{+}(\tzero,\bb) > 0$, then there is at least one index $j \in \{\{1,\dots,n\} : \bx^{s}_{j}(\tzero,\bb) = 0\}$ that joins the set $\eqset(\bp^{s}(t_{1},\bb))$, and this index is new if $j \in \eqcset(\bp^{s}(\tzero,\bb))$. If $t_{1} = T_{-}(\tzero,\bb,\hat{\bv}^{s}) > 0$, then there is at least one index $j \in \eqset(\bp^{s}(\tzero,\bb))$ such that $\bx^{s}_{j}(t_{1}, \bb) = \boldsymbol{0}$. This index may leave the equicorrelation set $\eqset(\bp^{s}(t_{1},\bb-(\tzero-t_{1})))$ beyond $t > t_{1}$.
\end{rem}


\subsection{Non-uniqueness in the local dependence on hyperparameter}\label{subsec:non-uniqueness}
The local continuation of $(\bx^{s}(\tzero,\bb),\bp^{s}(\tzero,\bb))$ depends on the solution $\bv^{s}(\tzero,\bb)$ and residual vector $\bxi^{s}(\tzero,\bb)$ of the NNLS problem~\eqref{eq:nnls_problem_v_3}. Since the residual vector is unique, the number $T_{+}(\tzero,\bb)$ and the continuation of $\bp^{s}(\tzero,\bb)$ are also unique. The NNLS solution, however, is generally not unique. This leads to complications because the number $T_{-}(\tzero,\bb,\hat{\bv}^{s})$ may not be unique. Thus the continuation of $\bx^{s}(t,\bb)$ beyond $t < \tzero$ may not be unique. This is not new and is a well-known problem~\cite{tibshirani2013lasso,bringmann2018homotopy}.

To understand how the non-uniqueness of the NNLS problem~\eqref{eq:nnls_problem_v_3} arises, it helps to identify when the solution is unique. If the least-squares solution 
\[
\bvlsq = \tzero(\Atop_{\eqset(\bp^{s}(\tzero,\bb)}\matr{A}_{\eqset(\bp^{s}(\tzero,\bb))})^{-1} \matr{D}(\bp^{s}(\tzero,\bb))\Atop_{\eqset(\bp^{s}(\tzero,\bb))}(\bp^{s}(\tzero,\bb))
\]
satisfies $\bvlsq_{j} \geqslant 0$ for every $j \in \eqset(\bp^{s}(\tzero,\bb))$ with $\bx_{j}^{s}(\tzero,\bb) = 0$ and $\matr{A}_{\eqset(\bp^{s}(\tzero,\bb))}$ has full column rank, then it is the unique solution. If instead $\matr{A}_{\eqset(\bp^{s}(\tzero,\bb))}$ has full row rank, then $\bvlsq$ is the unique solution with least $\ell_{2}$ norm. Thus one possibility is to select the solution with least $\ell_{2}$-norm. However, in practice we have found that it was unnecessary; see Remark~\ref{rem:homotopy_convergence}.


\subsection{Global dependence on hyperparameter of the slow solution}\label{subsec:generalized_homotopyII}
From a dynamical systems perspective, Lemma~\ref{lem:tech_perturbation_I} and Proposition~\ref{prop:generalized_homotopy} describe the local continuation of the asymptotic limit of the slow system~\eqref{eq:bp_slow_ode} in terms of the hyperparameter. This continuation is local because it holds on an interval $t \in [t_{1},\tzero]$, and it is not unique whenever the solution to the NNLS problem~\eqref{eq:nnls_problem_v_3} is not unique. Nevertheless, if $t_{1} \neq 0$, we can apply Proposition~\ref{prop:generalized_homotopy} again to extend the local continuation to an interval $[t_{2},\tzero]$ with $0 \leqslant t_{2} < t_{1}$.

We now apply Proposition~\ref{prop:generalized_homotopy} repeatedly, starting from $t^{(0)} = \norminf{\Atop\bb}$, $\bx^{(0)} = \boldsymbol{0}$, and $\bp^{(0)} = -\bb/\norminf{\Atop\bb}$. Doing so $k$ times yields a global continuation of the asymptotic limit of the slow system~\eqref{eq:bp_slow_ode}, hence a global continuation of the pair of primal and dual solutions $(\bx^{s}(t,\bb),\bp^{s}(t,\bb))$ on $t \in [t^{(k)},t^{(0)}]$. As discussed in Section~\ref{subsec:non-uniqueness}, the continuation in the primal solution $\bx^{s}(t,\bb)$ is not unique. Choosing at each $k$ the NNLS solution to problem~\eqref{eq:nnls_problem_v_3} with least $\ell_{2}$ norm, we obtain Algorithm~2 below. It is nearly identical to the generalized homotopy algorithm proposed by Bringmann et al.~\cite{bringmann2018homotopy}, with the exception of the breakpoints, which here are simpler and more explicit. Algorithm~2 yields a lasso solution path and converges in finite time.
\begin{algorithm}[ht]
	\label{alg:homotopy_alg}
    \caption{Homotopy algorithm for computing the primal and dual solution paths to the lasso problem.}
    \Input{A matrix $\matr{A} \in \mathbb{R}^{m \times n}$ and a vector $\bb \in \image{\matr{A}}\setminus{\{\boldsymbol{0}\}}$.}
    \Output{A finite sequence $\left\{t^{(k)}, \bx^{(k)},\bp^{(k)}\right\}_{k=0}^{K}$ specifying a solution path to the lasso problem.}
    \BlankLine
    \emph{Set $t^{(0)} = \norminf{\Atop\bb}$, $\bx^{(0)} = \boldsymbol{0}$ and $\bp^{(0)} = -\bb/\norminf{\Atop\bb}$}\;
    \For{$k = 1$ until convergence} {
    	Compute $\eqset^{(k-1)} = \{j \in \{1,\dots,n\} : |\langle-\Atop\bp,\be_{j}\rangle| = 1\}$\;
    	Compute $\matr{D}^{(k-1)} = \diag\left(\sign{(-\Atop\bp^{(k-1)}})\right)$\;
        Compute the solution $\hat{\bv}^{(k-1)} \in \argmin_{\bv \in \Rn} \normsq{\matr{A}_{\eqset^{(k-1)}}\matr{D}_{\eqset^{(k-1)}}^{(k-1)}\bv + t^{(k-1)}\bp^{(k-1)}}$  subject to \[
        \begin{cases}
 	\bv_{j} \geqslant 0,\, &\text{if $j \in \eqset^{(k-1)}$ and $\bx_{j}^{(k-1)} = 0$} \\
 	\bv_{j} = 0, \, &\text{if $j \in \left(\eqset^{(k-1)}\right)^{\mathsf{C}}$}
 \end{cases}
 \]
 with minimal $\ell_{2}$ norm\;
 Compute $\bxi^{(k-1)} = \matr{A}\matr{D}(\bp^{(k-1)})\hat{\bv}^{(k-1)} + t^{(k-1)}\bp^{(k-1)}$ \;
        Compute the numbers $T_{-}(t^{(k-1)},\bb,\hat{\bv}^{(k-1)})$ and $T_{+}(t^{(k-1)},\bb)$ from~\eqref{eq:homotopy_parameters}\;
        Update $t^{(k)} = \max(T_{-}(t^{(k-1)},\bb,\hat{\bv}^{(k-1)}),T_{+}(t^{(k-1)},\bb))$\;
        Update $\bx^{(k)} = \bx^{(k-1)} + \left(1-t^{(k)}/t^{(k-1)}\right)\matr{D}(\bp^{(k-1)})\hat{\bv}^{(k-1)}$\; 
        \uIf{$t^{(k)} = 0$} {
        	$\bp^{(k)} = \bp^{(k-1)}$ \;
        	break \tcp*[h]{The algorithm has converged}\;
        }
        \uElse {
        Update $\bp^{(k)} = \bp^{(k-1)} + (1/t^{(k)} - 1/t^{(k-1)})\bxi^{(k-1)}$\;
        }
    }
\end{algorithm}
\begin{thm}\label{thm:classical_homotopy}
The global continuation method described in Algorithm~2 is correct and converges in finite time, that is, there exists a nonnegative integer $K$ such that $t^{(K)} = 0$.
\end{thm}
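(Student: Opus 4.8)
The plan is to establish the two claims — correctness of the emitted path and finite termination — separately. Correctness will follow by induction directly from Proposition~\ref{prop:generalized_homotopy}, while finite termination will follow from a two-level combinatorial argument: the dual iterate sweeps each face of the dual polyhedron on a single interval of $s=1/t$, and within each such interval the primal support changes only finitely often.

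\emph{Correctness.} First I would check the base case at $t^{(0)} = \norminf{\Atop\bb}$. The pair $(\bx^{(0)},\bp^{(0)}) = (\boldsymbol{0}, -\bb/\norminf{\Atop\bb})$ satisfies the optimality conditions~\eqref{eq:bpdn_all_oc}: indeed $-t^{(0)}\bp^{(0)} = \bb = \bb - \matr{A}\boldsymbol{0}$, and $-\Atop\bp^{(0)} = \Atop\bb/\norminf{\Atop\bb}$ has $\ell_{\infty}$-norm equal to $1$ and hence lies in $\partial\normone{\cdot}(\boldsymbol{0})$; since the dual solution is unique for $t^{(0)}>0$, this is the required optimal pair. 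For the inductive step, assuming $(\bx^{(k-1)},\bp^{(k-1)})$ is an optimal pair at $t^{(k-1)}>0$ and data $\bb$, I would invoke Proposition~\ref{prop:generalized_homotopy} with $\tzero = t^{(k-1)}$ and $\bq=\boldsymbol{0}$, so that the data $\bb-(\tzero-t)\bq = \bb$ is left unchanged. The NNLS problem and the breakpoints $T_{\pm}$ of Algorithm~2 coincide verbatim with~\eqref{eq:nnls_problem_v_3}--\eqref{eq:homotopy_parameters}, and the updates reproduce~\eqref{eq:homotopy_solutions}; hence $t^{(k)} = \max(T_-,T_+)$ and $(\bx^{(k)},\bp^{(k)})$ is an optimal pair at $t^{(k)}$. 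The least-$\ell_2$-norm selection of $\hat{\bv}^{(k-1)}$ pins down a unique NNLS solution and so makes every iteration well defined, which is precisely what is needed in view of the non-uniqueness discussed in Section~\ref{subsec:non-uniqueness}.

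\emph{Finite termination.} By Proposition~\ref{prop:generalized_homotopy}(i) we have $0 \leqslant t^{(k)} < t^{(k-1)}$, so $\{t^{(k)}\}$ is strictly decreasing and bounded below; the content is to rule out an infinite sequence accumulating at some $t_{*}>0$. Reparametrising by $s = 1/t$, the dual path is piecewise linear in $s$ and, on each piece, lies in the relative interior of a fixed face of the polyhedron $\{\bp : \norminf{\Atop\bp}\leqslant 1\}$. The key observation is that the minimiser of the quadratic $\tfrac{t}{2}\normsq{\bp} + \langle\bp,\bb\rangle$ over the affine hull of a fixed face is an affine function of $s$ (solve the normal equations), so the set of $s$ for which $\bp^{s}$ lies in the relative interior of that face is the preimage of a convex set under an affine map, hence an interval. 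Consequently no dual face is ever revisited, and since the polyhedron has finitely many faces there are only finitely many face-shrinking ($T_+$) events. To finish, I would bound the $T_-$ events occurring within a single dual face: there the optimality condition $-t\bpsol = \bb-\matr{A}\bxsol$ forces $\matr{A}_{\eqset}\bx^{s}_{\eqset} = (\bb + \boldsymbol{\beta}) + t\boldsymbol{\alpha}$ for fixed vectors $\boldsymbol{\alpha},\boldsymbol{\beta}$, so the primal iterate is governed by a parametric problem with data affine in $t$ and undergoes only finitely many support changes. Summing over the finitely many faces bounds the total number of breakpoints, so $t^{(K)}=0$ for some finite $K$.

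\emph{Main obstacle.} The delicate step is the last one: ruling out primal cycling inside a fixed dual face. Unlike the fixed-$t$ slow system of Theorem~\ref{thm:finite_time}, where the strict norm decrease of Lemma~\ref{lem:discontinuity} immediately separates the visited faces of $\partial_{\bp}V$, here the dual objective value is constant as the primal support varies and so does not by itself forbid an index from leaving and re-entering the support. I would resolve this using the least-$\ell_2$-norm selection together with the affine-in-$t$ structure of $\matr{A}_{\eqset}\bx^{s}_{\eqset}$: this makes the chosen primal path piecewise affine, with breakpoints only where a coordinate crosses zero, which each coordinate does finitely often, and it is here that the monotonicity in Proposition~\ref{prop:generalized_homotopy}(ii) and the uniqueness of the least-norm selection prevent a support configuration from recurring. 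Rank deficiency of $\matr{A}_{\eqset}$ and degenerate breakpoints (where $\bxi^{(k-1)} = \boldsymbol{0}$ or $\hat{\bv}^{(k-1)} = \boldsymbol{0}$) would be treated as special cases of the same affine analysis.
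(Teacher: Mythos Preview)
Your correctness argument is exactly what the paper does: invoke Proposition~\ref{prop:generalized_homotopy} with $\bq=\boldsymbol{0}$ and induct, noting that the least-$\ell_2$-norm selection makes each step well defined. For finite termination the paper gives no argument of its own --- it simply states that the proof is identical to that of Theorem~4.2 in \citet{bringmann2018homotopy} and omits it. Your attempt therefore goes beyond what the paper actually provides, and your two-level strategy (finitely many dual faces, each visited on a single interval of $s=1/t$; finitely many primal support changes within each face) is the right architecture and is, in essence, the argument one finds in the cited reference.

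Two places where your sketch is thin. First, the interval claim for a fixed dual face $F$: knowing that $\proj_{\mathrm{aff}(F)}(-s\bb)\in\ri(F)$ on an interval is not quite enough to conclude that $\bp^{s}(1/s,\bb)\in\ri(F)$ there; you also need the normal-cone condition $-s\bb-\proj_{\mathrm{aff}(F)}(-s\bb)\in N_{F}$, which is a second affine-in-$s$ membership test. The intersection of two intervals is an interval, so this is a one-line patch. Second, and more substantively, your proposed mechanism for ruling out primal cycling within a fixed dual face is not the right one. The monotonicity in Proposition~\ref{prop:generalized_homotopy}(ii) concerns $V$ and $\normtwo{\bp^{s}}$, both of which are dual quantities that are already pinned down on a fixed face by the uniqueness of $\bp^{s}(t,\bb)$; they do not distinguish primal support patterns and therefore cannot by themselves prevent a support configuration from recurring. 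The argument that does work is the one you allude to but do not carry out: with the equicorrelation set and signs frozen, the least-$\ell_2$-norm primal path is governed by a parametric least-squares problem whose data are affine in $t$, so its solution is piecewise affine with finitely many breakpoints. That is the content you would need to supply; you are right to flag it as the main obstacle.
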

\begin{proof}
Correctness follows from Proposition~\ref{prop:generalized_homotopy} and the discussion in Section~\ref{subsec:non-uniqueness}. Convergence in finite time is identical to that of Theorem~4.2 in Bringmann et al.~\cite{bringmann2018homotopy} and is omitted.
\end{proof}
\begin{rem}\label{rem:homotopy_convergence}
In practice, we have found that Algorithm~2 always converged when using the Lawson--Hanson algorithm~\cite{lawson1995solving} or Meyers's algorithm~\cite{meyer2013simple} to compute a solution to the NNLS problem on line 5 without explicitly finding the minimal $\ell_{2}$-norm solution, including on pathological examples such at those in, e.g., ~\cite{mairal2012complexity,bringmann2018homotopy}.
\end{rem}
\section{Numerical experiments}\label{sec:numerics}
This section presents some numerical experiments to compare the accuracy and run times of \textbf{Algorithm~1} with some state-of-the-art algorithms for computing regularization and solution paths to~\eqref{eq:lasso_primal}. Specifically, we compare our algorithms with the glmnet software package \textbf{glmnet}~\cite{tibshirani1996regression,zou2005regularization,friedman2010regularization,hastie2021glmnet}, MATLAB's native lasso implementation \textbf{mlasso} (ostensibly the implementation of the glmnet package to MATLAB, except that it has more options, including for better control of the accuracy), and the fast iterative shrinkage thresholding algorithm \textbf{fista} with a strong selection rule~\cite{beck2009fast,ghaoui2010safe,tibshirani2012strong,raj2016screening}. For the special case $t=0$, we compare the performance of \textbf{Algorithm~1} with regularization path starting from $(t^{(0)},\bp^{(0)}) = (\norminf{\Atop\bb}, -\bb/t^{(0)})$ and no regularization path starting from input $\bp^{(0)} = -\bb/\norminf{\Atop\bb}$. All experiments were carried out in MATLAB on an Apple M3 processor.

\paragraph{Implementations and datasets.} All our algorithms require solving a NNLS problem. For this, we use Meyer's algorithm~\cite{meyer2013simple}, an active set NNLS algorithm generalizing the Lawson--Hanson algorithm~\cite{lawson1995solving} to arbitrary starting active sets, and so particularly suitable for Algorithm~1. For \textbf{glmnet}, we use the code available at~\url{https://github.com/junyangq/glmnet-matlab}. For \textbf{fista}, we use the variant described in~\cite[Algorithm 5, Page 197]{chambolle2016introduction} together with the selection rule described in~\cite[Equation 7]{tibshirani2012strong}. For our numerical experiments, we use the benchmark datasets provided by Lorenz et al.~\cite[Tables 1-2, Section 4]{lorenz2015solving}, tailored specifically for~\eqref{eq:lasso_primal} and~\eqref{eq:BP_primal} and to use for comparing different numerical algorithms. Each dataset comprises a triplet $(\matr{A},\bb,\bx_{\text{BP}}^{s})$, where $\bx_{\text{BP}}^{s} = \argmin_{\bx \in \Rm}\normone{\bx} \ \text{s.t.} \ \matr{A}\bx=\bb$. We use in total ten different datasets, corresponding to some of their largest dense matrix ($m = 1024,\, n = 8192$) with six different observed data and solution vectors and their largest sparse matrix ($m = 8192,\, n = 49152$) with four different observed data and solution vectors. The solution vectors have either high dynamic range (HDR) or low dynamic range (LDR), and their support either satisfy the so-called Exact Recovery Condition (ERC)~\cite{tropp2006just}, an extended form of it (extERC) or, for dense matrices only, no exact recovery condition (noERC) (see~\cite{lorenz2015solving} for details). All our MATLAB implementations, external software and datasets will be bundled and made publicly available on Zotero.

\subsection{Accuracy checks}\label{subsec:numerics_accuracy}
We compare how accurately \textbf{glmnet}, $\textbf{fista}$ and $\textbf{mlasso}$ compute solutions to~\eqref{eq:lasso_primal} and~\eqref{eq:lasso_dual} (via the optimality conditions $t\bp^{s}(t,\bb) = \matr{A}\bx^{s}(t,\bb)-\bb$) across solution paths. We use dataset 548 (dense $1024\times 8192$ matrix, LDR, noERC) and dataset 474 (sparse $8192 \times 49512$ matrix, LDR, extERC) from~\cite{lorenz2015solving}, their most computationally demanding datasets. For each dataset, we first compute the entire solution path $t \mapsto (\bx^{s}(t,\bb),\bp^{s}(t,\bb))$ using \textbf{Algorithm~2} and identify all $K$ kinks in $t$ in the solution path (see Remark~\ref{rem:kinks}). Then we run \textbf{Algorithm~1}, \textbf{glmnet}, \textbf{fista}, and \textbf{mlasso} (for dataset 584 as \textbf{mlasso} does not support sparse matrices) at the kinks $\{t^{(k)}\}_{k=0}^{K}$, starting from $t^{(0)} = \norminf{\Atop\bb}$.

We perform two runs for each dataset. In the first run, we use the default options of \textbf{glmnet} and \textbf{mlasso} and a relative tolerance of $10^{-4}$ for \textbf{fista} (i.e., the algorithm stops when the relative difference in $\ell_{\infty}$-norm of the dual updates is less than $10^{-4}$). In the second run, we use harsher tolerances ($\text{thresh} = 10^{-13}$ for \textbf{glmnet}, $\text{RelTol} = 10^{-8}$ for \textbf{mlasso}, and a relative tolerance of $10^{-8}$ for \textbf{fista}). In both runs, we use a tolerance of $10^{-8}$ in \textbf{Algorithms 1} and \textbf{2} when evaluating their equicorrelation sets on line 3. After convergence of each algorithm, we evaluate the dual objective function in~\eqref{eq:lasso_dual} and number of nonzero components of their primal solutions along the solution path.

Figures~\ref{fig:accuracy_1} and~\ref{fig:accuracy_2} below show the relative errors of the dual objective function and the dual solution with respect to \textbf{Algorithm~1}, as well as the number of nonzero components of the primal solutions near the end of the solution path. \textbf{Algorithm~2} essentially coincides with \textbf{Algorithm~1} in all cases. More importantly, \textbf{Algorithm~1} always achieved better optimality in its dual objective function compared to the other algorithms, hence why we use it as the measure for computing all relative errors.

At default tolerance, \textbf{mlasso} and $\textbf{fista}$ performed reasonably well while \textbf{glmnet} performed poorly. In particular, \textbf{glmnet} produced a numerical solution with many more nonzero components than all other algorithms. This remained the case even in dataset \#~474 with a harsh tolerance. Taken together, we see that \textbf{Algorithm~1} is the clear winner when it comes to accuracy, with \textbf{mlasso} and \textbf{fista} performing well, and with \textbf{glmnet} performing poorly, even when using harsher tolerances.

\begin{figure}
    \centering
    \begin{subfigure}{0.49\textwidth}
        \centering
        \includegraphics[width=0.95\linewidth]{./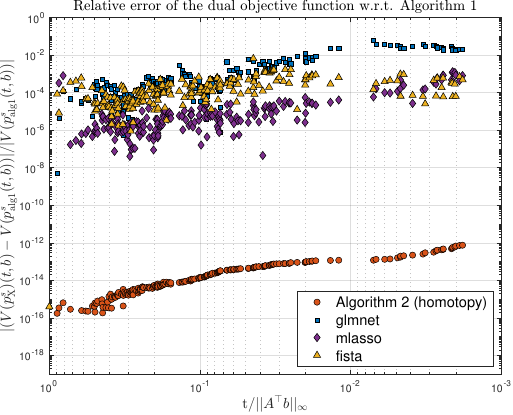}
    \end{subfigure}
    \hfill
    \begin{subfigure}{0.49\textwidth}
        \centering
        \includegraphics[width=0.95\linewidth]{./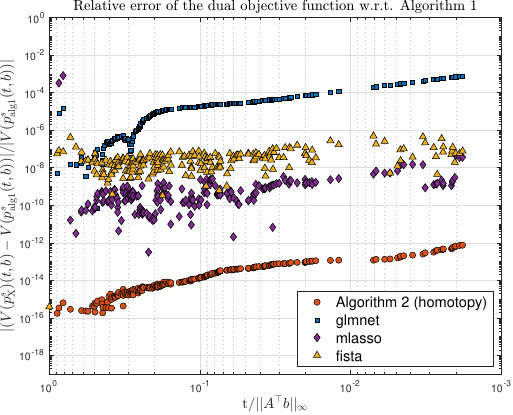}
    \end{subfigure}

    \begin{subfigure}{0.49\textwidth}
        \centering
        \includegraphics[width=0.95\linewidth]{./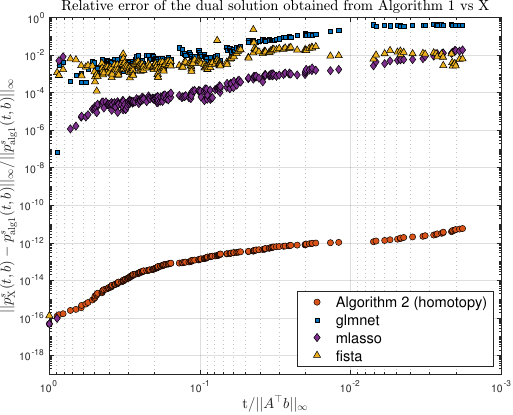}
    \end{subfigure}
    \hfill
    \begin{subfigure}{0.49\textwidth}
        \centering
        \includegraphics[width=0.95\linewidth]{./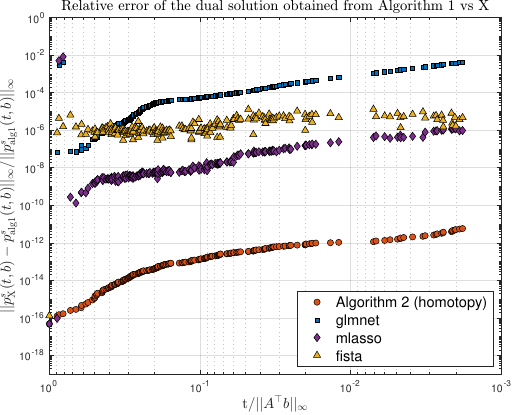}
    \end{subfigure}
    
    \begin{subfigure}{0.49\textwidth}
        \centering
        \includegraphics[width=0.95\linewidth]{./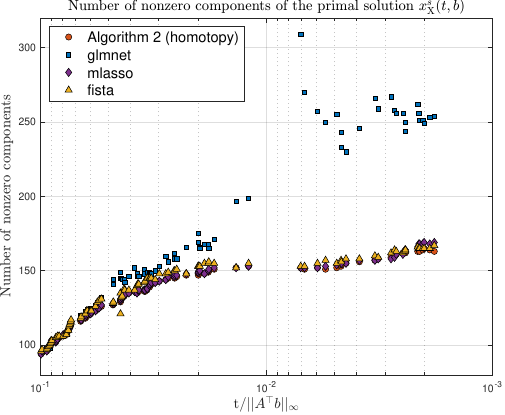}
    \end{subfigure}
    \hfill
    \begin{subfigure}{0.49\textwidth}
        \centering
        \includegraphics[width=0.95\linewidth]{./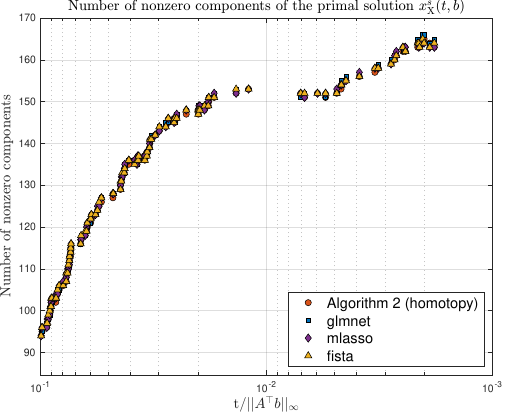}
    \end{subfigure}
    \caption{Relative error of the dual objective function with respect to \textbf{Algorithm~1} (first row), relative error in $\ell_{\infty}$-norm of the dual solution with respect to \textbf{Algorithm~1} (second row), and number of nonzero components of the primal solutions at the end of the solution paths. The dataset used is \#~548 from~\cite{lorenz2015solving}. Left: Default tolerances ($\text{thresh} = 10^{-4}$ for \textbf{glmnet}, $\text{RelTol} = 10^{-4}$ for \textbf{mlasso}, and a relative tolerance of $10^{-4}$ for \textbf{fista}). Right: Harsher tolerances ($\text{thresh} = 10^{-13}$ for \textbf{glmnet}, $\text{RelTol} = 10^{-8}$ for \textbf{mlasso}, and a relative tolerance of $10^{-8}$ for \textbf{fista}).}
    \label{fig:accuracy_1}
\end{figure}
\clearpage
\begin{figure}
    \centering
    \begin{subfigure}{0.49\textwidth}
        \centering
        \includegraphics[width=0.95\linewidth]{./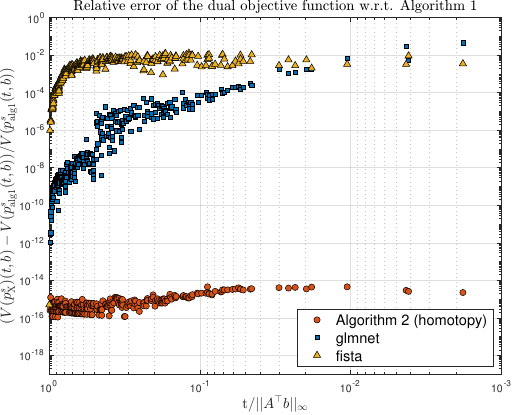}
    \end{subfigure}
    \hfill
    \begin{subfigure}{0.49\textwidth}
        \centering
        \includegraphics[width=0.95\linewidth]{./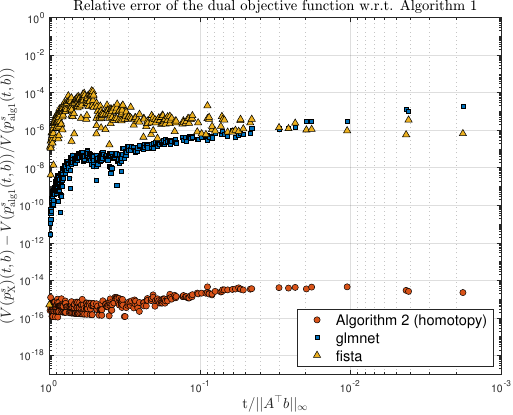}
    \end{subfigure}

    \begin{subfigure}{0.49\textwidth}
        \centering
        \includegraphics[width=0.95\linewidth]{./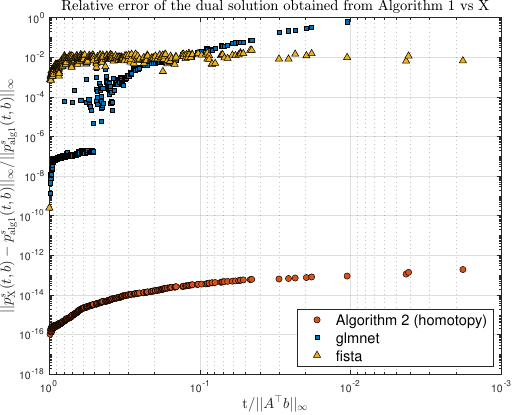}
    \end{subfigure}
    \hfill
    \begin{subfigure}{0.49\textwidth}
        \centering
        \includegraphics[width=0.95\linewidth]{./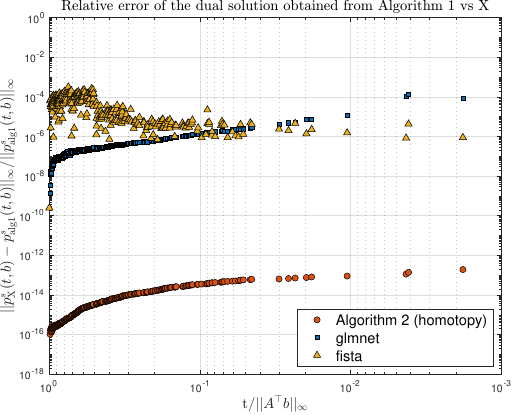}
    \end{subfigure}
    
    \begin{subfigure}{0.49\textwidth}
        \centering
        \includegraphics[width=0.95\linewidth]{./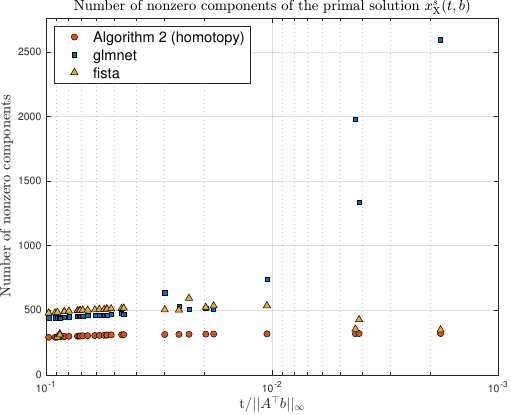}
    \end{subfigure}
    \hfill
    \begin{subfigure}{0.49\textwidth}
        \centering
        \includegraphics[width=0.95\linewidth]{./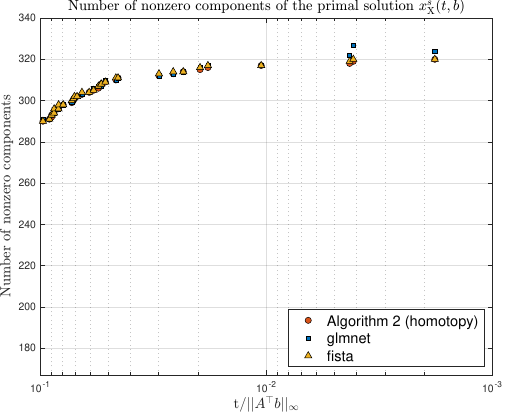}
    \end{subfigure}
    \caption{Relative error of the dual objective function with respect to \textbf{Algorithm~1} (first row), relative error in $\ell_{\infty}$-norm of the dual solution with respect to \textbf{Algorithm~1} (second row), and number of nonzero components of the primal solutions at the end of the solution path. The dataset used is \#~474 from~\cite{lorenz2015solving}. Left: Default tolerances ($\text{thresh} = 10^{-4}$ for \textbf{glmnet}, $\text{RelTol} = 10^{-4}$ for \textbf{mlasso}, and a relative tolerance of $10^{-4}$ for \textbf{fista}). Right: Harsher tolerances ($\text{thresh} = 10^{-13}$ for \textbf{glmnet}, $\text{RelTol} = 10^{-8}$ for \textbf{mlasso}, and a relative tolerance of $10^{-8}$ for \textbf{fista}).}
    \label{fig:accuracy_2}
\end{figure}
\clearpage

\subsection{Run times comparisons}\label{subsec:numerics_runtimes}
\subsubsection{Computing regularization paths}
Table~\ref{tab:timings} shows the run times of \textbf{Algorithm~1}, \textbf{glmnet}, \textbf{mlasso}, and $\textbf{fista}$ for computing regularization paths. The regularization paths were generated using 512/1024 logarithmically spaced points for the dense/sparse matrices over $t \in \norminf{\Atop\bb} \times [10^{-4},1]$. We also included the point $\{0\}$ for \textbf{Algorithm~1}, which the other algorithms cannot handle. Since we wish to compute accurate regularization paths, we used harsh tolerances for each algorithm, just as in Section~\ref{subsec:numerics_accuracy} ($\text{thresh} = 10^{-13}$ for \textbf{glmnet}, $\text{RelTol} = 10^{-8}$ for \textbf{mlasso}, and a relative tolerance of $10^{-8}$ for \textbf{fista}). Table~\ref{tab:timings} shows that the overall winner is \textbf{Algorithm~1}; it is significantly faster than \textbf{fista} and \textbf{mlasso}, and slightly faster or comparable to \textbf{glmnet}.

\begin{table}[ht]
    \centering
    \begin{tabular}{|l|l|l|l|l|l}
    \hline
        Datasets & \textbf{fista} & \textbf{mlasso} & \textbf{glmnet} & \textbf{Algorithm 1}\\
        \hline
        \#~147 (d, HDR, ERC) & 7.15 & 1.63 & 0.87 & 0.65\\
        \hline
        \#~148 (d, HDR, extERC) & 14.46 & 1.60 & 0.86 & 0.69\\
        \hline
        \#~274 (d, HDR, noERC) & 88.70 & 4.16 & 1.01 & 1.22\\
        \hline
        \#~421 (d, LDR, ERC) & 17.13 & 1.57 & 0.88 & 0.58\\
        \hline
        \#~422 (d, LDR, extERC) & 31.69 & 1.62 & 0.89 & 0.60\\
        \hline
        \#~548 (d, LDR, noERC) & 524.21 & 6.64 & 1.82 & 1.06\\
        \hline
        \#~199 (s, HDR, ERC) & 52.49 & N/A & 1.62 & 0.65\\
        \hline
        \#~200 (s, HDR, extERC) & 195.86 & N/A & 1.61 & 1.07\\
        \hline
        \#~473 (s, LDR, ERC) & 479.12 & N/A & 1.57 & 0.50\\
        \hline
        \#~474 (s, LDR, extERC) & 645.76 & N/A & 1.67 & 0.98\\
        \hline
    \end{tabular}
    \caption{Timings (in seconds) for \textbf{Algorithm~1}, \textbf{glmnet}, \textbf{mlasso} and \textbf{fista}. Total times for 512/1024 logarithmically spaced points (d: dense/s: sparse), averaged over 5 runs. \label{tab:timings}}
\end{table}

\subsubsection{Computing optimal solutions when $t = 0$}
Table~\ref{tab:timings2} shows the run times of \textbf{Algorithm~1} with $t=0$ and using as input $-\bb/\norminf{\Atop\bb}$, and \textbf{Algorithm~1} with a regularization path. We used the ten aforementioned benchmark datasets from~\cite{lorenz2015solving}, and the regularization paths for \textbf{Algorithm~1} were generated using 512/1024 logarithmically spaced points for the dense/sparse matrices over $t \in \norminf{\Atop\bb} \times [10^{-4},1]$ and including $\{0\}$. We did not include MATLAB's native linear programming solver because we found that it was slow and often failed to converge. We also did not include commercial solvers for~\eqref{eq:BP_primal}because earlier work by Tendero et al.~\cite{tendero2021algorithm} did similar comparisons with a different version of \textbf{Algorithm~1} (valid for $t=0$ only) and found it superior in performance and accuracy.

Table~\ref{tab:timings2} shows that all the methods have similar running times. In addition, we've verified that all methods correctly computed the solution vectors provided in the benchmark datasets. \textbf{Algorithm~1} with regularization path is often slower because it used a large number of points.
\begin{table}[ht]
    \centering
    \begin{tabular}{|l|l|l|l|l|l}
    \hline
        Datasets  & \textbf{Alg. 1} (Reg. path to $t=0$) & \textbf{Alg. 1} (direct at $t=0$)\\
        \hline
        \#~147 (d, HDR, ERC)  & 0.65 & 0.019   \\
        \hline
        \#~148 (d, HDR, extERC)  & 0.69  & 0.041\\
        \hline
        \#~274 (d, HDR, noERC)  & 1.22  & 0.72 \\
        \hline
        \#~421 (d, LDR, ERC)  & 0.58 & 0.020  \\
        \hline
        \#~422 (d, LDR, extERC)  & 0.60 & 0.035  \\
        \hline
        \#~548 (d, LDR, noERC)  & 1.06 & 0.58  \\
        \hline
        \#~199 (s, HDR, ERC)  & 0.65 & 0.028 \\
        \hline
        \#~200 (s, HDR, extERC)  & 1.07 & 1.23 \\
        \hline
        \#~473 (s, LDR, ERC)  & 0.50 & 0.028  \\
        \hline
        \#~474 (s, LDR, extERC)  & 0.98 & 1.29 \\
        \hline
    \end{tabular}
    \caption{Timings (in seconds) for \textbf{Algorithm~1} with a regularization path and \textbf{Algorithm~1} with $t=0$ using $-\bb/\norminf{\Atop\bb}$ as input. Total times for 512/1024 logarithmically spaced points and including $\{0\}$ (d: dense/s: sparse), averaged over 5 runs. \label{tab:timings2}}
\end{table}
\section{Conclusion and future work}\label{sec:discussion}
In this work, we proved that a minimal selection principle from the theory of differential inclusions (Theorem~\ref{thm:eu_diff_inclusions}) enables one to compute an optimal solution of~\eqref{eq:lasso_dual} from the asymptotic limit of the slow system~\eqref{eq:bp_slow_ode}. As the results in Section~\ref{sec:msp} and~\ref{sec:results} show, the slow system can be integrated, yielding the slow solution~\eqref{eq:slow_global_solution}. The slow solution converges in finite time and, at convergence, yields the optimal solution to~\eqref{eq:lasso_dual}. From it, one can recover an optimal solution to~\eqref{eq:lasso_primal}. Taken together, these results yielded Algorithm~1. We also presented, in Section~\ref{sec:perturbations}, a detailed perturbation analysis of the slow system, including its local and global dependence on the hyperparameter and data. The global continuation of the slow solution provided a rigorous homotopy algorithm for the lasso problem. Our numerical experiments showed that Algorithm~1 vastly outperforms the state of the arts in accuracy while also achieving the best overall performance, highlighting its key feature that it neither compromises accuracy nor computational efficiency.

While this work focused on the lasso problem, our results yielded a novel solution method for solving a broad class of projected dynamical systems. We therefore expect that our results will be relevant to applications involving variational inequalities and projected dynamical systems. In addition, we expect that our results can be adapted to compute exact or approximate solutions to a broader class of convex polyhedral-constrained optimization problems. This includes: (i) variations of the lasso problem where the $\ell_{1}$ norm is replaced by a polyhedral norm or the function $\bx \mapsto \normone{\matr{M}\bx}$ for some appropriate real matrix $\matr{M}$, (ii) other $\ell_{1}$-regularized problems such as logistic regression, Poisson regression, support vector machines and boosting problems, and (iii) extensions to inequality constraints, i.e., linear and quadratic programmming. These problems will be investigated in future work.


\section*{Acknowledgements}
GPL would like to thank Prof.~Xiaodong Wang for useful discussions, his early support and encouragement on this project, and Prof. Georg Stadler and Dr. David K. A. Mordecai for their support and encouragement. This research is supported by ONR N00014-22-1-2667.

\section*{Conflict of interest}
The authors declare that they have no conflict of interest.

\appendix
\section{Mathematical background} \label{app:A}
We list here important definitions and technical results from convex and functional analysis used in this work. For comprehensive references, we refer the reader to~\cite{rockafellar1997convex,rockafellar2009variational,aubin2012differential,ekeland1999convex,hiriart2013convexI,hiriart2013convexII}. All vectors and matrices are denoted in bold typeface. Given an $m\times n$ real matrix $\matr{A}$ and a set of indices $\mathcal{E} \subset \{1,\dots,n\}$, we write $\matr{A}_{\eqset}$ to denote its $m\times|\eqset|$ submatrix with columns indexed by $\mathcal{E}$ and we write $\Atop_{\eqset}$ to denote its transpose. Similarly, given $\bu \in \Rn$, we write $\bu_{\eqset}$ to denote its $|\eqset|$-dimensional subvector indexed by $\eqset$. 

\begin{table}[ht]
    \centering
    \begin{tabular}{l|l}
    \hline
        Notation & Meaning \\
        \hline
        $\{\boldsymbol{e}_{1},\dots,\boldsymbol{e}_{n}\}$ & The set of $n$ canonical vectors of $\Rn$\\
        $\left\langle \bx_{1},\bx_{2} \right\rangle$ & The Euclidean scalar product of two vectors $\bx_{1},\bx_{2} \in \Rn$.\\ 
        $\interior{~C}$ & Interior of a nonempty subset $C$  \\
        $\ri~C$ & Interior of a nonempty subset $C$ relative to the affine hull of $C$ \\
        %
        $\chi_{C}$ & The characteristic function of a set C: \\
        & \quad $\chi_{C}(\bx) \coloneqq \begin{cases} 0, \; &\text{if}\;x \in C \\ +\infty \; &\text{otherwise}\end{cases}$ \\
        %
        %
        $\dom f$ & The domain of a function $f$: $\dom f \coloneqq \left\{ \bx \in \Rn : f(\bx) < +\infty\right\}$ \\
        $\dom \partial f$ & The set of points $\bx \in \dom f$ where the subdifferential $\partial f(\bx) \neq \varnothing$\\
        $f^{*}$& Convex conjugate of a function $f$: $f^{*}(\bs)\coloneqq\sup_{\bx \in \Rn}\left\{\left\langle \bs,\bx\right\rangle -f(\bx)\right\}$ \\
        $\proj_{C}(\bx)$ & Projection of $\bx \in \Rn$ onto a closed convex set $C \subset \Rn$: \\
        & \quad $\proj_{C}(\bx) \coloneqq \argmin_{\by\in C}\left\Vert \bx-\by\right\Vert _{2}^{2}$ \\
        \hline
    \end{tabular}    \label{tab:notation}
\end{table}


        
        

\subsection*{Definitions}
\begin{defn}[Convex sets] \label{def:convex_sets}
    A subset $C\subset \Rn$ is convex if for every pair $(\bx_{1},\bx_{2}) \in C \times C$ and every scalar $\lambda \in (0,1)$, the point $\lambda\bx_{1}+(1-\lambda)\bx_{2}$ is contained in $C$.
\end{defn}
\begin{defn}[Closed convex polyhedra]\label{def:convex_polyhedron}
    A nonempty set $C \subset \Rn$ is a closed convex polyhedron if it can be expressed as $C \coloneqq \left\{\bx \in \Rn : \langle\bu_{j},\bx\rangle \leqslant r_{j} \ \text{for every $j \in \{1,\dots,l\}$}  \right\}$, where $\{\bu_{1},\dots,\bu_{l}\} \subset \Rn$ and $\{r_{1},\dots,r_{l}\}\subset \R$.
\end{defn}
\begin{defn}[Convex cones]\label{def:convex_cones}
    A nonempty set $K \subset \Rn$ is a cone if $\boldsymbol{0} \in K$ and $\lambda\bx \in K$ for all $\bx \in K$ and $\lambda > 0$. A cone $K$ is convex if it contains the point $\sum_{j=1}^{k} \eta_{j}\bx_{j}$ whenever $\bx_{j} \in K$ and $\eta_{j} \geqslant 0$ for $j \in \{1,\dots,k\}$. 
\end{defn}
\begin{defn}[Conical hulls]\label{def:conical_hulls}
    The conical hull of $k$ vectors $\{\bx_{1},\dots,\bx_{k}\} \subset \Rn$ is defined as the closed convex cone
    \[
    \cone\{\bx_{1},\dots,\bx_{k}\} \coloneqq \left\{\sum_{j=1}^{k}\eta_{j}\bx_{j} : \eta_{j} \geqslant 0 \, \text{for every}\ j\in\{1,\dots,k\}\right\}.
    \]
\end{defn}
\begin{defn}[Polyhedral cones]
    A nonempty cone $K \subset \Rn$ is polyhedral if it can be expressed as $K = \cone\{\bx_{1},\dots,\bx_{k}\}$ for some finite collection of vectors $\{\bx_{1},\dots,\bx_{k}\} \subset \Rn$.
\end{defn}
\begin{defn}[Proper functions] \label{def:prop_f}
    A function $f$ defined over $\Rn$ is proper if its domain $\dom f \coloneqq \left\{ \bx \in \Rn : f(\bx) < +\infty\right\}$ is nonempty and $f(\bx) > -\infty$ for every $\bx \in \dom f$.
\end{defn}
\begin{defn}[Lower semicontinuity] \label{def:lsc_f}
    A proper function $f\colon\Rn\to\R\cup\{+\infty\}$ is lower semicontinuous at $\bx \in \dom f$ if for every sequence $\left\{ \bx_k\right\} _{k=1}^{+\infty} \subset \Rn$ converging to $\bx$, $\liminf_{k\to+\infty}f(\bx_k)\geqslant f(\bx)$. We say that $f$ is lower semicontinuous if it is lower semicontinuous at every $\bx \in \dom f$.
\end{defn}
\begin{defn}[Convex functions] \label{def:convex} 
A proper function $f\colon\Rn\to\R\cup\{+\infty\}$ is convex if its domain $\dom f$ is convex and if for every pair $(\bx_{1},\bx_{2}) \in \dom f \times \dom f$ and every $\lambda\in[0,1]$,
\[
f(\lambda\bx_{1}+(1-\lambda)\bx_{2})\leqslant\lambda f(\bx_{1})+(1-\lambda)f(\bx_{2}).
\]
It is \textit{strictly convex} if the inequality above is strict whenever $\bx_{1}\neq\bx_{2}$ and $\lambda \in (0,1)$, and it is \textit{$t$-strongly convex} with $t>0$ if for every pair $(\bx_{1},\bx_{2}) \in \dom f \times \dom f$ and every $\lambda\in[0,1]$, 
\[
f(\lambda\bx_{1}+(1-\lambda)\bx_{2})\leqslant\lambda f(\bx_{1})+(1-\lambda)f(\bx_{2})-\frac{t}{2}\lambda(1-\lambda)\normsq{\bx_{1}-\bx_{2}}.
\]
\end{defn}
\begin{defn}[The set $\gmRn$]\label{def:gmrn}
The space of proper, lower semicontinuous and convex functions over $\Rn$ is denoted by $\gmRn$.
\end{defn}
\begin{defn}[Differentiability] \label{def:differentiability}
Let $f$ be a proper function with $\interior{~\dom f} \neq \emptyset$. The function $f$ is differentiable at $\bx \in \interior{~\dom f}$ if there is $\bs \in \Rn$ such that for every $\bd \in \Rn$, $f'(\bx,\bd) = \left\langle \bs,\by\right\rangle$. If $\bs$ exists, then it is unique, it is called the gradient of $f$ at $\bx$, and it is written as $\bs \equiv \nabla f(\bx)$.
\end{defn}
\begin{defn}[Subdifferentiability and subgradients]\label{def:subgrad}
A function $f \in \gmRn$ is subdifferentiable at $\bx \in \Rn$ if there exists $\bs \in \Rn$ such that for every $\by \in \dom f$,
\begin{equation}\label{eq:conv_subdiff_char}
f(\by) \geqslant f(\bx) + \left\langle\bs,\by-\bx \right\rangle.
\end{equation}
In this case, $\bs$ is called a subgradient of the function $f$ at $\bx$. The set of subgradients at $\bx \in \Rn$ is called the subdifferential of $f$ at $\bx$ and is denoted by $\partial f(\bx)$. Moreover:
\begin{itemize}
    \item[(i)] When nonempty, the subdifferential of $f$ at $\bx$ is a closed convex set. The set of points $\bx \in \dom f$ for which $\partial f(\bx)$ is nonempty is denoted by $\dom \partial f$.
    \item[(ii)] The function $f$ has a unique subgradient at $\bx$ if and only if $f$ is differentiable at $\bx$ and $\partial f(\bx) = \{\nabla f(\bx)\}$~\cite[Proposition 5.3, page 23]{ekeland1999convex}.

    \item[(iii)] If $f$ is strictly convex, then the inequality in~\eqref{eq:conv_subdiff_char} is strict whenever $\bx \neq \by$. If $f$ is $t$-strongly convex with $t>0$, then $f$ is subdifferentiable at $\bx \in \Rn$ if there exists $\bs \in \Rn$ such that for every $\by \in \dom f$,
    \begin{equation}\label{eq:sc_subdiff_char}
    f(\by) \geqslant f(\bx) + \left\langle \bs,\by-\bx\right\rangle + \frac{t}{2}\normsq{\bx-\by}.
    \end{equation}
\end{itemize}
\end{defn}
\begin{defn}[Monotone and maximal monotone mappings]\label{def:mmm}
Let $F$ denote a set-valued mapping from $\Rn$ to $\Rn$ with \textit{graph} $\{(\bx,\bv) \in \Rn \times \Rn \colon \bv \in F(\bx)\}$. The set-valued mapping $F$ is \textit{monotone} if for every $\bx_{1},\bx_{2} \in \Rn$ and every $\bv_{1} \in F(\bx_{1})$, $\bv_{2} \in F(\bx_{2})$,
\begin{equation}\label{eq:monotone_inequality}
\langle\bv_{1}-\bv_{2},\bx_{1}-\bx_{2}\rangle \geqslant 0.
\end{equation}
It is \textit{maximal} if no other set-valued mapping $\tilde{F}$ contains strictly the graph of $F$.

If $f \in \gmRn$, then its subdifferential is a maximal monotone mapping~\cite[Theorem 12.17]{rockafellar2009variational}. If also $f$ is $t$-strongly convex with $t>0$, then for every $\bx_{1},\bx_{2} \in \Rn$ and every $\bv_{1} \in \partial f(\bx_{1})$, $\bv_{2} \in \partial f(\bx_{2})$ we have the stronger monotone inequality~\cite[Corollary 31.5.2]{rockafellar1997convex},
\begin{equation}\label{eq:stronger_monotone_inequality}
\langle\bv_{1}-\bv_{2},\bx_{1}-\bx_{2}\rangle \geqslant t\normsq{\bx_{1}-\bx_{2}}.
\end{equation}
\end{defn}
\begin{defn}[Descent directions]\label{def:descent_dir}
A vector $\bd \in \Rn$ is a descent direction for a proper function $f$ at $\bx \in \dom f$ if there exists $\tau > 0$ such that $\bx + \tau\bd \in \dom f$ and $f(\bx + \tau\bd) < f(\bx)$.
\end{defn}
\begin{defn}[Convex conjugates] \label{def:conv_conj}
Let $f \in \gmRn$. The convex conjugate $f^{*}\colon\Rn\to\mathbb{R}\cup\{+\infty\}$ of $f$ is defined by $f^{*}(\bs)\coloneqq\sup_{\bx \in \Rn}\left\{\left\langle \bs,\bx\right\rangle -f(\bx)\right\}$. In particular, $f^* \in \gmRn$~\cite[Definition 4.1]{ekeland1999convex}.
\end{defn}
\begin{defn}[Coercive functions]\label{def:coercive}
A proper function $f\colon\Rn\to\R\cup\{+\infty\}$ is coercive if for every sequence $\{\bx_k\}_{k=1}^{+\infty} \subset \Rn$ with $\lim_{k \to +\infty} \normtwo{\bx_k} = +\infty$, we have $\lim_{k \to +\infty} f(\bx_k) = +\infty$. If $f \in \gmRn$, then $f$ is coercive if and only if $\boldsymbol{0} \in \ri \, \dom f^{*}$~\cite[Theorem 11.8, page 479]{rockafellar2009variational}.
\end{defn}
\begin{defn}[Characteristic functions]\label{def:char_fun}
The characteristic function of a nonempty subset $C \subset \Rn$ is defined as
\[
\chi_{C}(\bx) = \begin{cases} 0, \; &\text{if}\;x \in C \\ +\infty \; &\text{otherwise}\end{cases}.
\]
We have $\chi_{C} \in \gmRn$ if and only if $C$ is nonempty, closed and convex, and it is coercive if and only if $C$ is bounded.
\end{defn}
\begin{defn}[Euclidean projection]\label{def:projection}
Let $C$ be a nonempty, closed and convex subset of $\Rn$ and let $\bx \in \Rn$. The projection of $\bx$ on $C$ is $\proj_{C}(\bx) \coloneqq \argmin_{\by \in C} \normtwo{\bx-\by}$. The projection of $\bx$ on $C$ exists, is unique, and satisfies the characterization
\begin{equation}\label{eq:char_projections}
\langle \bx - \proj_{C}(\bx), \by - \proj_{C}(\bx) \rangle \leqslant 0 \; \text{for every } \by \in C.
\end{equation}
See~\cite[Section III, 3.1]{hiriart2013convexI} for details.
\end{defn}
\begin{defn}[Normal cones]\label{def:normal_cones}
    Let $C$ be a nonempty closed convex set. The subdifferential of the characteristic function $\chi_{C}$ at $\bx \in C$ is the set of normal vectors
    \[
    \partial\chi_{C}(\bx) \coloneqq \left\{\bs \in \Rn : \langle\bs,\bx-\by\rangle \geqslant 0 \ \text{for every $\by \in C$} \right\}.
    \]
    This set is called the normal cone of $C$ at $\bx \in C$, and it is a closed convex cone.
\end{defn}
%
%
\subsection*{Technical results}
\begin{thm}[The generalized Fermat's rule]\label{thm:fermat}
Let $f \in \gmRn$. Then $f$ has a global minimum at $\bxsol$ if and only if $\boldsymbol{0} \in \partial f(\bxsol)$.    
\end{thm}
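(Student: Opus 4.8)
The plan is to observe that both implications follow immediately from unwrapping the definition of the subdifferential, Definition~\ref{def:subgrad}, specialized to the subgradient $\bs = \boldsymbol{0}$. The entire content of the statement is that the subgradient inequality~\eqref{eq:conv_subdiff_char} with $\bs = \boldsymbol{0}$ is literally the global minimality condition. No convex-analytic machinery beyond the definition is needed.

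First I would handle the reverse implication. Assume $\boldsymbol{0} \in \partial f(\bxsol)$; in particular $\bxsol \in \dom \partial f \subseteq \dom f$, so $f(\bxsol) < +\infty$, and since $f$ is proper $f(\bxsol) > -\infty$. Applying~\eqref{eq:conv_subdiff_char} with $\bs = \boldsymbol{0}$ gives, for every $\by \in \dom f$,
\[
f(\by) \geqslant f(\bxsol) + \langle \boldsymbol{0}, \by - \bxsol\rangle = f(\bxsol).
\]
For $\by \notin \dom f$ the inequality $f(\by) = +\infty \geqslant f(\bxsol)$ holds trivially, so $f(\by) \geqslant f(\bxsol)$ on all of $\Rn$, i.e., $\bxsol$ is a global minimizer of $f$.

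Next I would handle the forward implication by reversing the same computation. Assume $\bxsol$ is a global minimum of $f$. Then $f(\bxsol) \leqslant f(\by)$ for every $\by \in \Rn$, and properness forces $f(\bxsol)$ to be finite (otherwise $f \equiv +\infty$, contradicting $\dom f \neq \varnothing$). Rewriting the minimality inequality as
\[
f(\by) \geqslant f(\bxsol) = f(\bxsol) + \langle \boldsymbol{0}, \by - \bxsol\rangle \quad \text{for every } \by \in \dom f,
\]
we recognize exactly the subgradient characterization~\eqref{eq:conv_subdiff_char} with $\bs = \boldsymbol{0}$, so $\boldsymbol{0} \in \partial f(\bxsol)$.

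There is essentially no obstacle here; the result is a direct rephrasing of the subgradient inequality. The only point requiring a moment's care is the mismatch in quantifiers — the subdifferential definition ranges over $\by \in \dom f$ whereas global minimality ranges over all of $\Rn$ — but this is resolved trivially by noting that $f \equiv +\infty$ outside its domain, so the inequality extends automatically to points outside $\dom f$.
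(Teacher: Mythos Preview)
Your proof is correct and complete. The paper does not actually supply its own argument here; it simply cites \cite[Theorem~10.1, page~422]{rockafellar2009variational}, so your direct unpacking of Definition~\ref{def:subgrad} with $\bs = \boldsymbol{0}$ is exactly the standard elementary proof one would find behind that citation.
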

\begin{proof}
    See~\cite[Theorem 10.1, page 422]{rockafellar2009variational} for the proof.
\end{proof}
\begin{thm}[Fenchel's inequality]
Let $f \in \gmRn$. For every $\bx, \bs \in \Rn$, the function $f$ and its convex conjugate $f^{*}$ satisfy Fenchel's inequality:
\begin{equation} \label{eq:fenchel_ineq}
f(\bx)+f^{*}(\bs) \geqslant \left\langle \bs,\bx\right\rangle,
\end{equation}
with equality if and only if $\bs\in\partial f(\bx)$, if and only if $\bx\in\partial f^{*}(\bs)$.
\begin{proof}
See \cite[Corollary 1.4.4, page 48]{hiriart2013convexII}.
\end{proof}
\end{thm}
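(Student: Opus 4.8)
The plan is to derive the inequality~\eqref{eq:fenchel_ineq} directly from the definition of the convex conjugate (Definition~\ref{def:conv_conj}), and then to obtain the two equivalence conditions via the generalized Fermat's rule (Theorem~\ref{thm:fermat}) together with the biconjugation identity $f^{**} = f$, which holds because $f \in \gmRn$.

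First I would fix $\bx,\bs \in \Rn$ and note that the supremum defining $f^{*}(\bs)$ dominates the value of its argument at the particular choice $\by = \bx$:
\[
f^{*}(\bs) = \sup_{\by \in \Rn}\left\{\langle\bs,\by\rangle - f(\by)\right\} \geqslant \langle\bs,\bx\rangle - f(\bx).
\]
Rearranging yields~\eqref{eq:fenchel_ineq} at once. Moreover, equality holds precisely when the supremum is attained at $\by = \bx$, equivalently when $\bx$ is a global minimizer of the function $g(\by) \coloneqq f(\by) - \langle\bs,\by\rangle$, which again lies in $\gmRn$ since subtracting a continuous linear functional preserves properness, lower semicontinuity and convexity.

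Next I would apply the generalized Fermat's rule to $g$. Since $\by \mapsto -\langle\bs,\by\rangle$ is differentiable with gradient $-\bs$, the subdifferential satisfies $\partial g(\bx) = \partial f(\bx) - \bs$. Theorem~\ref{thm:fermat} then gives that $\bx$ minimizes $g$ if and only if $\boldsymbol{0} \in \partial g(\bx) = \partial f(\bx) - \bs$, that is, $\bs \in \partial f(\bx)$. Combined with the previous paragraph, this establishes the first equivalence: equality in~\eqref{eq:fenchel_ineq} holds if and only if $\bs \in \partial f(\bx)$.

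Finally, for the symmetric condition $\bx \in \partial f^{*}(\bs)$, I would invoke the Fenchel--Moreau biconjugation theorem, which for $f \in \gmRn$ yields $f^{**} = f$. Applying the equivalence just proved with the roles of $f$ and $f^{*}$ interchanged shows that equality in $f^{*}(\bs) + f^{**}(\bx) \geqslant \langle\bs,\bx\rangle$ holds if and only if $\bx \in \partial f^{*}(\bs)$; since $f^{**} = f$, the left-hand side equals $f^{*}(\bs) + f(\bx)$, so this is the identical equality condition, closing the chain of equivalences. The one step requiring genuine care---the main (though modest) obstacle---is the identity $f^{**} = f$, which relies essentially on $f$ being proper, lower semicontinuous and convex, i.e.\ on the hypothesis $f \in \gmRn$, and is the reason the equivalence can fail for arbitrary functions.
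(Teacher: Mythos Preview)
Your proof is correct and complete. The paper itself does not supply an argument here: it simply cites \cite[Corollary 1.4.4, page 48]{hiriart2013convexII}. You have instead written out the standard self-contained proof---deriving the inequality from the definition of $f^{*}$, characterizing the equality case via Theorem~\ref{thm:fermat}, and closing the loop with the biconjugation identity $f^{**}=f$. This is exactly the argument found in most convex-analysis texts (including, in essence, the cited reference), so there is no substantive difference in route to compare; you have merely unpacked what the paper leaves to citation. One small remark: the Fenchel--Moreau theorem $f^{**}=f$ that you invoke is not explicitly stated among the paper's background results in Appendix~\ref{app:A}, so if you wanted the proof to be fully internal to the paper you would need to add it; but as you correctly note, it is a standard consequence of $f\in\gmRn$ and poses no real difficulty.
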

\begin{prop}[The subdifferential of the characteristic function of a closed convex polyhedron]\label{prop:subdiff_ccp} Let $C \subset \Rn$ denote the nonempty, closed convex polyhedron 
\[C \coloneqq \left\{\bx \in \Rn : \langle\bu_{j},\bx\rangle \leqslant r_{j} \ \text{for every $j \in \{1,\dots,l\}$}  \right\},
\]
where $\{\bu_{1},\dots,\bu_{l}\} \subset \Rn$ and $\{r_{1},\dots,r_{l}\}\subset \R$. Let $\mathcal{E}(\bx) = \{j\in\{1,\dots,l\} : \langle\bu_{j},\bx\rangle = r_{j}\}$ denote the set of active constraints at $\bx \in \Rn$. The subdifferential of the characteristic function of $C$ at $\bx$ is the polyhedral cone
\[
\partial\chi_{C}(\bx) = \left\{\sum_{j \in \mathcal{E}(\bx)}\eta_{j}\bu_{j} : \eta_{j} \geqslant 0\right\}.
\]
\end{prop}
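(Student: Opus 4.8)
The plan is to reduce the claim to a statement about normal cones and then to settle the two resulting set inclusions, with the non-trivial direction resting on Farkas' lemma. Since $C$ is nonempty, closed and convex, Definition~\ref{def:char_fun} gives $\chi_C \in \gmRn$, and Definition~\ref{def:normal_cones} identifies $\partial\chi_C(\bx) = N_C(\bx) = \{\bs \in \Rn : \langle \bs, \bx - \by\rangle \geqslant 0 \text{ for every } \by \in C\}$ for $\bx \in C$. (For $\bx \notin C$ both the asserted formula and $\partial\chi_C(\bx) = \varnothing$ are vacuous, so I assume throughout that $\bx \in C$.) It then suffices to prove the two inclusions $\cone\{\bu_j : j \in \mathcal{E}(\bx)\} \subseteq N_C(\bx)$ and $N_C(\bx) \subseteq \cone\{\bu_j : j \in \mathcal{E}(\bx)\}$.

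The first inclusion is an elementary computation. Take any $\bs = \sum_{j \in \mathcal{E}(\bx)} \eta_j \bu_j$ with all $\eta_j \geqslant 0$. For any $\by \in C$ and any active index $j \in \mathcal{E}(\bx)$ we have $\langle \bu_j, \by\rangle \leqslant r_j = \langle \bu_j, \bx\rangle$, hence $\langle \bu_j, \bx - \by\rangle \geqslant 0$; summing these with the nonnegative weights $\eta_j$ yields $\langle \bs, \bx - \by\rangle \geqslant 0$, so $\bs \in N_C(\bx)$.

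The reverse inclusion is the crux, and I would split it into a geometric reduction followed by an application of Farkas' lemma. Fix $\bs \in N_C(\bx)$. First I would pass from conditions over all of $C$ to a conic condition on the \emph{active} normals, by showing that $\langle \bs, \bd\rangle \leqslant 0$ whenever $\langle \bu_j, \bd\rangle \leqslant 0$ for all $j \in \mathcal{E}(\bx)$. Indeed, given such a direction $\bd$, set $\by_\tau = \bx + \tau\bd$ with $\tau > 0$: for $j \in \mathcal{E}(\bx)$ one has $\langle \bu_j, \by_\tau\rangle = r_j + \tau\langle \bu_j, \bd\rangle \leqslant r_j$, while for $j \notin \mathcal{E}(\bx)$ the strict inequality $\langle \bu_j, \bx\rangle < r_j$ persists for all sufficiently small $\tau > 0$ by continuity; hence $\by_\tau \in C$ for small $\tau > 0$, and $\bs \in N_C(\bx)$ forces $0 \leqslant \langle \bs, \bx - \by_\tau\rangle = -\tau\langle \bs, \bd\rangle$, i.e. $\langle \bs, \bd\rangle \leqslant 0$.

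The final step is where the real content sits. The implication ``$\langle \bu_j, \bd\rangle \leqslant 0$ for all $j \in \mathcal{E}(\bx) \Rightarrow \langle \bs, \bd\rangle \leqslant 0$,'' valid for every $\bd \in \Rn$, holds if and only if $\bs$ is a nonnegative combination $\sum_{j \in \mathcal{E}(\bx)} \eta_j \bu_j$ of the active normals. This is precisely Farkas' lemma (equivalently, the Farkas--Minkowski--Weyl duality identifying the polar of the polyhedral cone $\{\bd : \langle \bu_j, \bd\rangle \leqslant 0,\ j \in \mathcal{E}(\bx)\}$ with the finitely generated cone $\cone\{\bu_j : j \in \mathcal{E}(\bx)\}$; see, e.g.,~\cite{rockafellar1997convex}), and it yields $\bs \in \cone\{\bu_j : j \in \mathcal{E}(\bx)\}$. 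I expect this invocation of Farkas' lemma to be the main (and only genuinely nontrivial) obstacle, everything else being direct. The degenerate case $\mathcal{E}(\bx) = \varnothing$ is handled automatically: the hypothesis then holds for every $\bd \in \Rn$, forcing $\bs = \boldsymbol{0}$, which agrees with the empty nonnegative combination.
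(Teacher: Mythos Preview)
Your argument is correct. The paper does not actually prove this proposition: its entire ``proof'' consists of the sentence ``This follows from Definitions~\ref{def:convex_polyhedron} and~\ref{def:normal_cones}. See~\cite[Example 5.2.6]{hiriart2013convexI} for details.'' In other words, the paper defers the work to Hiriart-Urruty and Lemar\'echal, whereas you supply a complete self-contained argument---identifying $\partial\chi_C(\bx)$ with the normal cone, checking the easy inclusion directly, reducing the hard inclusion to a tangent-direction condition via the perturbation $\by_\tau = \bx + \tau\bd$, and closing with Farkas' lemma. This is exactly the standard route (and essentially what the cited reference does), so there is no real divergence in mathematical content; you have simply written out what the paper outsources.
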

\begin{proof}
This follows from Definitions~\ref{def:convex_polyhedron} and~\ref{def:normal_cones}. See~\cite[Example 5.2.6]{hiriart2013convexI} for details.
\end{proof}
\begin{prop}[Properties of subdifferentials and some calculus rules]\label{prop:precomposition}
\noindent
\begin{itemize}
    \item[(i)] Let $f_{1},f_{2} \in \gmRn$ and assume $\ri \, \dom f_{1} \cap \ri \, \dom f_{2} \neq \emptyset$. Then $f_{1} + f_{2} \in \gmRn$ and for any $\bx \in \dom f_{1} \cap \dom f_{2}$, we have $\partial(f_{1} + f_{2})(\bx) = \partial f_{1}(\bx) + \partial f_{2}(\bx)$.

    \item[(ii)] Let $f_{1},f_{2} \in \gmRn$ and assume $\ri \, \dom f_{1} \cap \ri \, \dom f_{2} \neq \emptyset$. Then for any $\bs \in \dom f_{1}^{*} + \dom f_{2}^{*}$, we have $(f_{1} + f_{2})^{*}(\bs) = \inf_{\bs_{1} + \bs_{2} = \bs}\left\{f_{1}^{*}(\bs_{1}) + f_{2}^{*}(\bs_{2})\right\}$.

    \item[(iii)] Let $f \in \gmRn$, $g \in \gmRm$ and assume $\image{\matr{A}} \cap \ri \, \dom g \neq \emptyset$. Then the function $\bx \mapsto g(\matr{A}\bx)$ is in $\gmRn$, $\dom(g \circ \matr{A})^{*} = \matr{A}^{\top}\dom g^{*}$, and for all $\bx \in \Rn$ satisfying $\matr{A}\bx \in \dom g$ we have $\partial(g \circ \matr{A})(\bx) = \matr{A}^{\top}\partial g(\matr{A}\bx)$.
    
    \item[(iv)] Let $f \in \gmRn$, $g \in \gmRm$ and assume $\ri \, \dom f \cap \ri \, \dom (g \circ \matr{A}) \neq \emptyset$. Then for any $\bx \in \dom f \cap \dom (g\circ\matr{A})$, we have $\partial(f + g\circ\matr{A}) = \partial f(\bx) + \matr{A}^{\top}\partial g(\matr{A}\bx)$.
\end{itemize}
\begin{proof}
See~\cite[Corollary 3.1.2]{hiriart2013convexII} for the proof of (i),~\cite[Theorem 2.3.3]{hiriart2013convexII} for the proof of (ii), and~\cite[Theorem 2.2.1, Theorem 2.2.3, and Theorem 3.2.1.]{hiriart2013convexII} for the proof of (iii). The proof of (iv) follows by using (i) with $f_{1} = f$ and $f_{2} = g \circ \matr{A}$ and then using (iii).
\end{proof}
\end{prop}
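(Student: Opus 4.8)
The four statements are the standard Moreau--Rockafellar sum rule, the inf-convolution formula for conjugates, the linear chain rule, and their combination; the plan is to prove them in the order (i), (ii), (iii), (iv), isolating the single genuinely nontrivial step and deriving everything else from it. In each of (i) and (iii) one inclusion is immediate from the subgradient inequality~\eqref{eq:conv_subdiff_char}. For (i), if $\bs_1 \in \partial f_1(\bx)$ and $\bs_2 \in \partial f_2(\bx)$ then adding the inequalities $f_i(\by) \geqslant f_i(\bx) + \langle\bs_i,\by-\bx\rangle$ over $\by \in \dom f_1 \cap \dom f_2$ shows $\bs_1 + \bs_2 \in \partial(f_1+f_2)(\bx)$, giving $\supseteq$ without any constraint qualification. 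For (iii), if $\bw \in \partial g(\matr{A}\bx)$ then substituting $\by = \matr{A}\bz$ into $g(\by) \geqslant g(\matr{A}\bx) + \langle\bw,\by-\matr{A}\bx\rangle$ and using $\langle\bw,\matr{A}(\bz-\bx)\rangle = \langle\Atop\bw,\bz-\bx\rangle$ shows $\Atop\bw \in \partial(g\circ\matr{A})(\bx)$, again for free. The membership claims $f_1+f_2 \in \gmRn$ and $g\circ\matr{A}\in\gmRn$ are routine: convexity and lower semicontinuity are preserved under sums and under precomposition with the continuous affine map $\bx\mapsto\matr{A}\bx$, while properness follows because the relative-interior hypotheses guarantee a common point of finiteness.

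The substance is the reverse inclusion in (i), which I would obtain from the conjugate formula (ii); so I would prove (ii) first. Its easy half holds unconditionally: for any splitting $\bs = \bs_1+\bs_2$ and any $\bx$, writing $\langle\bs,\bx\rangle - (f_1+f_2)(\bx) = (\langle\bs_1,\bx\rangle - f_1(\bx)) + (\langle\bs_2,\bx\rangle - f_2(\bx)) \leqslant f_1^{*}(\bs_1) + f_2^{*}(\bs_2)$, then taking the supremum over $\bx$ and the infimum over splittings, gives $(f_1+f_2)^{*}(\bs) \leqslant \inf_{\bs_1+\bs_2=\bs}\{f_1^{*}(\bs_1)+f_2^{*}(\bs_2)\}$. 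The reverse inequality together with attainment of the infimum is the crux, and is exactly where $\ri\,\dom f_1 \cap \ri\,\dom f_2 \neq \emptyset$ enters. The plan is a separation argument in $\Rn\times\R$: fixing $\bs$ and setting $\gamma = (f_1+f_2)^{*}(\bs)$, I would separate the strict epigraph $\{(\bx,\alpha) : f_1(\bx) - \langle\bs,\bx\rangle + \gamma < \alpha\}$ from the hypograph-type set $\{(\bx,\beta): \beta \leqslant -f_2(\bx)\}$, which are disjoint precisely by the definition of $\gamma$. The relative-interior hypothesis is what forces the separating hyperplane to be non-vertical, so it may be normalized to the form $(\bs_2,-1)$; reading off the two resulting inequalities identifies $\bs_2 \in \dom f_2^{*}$ and $\bs_1 \coloneqq \bs - \bs_2 \in \dom f_1^{*}$ with $f_1^{*}(\bs_1) + f_2^{*}(\bs_2) \leqslant \gamma$, which both proves equality and exhibits a minimizing split.

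With (ii) in hand, the reverse inclusion in (i) follows from Fenchel's equality conditions~\eqref{eq:fenchel_ineq}: if $\bs \in \partial(f_1+f_2)(\bx)$ then $(f_1+f_2)(\bx) + (f_1+f_2)^{*}(\bs) = \langle\bs,\bx\rangle$, and expanding the conjugate by (ii) with a minimizing split $\bs = \bs_1+\bs_2$ and comparing against the two Fenchel inequalities $f_i(\bx) + f_i^{*}(\bs_i) \geqslant \langle\bs_i,\bx\rangle$ forces both to be equalities, whence $\bs_i \in \partial f_i(\bx)$ and $\subseteq$. For (iii) I would run the same programme with the composition: the conjugate formula $(g\circ\matr{A})^{*}(\bs) = \inf\{g^{*}(\bw) : \Atop\bw = \bs\}$, with attainment under $\image{\matr{A}}\cap\ri\,\dom g\neq\emptyset$, is proved by the analogous separation argument and immediately yields $\dom(g\circ\matr{A})^{*} = \Atop\dom g^{*}$; the reverse inclusion $\partial(g\circ\matr{A})(\bx)\subseteq\Atop\partial g(\matr{A}\bx)$ then follows from Fenchel equality exactly as above. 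Finally (iv) is a corollary: the hypothesis $\ri\,\dom f \cap \ri\,\dom(g\circ\matr{A}) \neq \emptyset$ forces $\image{\matr{A}}\cap\ri\,\dom g\neq\emptyset$, since $\ri\,\dom(g\circ\matr{A}) = \{\bx : \matr{A}\bx\in\ri\,\dom g\}$, so (iii) applies to give $g\circ\matr{A}\in\gmRn$ and $\partial(g\circ\matr{A})(\bx) = \Atop\partial g(\matr{A}\bx)$; applying (i) with $f_1 = f$ and $f_2 = g\circ\matr{A}$ then yields $\partial(f + g\circ\matr{A})(\bx) = \partial f(\bx) + \Atop\partial g(\matr{A}\bx)$. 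The main obstacle throughout is the single separation step establishing exactness of the conjugate infimum; once that non-vertical-hyperplane argument is secured under the relative-interior qualification, everything else is bookkeeping through Fenchel's equality.
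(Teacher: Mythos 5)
Your treatment of items (i)--(iii) is correct and is essentially the standard textbook argument: the easy inclusions follow from the subgradient inequality~\eqref{eq:conv_subdiff_char}, exactness of the conjugate formulas is obtained from a non-vertical separating hyperplane in $\Rn\times\R$ under the relative-interior qualification, and the hard inclusions then drop out of Fenchel's equality~\eqref{eq:fenchel_ineq}. The paper itself does not prove these items; it cites~\cite{hiriart2013convexII}. Your proposal is a self-contained reconstruction of what those references contain, organized in the same logical order (conjugate formula first, subdifferential rules as corollaries), so for (i)--(iii) there is nothing to object to.

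Item (iv), however, contains a genuine gap. You claim that $\ri\,\dom f \cap \ri\,\dom(g\circ\matr{A}) \neq \emptyset$ forces $\image{\matr{A}} \cap \ri\,\dom g \neq \emptyset$, ``since $\ri\,\dom(g\circ\matr{A}) = \{\bx : \matr{A}\bx \in \ri\,\dom g\}$.'' That identity for the relative interior of a preimage is valid only under the qualification $\matr{A}^{-1}(\ri\,\dom g) \neq \emptyset$ (see~\cite[Theorem 6.7]{rockafellar1997convex}), which is precisely what you are trying to derive; the argument is circular. Moreover, the implication is false. Take $m = 2$, $n = 1$, $\matr{A}x = (x,0)$, $f \equiv 0$, and let $g(u,v) = -\sqrt{uv}$ on the nonnegative quadrant and $+\infty$ outside, so that $g \in \Gamma_{0}(\R^{2})$. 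Then $g\circ\matr{A} = \chi_{[0,\infty)}$, hence $\ri\,\dom(g\circ\matr{A}) = (0,\infty)$ meets $\ri\,\dom f = \R$, yet $\image{\matr{A}} \cap \ri\,\dom g = \emptyset$. Worse, the conclusion of (iv) itself fails at $x = 0$: one computes $\partial g((0,0)) = \{(a,b) : a<0,\ b<0,\ 4ab \geqslant 1\}$, so $\partial f(0) + \Atop\partial g((0,0)) = (-\infty,0)$, whereas $\partial(f + g\circ\matr{A})(0) = \partial\chi_{[0,\infty)}(0) = (-\infty,0]$. So no argument can close this step: statement (iv) as written requires its hypothesis to be strengthened to ``there exists $\bx \in \ri\,\dom f$ with $\matr{A}\bx \in \ri\,\dom g$.''

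In fairness, the paper's own one-line proof of (iv) --- apply (i) with $f_{2} = g\circ\matr{A}$ and then apply (iii) --- commits the same oversight, since the hypothesis of (iii) is never verified from that of (iv). Under the strengthened hypothesis both your argument and the paper's become correct, and that stronger condition is what is actually available wherever the paper invokes (iv): assumption~\eqref{eq:assumptions_eu} in Theorem~\ref{thm:primal_dual_prob} supplies a point $\bx \in \ri\,\dom f_{1}$ with $\matr{A}\bx \in \ri\,\dom f_{2}$, and in the BPDN application one even has $\dom f_{1} = \Rn$.
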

\begin{thm}[The primal problem and the dual problem]\label{thm:primal_dual_prob}
Let $f_{1} \in \gmRn$, $f_{2} \in \gmRm$ and let $\matr{A}$ denote a real $m \times n$ matrix. Consider the ``primal" minimization problem
\begin{equation}\label{eq:fact:primal_prob}
\inf_{\bx \in \Rn} \left\{f_{1}(\bx) + f_{2}(\matr{A}\bx)\right\}.
\end{equation}
Assume
\begin{equation}\label{eq:assumptions_eu}
\boldsymbol{0} \in \ri(\matr{A}~\dom f_{1} - \dom f_{2}) \quad \text{and} \quad \boldsymbol{0} \in \ri(\dom f_{1}^{*} + \matr{A}^{\top}\dom f_{2}^{*}).
\end{equation}
Then:
\begin{itemize}
    \item[(i)] The primal problem~\eqref{eq:fact:primal_prob} has at least one solution. If $\bxsol$ denote such a solution, then it satisfies the first-order optimality condition
    \begin{equation}\label{eq:fo_primal}
     \boldsymbol{0} \in \partial(f_{1} + f_{2}\circ \matr{A})(\bxsol) \quad \iff \quad \boldsymbol{0} \in \partial f_{1}(\bxsol) + \matr{A}^{\top}\partial f_{2}(\matr{A}\bxsol).
    \end{equation}
    \item[(ii)] The ``dual" maximization problem
\begin{equation}\label{eq:fact:dual_prob}
\sup_{\bp \in \Rm} \left\{-f_{1}^{*}(-\Atop\bp)- f_{2}^{*}(\bp) \right\}
\end{equation}
has at least one solution and is equal in value to the primal problem~\eqref{eq:fact:primal_prob}. If $\bpsol$ denote such a solution, then it satisfies the first-order optimality condition
\begin{equation}\label{eq:fo_dual}
    \boldsymbol{0} \in \partial(f_{1}^{*}\circ(-\matr{A}^{\top}) + f_{2}^{*})(\bpsol)\quad \iff \quad \boldsymbol{0} \in  -\matr{A}\partial f_{1}^{*}(-\matr{A}^{\top}\bp^s) + \partial f_{2}^{*}(\bp^s) .
\end{equation}
\item[(iii)] The optimality conditions~\eqref{eq:fo_primal} and~\eqref{eq:fo_dual} are equivalent and can be written as
\begin{equation}\label{eq:fo_joined}
    \bpsol \in \partial f_{2}(\matr{A}\bxsol) \ \text{and} \ -\matr{A}^\top\bpsol \in \partial f_{1}(\bxsol).
\end{equation}
\end{itemize}
\begin{proof}
See~\cite[Theorem 31.2 and Corollary 31.2.1]{rockafellar1997convex} and \cite[Proposition 1, Theorem 1 and Theorem 2, pages 163--167]{aubin2012differential} for a proof. The equivalences in~\eqref{eq:fo_primal} and~\eqref{eq:fo_dual} follow from~\eqref{eq:assumptions_eu} and the rules in Proposition~\ref{prop:precomposition}. The equivalence in~\eqref{eq:fo_joined} follows from Fenchel's inequality~\eqref{eq:fenchel_ineq}.
\end{proof}
\end{thm}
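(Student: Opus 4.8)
The plan is to prove the theorem in the logical order weak duality $\to$ strong duality with dual attainment (from the first qualification) $\to$ primal attainment (from the second qualification, by symmetry) $\to$ optimality conditions and their equivalences. First I would record weak duality: for any $\bx \in \Rn$ and $\bp \in \Rm$, Fenchel's inequality~\eqref{eq:fenchel_ineq} applied to $f_{1}$ at the pair $(\bx,-\Atop\bp)$ and to $f_{2}$ at $(\matr{A}\bx,\bp)$ gives $f_{1}(\bx) + f_{1}^{*}(-\Atop\bp) \geqslant -\langle\bp,\matr{A}\bx\rangle$ and $f_{2}(\matr{A}\bx) + f_{2}^{*}(\bp) \geqslant \langle\bp,\matr{A}\bx\rangle$; adding them yields $f_{1}(\bx) + f_{2}(\matr{A}\bx) \geqslant -f_{1}^{*}(-\Atop\bp) - f_{2}^{*}(\bp)$, so the primal value $p$ dominates the dual value $d$.

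The heart of the argument is strong duality with dual attainment. I would introduce the convex value function
\[
h(\bu) = \inf_{\bx \in \Rn}\left\{f_{1}(\bx) + f_{2}(\matr{A}\bx + \bu)\right\}, \qquad \bu \in \Rm,
\]
so that $p = h(\boldsymbol{0})$ and $\dom h = \dom f_{2} - \matr{A}\,\dom f_{1}$. Computing the conjugate by substituting $\bv = \matr{A}\bx + \bu$ and separating the suprema in $\bv$ and $\bx$ gives $h^{*}(\bp) = f_{1}^{*}(-\Atop\bp) + f_{2}^{*}(\bp)$, whence $\sup_{\bp}\{-h^{*}(\bp)\} = h^{**}(\boldsymbol{0}) = d$. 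The first qualification in~\eqref{eq:assumptions_eu} says exactly that $\boldsymbol{0} \in \ri(\dom h)$; since a proper convex function is subdifferentiable throughout the relative interior of its domain, $\partial h(\boldsymbol{0}) \neq \varnothing$ and hence $h(\boldsymbol{0}) = h^{**}(\boldsymbol{0})$. This proves $p = d$, and every $\bp \in \partial h(\boldsymbol{0})$ satisfies $h^{*}(\bp) = -h(\boldsymbol{0})$, i.e.\ attains the dual supremum, so the dual problem~\eqref{eq:fact:dual_prob} has a solution. To obtain primal attainment I would invoke symmetry: the dual problem is itself of the template~\eqref{eq:fact:primal_prob} under the substitutions $f_{1} \mapsto f_{2}^{*}$, $f_{2} \mapsto f_{1}^{*}$, $\matr{A} \mapsto -\Atop$, and under this identification the second qualification in~\eqref{eq:assumptions_eu} is precisely the relative-interior condition needed to rerun the value-function argument on the dual. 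This yields attainment of the bidual infimum, which is the original primal problem, producing a primal solution $\bxsol$.

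Finally I would read off the optimality conditions. With $p = d$ attained at $\bxsol$ and $\bpsol$, the two weak-duality inequalities must hold with equality; by the equality case of Fenchel's inequality~\eqref{eq:fenchel_ineq} this is equivalent to $-\Atop\bpsol \in \partial f_{1}(\bxsol)$ and $\bpsol \in \partial f_{2}(\matr{A}\bxsol)$, which is~\eqref{eq:fo_joined}. The calculus rules of Proposition~\ref{prop:precomposition}(iv)---valid because~\eqref{eq:assumptions_eu} guarantees $\ri\,\dom f_{1} \cap \ri\,\dom(f_{2}\circ\matr{A}) \neq \varnothing$---convert the joined condition into $\boldsymbol{0} \in \partial f_{1}(\bxsol) + \Atop\partial f_{2}(\matr{A}\bxsol) = \partial(f_{1} + f_{2}\circ\matr{A})(\bxsol)$, which is~\eqref{eq:fo_primal}; the analogous rules applied to the dual objective give~\eqref{eq:fo_dual}. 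The converse, that~\eqref{eq:fo_joined} forces optimality, follows by running weak duality backwards: equality in both Fenchel inequalities squeezes $p \leqslant f_{1}(\bxsol)+f_{2}(\matr{A}\bxsol) = -f_{1}^{*}(-\Atop\bpsol)-f_{2}^{*}(\bpsol) \leqslant d \leqslant p$, so all quantities coincide and both points are optimal.

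The main obstacle is the strong-duality step. One must verify that $h$ is proper---in particular $h(\boldsymbol{0}) > -\infty$, which follows from weak duality once the second qualification rules out $d = -\infty$ by exhibiting $\bp$ with $f_{1}^{*}(-\Atop\bp)$ and $f_{2}^{*}(\bp)$ both finite---correctly identify $\dom h = \dom f_{2} - \matr{A}\,\dom f_{1}$ and match it against the relative-interior hypothesis, and justify that $\boldsymbol{0} \in \ri(\dom h)$ upgrades mere convexity of $h$ to subdifferentiability at $\boldsymbol{0}$, equivalently to the closure identity $h(\boldsymbol{0}) = h^{**}(\boldsymbol{0})$. Coordinating the two qualifications so that \emph{both} attainments hold simultaneously, rather than only strong duality, is the delicate bookkeeping.
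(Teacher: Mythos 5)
Your proposal is correct, and it differs from the paper in an essential way: the paper does not prove the existence/strong-duality core at all, but delegates it to \cite[Theorem 31.2 and Corollary 31.2.1]{rockafellar1997convex} and \cite[pages 163--167]{aubin2012differential}, only sketching how the optimality conditions follow. You instead supply a self-contained argument via the perturbation (value-function) method: weak duality from two applications of Fenchel's inequality; then $h(\bu) = \inf_{\bx}\{f_{1}(\bx)+f_{2}(\matr{A}\bx+\bu)\}$ with $h^{*}(\bp)=f_{1}^{*}(-\Atop\bp)+f_{2}^{*}(\bp)$, so that the first qualification $\boldsymbol{0}\in\ri(\matr{A}\,\dom f_{1}-\dom f_{2}) = -\ri(\dom h)$ gives $\partial h(\boldsymbol{0})\neq\varnothing$, hence $h(\boldsymbol{0})=h^{**}(\boldsymbol{0})$ (strong duality) with every element of $\partial h(\boldsymbol{0})$ a dual maximizer; and then primal attainment by rerunning the same argument on the dual, where you correctly verify that the second qualification is exactly the first-type qualification for the dual viewed through $f_{1}\mapsto f_{2}^{*}$, $f_{2}\mapsto f_{1}^{*}$, $\matr{A}\mapsto-\Atop$, using $f_{i}^{**}=f_{i}$ to identify the bidual with the primal. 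This is essentially the classical proof underlying the cited results, so nothing is mathematically novel, but it makes transparent the division of labor between the two qualifications (the first buys strong duality and dual attainment, the second buys primal attainment), which the theorem statement itself leaves opaque. Your endgame coincides with the paper's remarks: the equality case of Fenchel's inequality yields~\eqref{eq:fo_joined}, and Proposition~\ref{prop:precomposition}(iv) (whose hypothesis $\ri\,\dom f_{1}\cap\ri\,\dom(f_{2}\circ\matr{A})\neq\varnothing$ does follow from the first qualification via the relative-interior calculus $\ri(C-D)=\ri C-\ri D$ and $\ri(\matr{A}^{-1}C)=\matr{A}^{-1}\ri C$ when nonempty) converts it into~\eqref{eq:fo_primal} and~\eqref{eq:fo_dual}; the reverse implication by squeezing the weak-duality chain is also right. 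The only points where a referee would ask you to expand are the routine verifications you flag yourself: properness of $h$ (ruled out from taking $-\infty$ by weak duality plus dual feasibility from the second qualification), and the citation-level facts that a proper convex function is subdifferentiable on the relative interior of its domain and that subdifferentiability at $\boldsymbol{0}$ forces $h(\boldsymbol{0})=h^{**}(\boldsymbol{0})$.
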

\begin{rem}\label{rem:coercivity}
The assumptions in~\eqref{eq:assumptions_eu} read: there exists $\bx \in \dom f_{1}$ such that $\matr{A}\bx \in \ri \ \dom f_{2}$ and there exists $\bp \in \dom f_{2}^{*}$ such that $-\Atop\bp \in \dom f_{1}^{*}$. If the first assumption holds, then Proposition~\ref{prop:precomposition} implies the function $\bx \mapsto f_{1}(\bx) + f_{2}(\matr{A}\bx)$ is in $\gmRn$. Moreover, 
\[
\ri \, \dom (f_{1} + (f_{2} \circ \matr{A}))^{*} = \ri(\dom f_{1}^{*} + \dom(f_{2} \circ \matr{A})^{*})
= \ri(\dom f_{1}^{*} + \matr{A}^{\top}\dom f_{2}^{*}).
\] 
In particular, the second assumption becomes equivalent to coercivity of the function 
$\bx \mapsto f_{1}(\bx) + f_{2}(\matr{A}\bx)$ (see Definition~\ref{def:coercive}). 
\end{rem}
%
	
\section{Technical Proofs}\label{app:detailed_proofs}
\subsection{Proof of Proposition~\ref{prop:descent_direction}}\label{app:proof_descent}
\textbf{Part 1.} First, we show there is $\Delta_{*}(\bpo;t,\bb) > 0$ such that $\norminf{-\Atop(\bpo + \Delta\bd(\bpo;t,\bb))} \leqslant 1$ for every $\Delta \in [0,\Delta_{*}(\bpo;t,\bb)]$. Multiply the vector $-\Atop(\bpo + \Delta\bd(\bpo;t,\bb))$ by the matrix of signs $\matr{D}(\bpo)$ and take the inner product with respect to the unit vector $\be_{j}$ with $j \in \{1,\dots,n\}$:
\begin{equation}\label{eq:app_part_case1}
\langle-\matr{D}(\bpo)\Atop(\bpo + \Delta\bd(\bpo;t,\bb)),\be_j\rangle = \langle -\matr{D}(\bpo)\Atop\bpo,\be_j\rangle - \Delta\langle \matr{D}(\bpo)\Atop\bd(\bpo;t,\bb),\be_j\rangle.
\end{equation}
By definition of the equicorrelation set~\eqref{eq:equicorrelation_set} and the matrix of signs $\matr{D}(\bpo)$, we have
\[
0 \leqslant \langle -\matr{D}(\bpo)\Atop\bpo,\be_j\rangle \leqslant 1 \ \text{for every $j \in \{1,\dots,n\}$.}
\]
We now proceed according to whether $j \in \epo$ or $j \in \ecpo$.

First, suppose $j \in \epo$. Then $\langle -\matr{D}(\bpo)\Atop\bpo,\be_j\rangle = 1$ and $\langle \matr{D}(\bpo)\Atop\bd(\bpo;t,\bb),\be_j\rangle \geqslant 0$ by the KKT condition~\eqref{eq:nnls_problem_oc_2}. If the latter is strictly positive, then we can increase $\Delta$ until the left-hand-side of~\eqref{eq:app_part_case1} is equal to $-1$. The smallest such number is
\[
\Delta_{\epo} = \min_{j \in \epo} 2/\langle \matr{D}(\bpo)\Atop\bd(\bpo;t,\bb),\be_j\rangle,
\]
which may be the extended value $\{+\infty\}$ if $\langle \matr{D}(\bpo)\Atop\bd(\bpo;t,\bb),\be_j\rangle = 0$ for every $j \in \epo$.

Next, suppose $j \in \ecpo$ and $\langle \matr{D}(\bpo)\Atop\bd(\bpo;t,\bb),\be_j\rangle \geqslant 0$. The same reasoning as above shows
\[
\Delta_{\ecpo,\geqslant} = \min_{\substack{j \in \ecpo\\ \langle \matr{D}(\bpo)\Atop\bd(\bpo;t,\bb),\be_j\rangle \geqslant 0}} \frac{1 - \langle \matr{D}(\bpo)\Atop\bpo,\be_j\rangle}{\langle \matr{D}(\bpo)\Atop\bd(\bpo;t,\bb),\be_j\rangle}
\]
is the smallest number for which $\langle \matr{D}(\bpo)\Atop\bd(\bpo;t,\bb),\be_j\rangle \geqslant 0$ among $j \in \ecpo$, which may be the extended value $\{+\infty\}$ if $\langle \matr{D}(\bpo)\Atop\bd(\bpo;t,\bb),\be_j\rangle = 0$ for every $j \in \ecpo$.

Finally, suppose $j \in \ecpo$ and $\langle \matr{D}(\bpo)\Atop\bd(\bpo;t,\bb),\be_j\rangle < 0$. Then we can increase $\Delta$ until the left-hand-side of~\eqref{eq:app_part_case1} is equal to $1$. The smallest such number is
\[
\Delta_{\ecpo,<} = \min_{\substack{j \in \ecpo\\ \langle \matr{D}(\bpo)\Atop\bd(\bpo;t,\bb),\be_j\rangle < 0}} \frac{1 + \langle \matr{D}(\bpo)\Atop\bpo,\be_j\rangle}{|\langle \matr{D}(\bpo)\Atop\bd(\bpo;t,\bb),\be_j\rangle|}.
\]

Hence we find $\norminf{-\Atop(\bpo + \Delta\bd(\bpo;t,\bb))} \leqslant 1$ for all $\Delta \in [0,\Delta_{*}(\bpo;t,\bb)]$, where 
\[
\begin{alignedat}{1}
\Delta_{*}(\bpo;t,\bb) &\coloneqq \min{(\Delta_{\epo},\Delta_{\ecpo,\geqslant},\Delta_{\ecpo,<})} \\
&\equiv \min_{j \in \{1,\dots,n\}} \left\{\frac{\sign{\left\langle \matr{D}(\bpo)\Atop\bd(\bpo;t,\bb),\be_j\right\rangle} - \left\langle \matr{D}(\bpo)\Atop\bpo,\be_j\right\rangle)}{\left\langle\matr{D}(\bpo) \Atop\bd(\bpo;t,\bb),\be_j\right\rangle} \right\}.
\end{alignedat}
\]

Now, for the equivalence, note the assumption $\rank{\matr{A}} = m$ implies $\Atop\bd(\bpo;t,\bb) = \boldsymbol{0} \iff \bd(\bpo;t,\bb) = \boldsymbol{0}$. Clearly $\bd(\bpo;t,\bb) = \boldsymbol{0}$ implies $\Delta_{*}(\bpo;t,\bb) = +\infty$. Finally, $\Delta_{*}(\bpo;t,\bb) = +\infty$ only if $\Atop\bd(\bpo;t,\bb) = \boldsymbol{0}$, which is equivalent to $\bd(\bpo;t,\bb) = \boldsymbol{0}$.

\noindent
\newline
\textbf{Part 2.} Using Part 1, we find $V(\bpo + \Delta\bd(\bpo;t,\bb); t,\bb) < +\infty$ for every $\Delta \in [0,\Delta_{*}(\bpo;t,\bb)]$. In particular, we can write
\[
\begin{alignedat}{1}
V(\bpo + \Delta\bd(\bpo;t,\bb); t,\bb) &= \frac{t}{2}\normsq{\bpo + \Delta\bd(\bpo;t,\bb)} + \langle\bb,\bpo + \Delta\bd(\bpo;t,\bb)\rangle \\
&= t \Delta\langle\bpo,\bd(\bpo;t,\bb)\rangle + \Delta^{2}t \normsq{\bd(\bpo;t,\bb)}/2 \\
&\qquad\qquad + \Delta\langle\bb,\bd(\bpo;t,\bb)\rangle + \frac{t}{2}\normsq{\bpo} + \langle\bb,\bpo\rangle \\
&= t \Delta\langle\bpo,\bd(\bpo;t,\bb)\rangle + \Delta^{2}t \normsq{\bd(\bpo;t,\bb)}/2 \\
&\qquad \qquad + \Delta\langle\bb,\bd(\bpo;t,\bb)\rangle + V(\bpo;t,\bb).
\end{alignedat}
\]
Substituting~\eqref{eq:nnls_problem_oc_3} in the above and rearranging yields
\[
V(\bpo + \Delta\bd(\bpo;t,\bb); t,\bb) - V(\bpo;t,\bb) = \Delta(t \Delta/2 - 1)\normsq{\bd(\bpo;t,\bb)}.
\]
This proves Equation~\eqref{eq:V_formula_descent}. Finally, suppose $\bd(\bpo;t,\bb) \neq \boldsymbol{0}$, meaning $0 < \Delta_{*}(\bpo;t,\bb) < +\infty$. Taking $\Delta \in (0,\min(\Delta_{*}(\bpo;t,\bb),2/t))$ in~\eqref{eq:V_formula_descent} yields $V(\bpo + \Delta\bd(\bpo;t,\bb); t,\bb) - V(\bpo;t,\bb) < 0$. Hence $\bd(\bpo;t,\bb)$ is a descent direction.

\noindent
\newline
\textbf{Part 3.} Let $\Delta \in [0,\Delta_{*}(\bpo;t,\bb)]$ and $\bq \in \Rm$. Since $-\bd(\bpo;t,\bb) \in \partial_{\bp} V(\bpo;t,\bb)$ and $V(\cdot;t,\bb)$ is $t$-strongly convex, the subdifferentiability property implies
\[
\begin{alignedat}{1}
V(\bq;t,\bb) &\geqslant V(\bpo;t,\bb) + \langle-\bd(\bpo;t,\bb), \bq - \bpo\rangle + \frac{t}{2}\normsq{\bq-\bpo}\\
&= V(\bpo;t,\bb) + \langle-\bd(\bpo;t,\bb),\bq - (\bpo + \Delta\bd(\bpo;t,\bb))\rangle + \langle-\bd(\bpo;t,\bb),\Delta\bd(\bpo;t,\bb)\rangle \\
&\qquad + \frac{t}{2}\normsq{\bq - (\bpo + \Delta\bd(\bpo;t,\bb)) + \Delta\bd(\bpo;t,\bb)} \\
&= V(\bpo;t,\bb) + \langle-\bd(\bpo;t,\bb),\bq - (\bpo + \Delta\bd(\bpo;t,\bb))\rangle - \Delta\normsq{\bd(\bpo;t,\bb)}\\
&\qquad + \frac{t}{2}\normsq{\bq - (\bpo + \Delta\bd(\bpo;t,\bb))} + \frac{\Delta^2 t}{2}\normsq{\bd(\bpo;t,\bb)} \\
&\qquad\qquad + t \Delta\langle\bd(\bpo;t,\bb),\bq - (\bpo + \Delta\bd(\bpo;t,\bb))\rangle. \\
\end{alignedat}
\]
Use Equation~\eqref{eq:V_formula_descent} in the inequality above to simplify the right hand side:
\[
\begin{alignedat}{1}
V(\bq;t,\bb) &\geqslant V(\bpo + \Delta\bd(\bpo;t,\bb);t,\bb) + \langle-(1-t \Delta)\bd(\bpo;t,\bb),\bq - (\bpo + \Delta\bd(\bpo;t,\bb))\rangle \\
&\qquad\qquad + \frac{t}{2}\normsq{\bq - (\bpo + \Delta\bd(\bpo;t,\bb))}.
\end{alignedat}
\]
By definition of subdifferentiability, this means
\[
-(1-t \Delta)\bd(\bpo;t,\bb) \in \partial_{\bp} V(\bpo + \Delta\bd(\bpo;t,\bb);t,\bb) \ \text{for every $\Delta \in [0,\Delta_{*}(\bpo;t,\bb)]$}.
\]

\noindent
\newline
\textbf{Part 4.} The inclusions and identity follow for $\Delta = 0$, so suppose $\Delta \in (0,\Delta_{*}(\bpo;t,\bb))$. Assume $j \notin \epo$. By construction from Part 1, we have $j \notin \mathcal{E}(\bpo + \Delta\bd(\bpo;t,\bb))$, hence the contrapositive $\mathcal{E}(\bpo + \Delta\bd(\bpo;t,\bb)) \subset \epo$.

Now, let $\bs \in \partial_{\bp}V(\bpo + \Delta\bd(\bpo;t,\bb);t,\bb)$. Then there is $\hat{\bu} \in \Rn$ with $\hat{\bu}_{\epdo} \geqslant \boldsymbol{0}$ and $\hat{\bu}_{\ecpdo} = \boldsymbol{0}$ such that
\[
\bs = \bb + t(\bpo + \Delta\bd(\bpo;t,\bb)) - \matr{A}_{\epdo}\matr{D}_{\epdo}\hat{\bu}_{\epdo}.
\]
In particular,
\[
\bs - t \Delta\bd(\bpo;t,\bb) = \bb + t\bpo  - \matr{A}_{\epdo}\matr{D}_{\epdo}\hat{\bu}_{\epdo}.
\]
From the inclusion $\mathcal{E}(\bpo + \Delta\bd(\bpo;t,\bb)) \subset \epo$, we find $\bs \in \{t \Delta\bd(\bpo;t,\bb)\} + \partial_{\bp}V(\bpo;t,\bb)$. Since $\bs$ was arbitrary, we deduce the inclusion
\[
\partial_{\bp}V(\bpo + \Delta\bd(\bpo;t,\bb);t,\bb) \subset \{t\Delta\bd(\bpo;t,\bb)\} + \partial_{\bp}V(\bpo;t,\bb).
\]

\noindent
\newline
\textbf{Part 5.} Next, we prove identity~\eqref{eq:identity_dd}. Let $\bs = \proj_{\partial_{\bp}V(\bpo+\Delta\bd(\bpo;t,\bb);t,\bb)}(\boldsymbol{0})$ and use both the previous inclusion and subdifferentiability to find
\[
\begin{alignedat}{1}
V(\bpo + \Delta\bd(\bpo;t,\bb);t,\bb) &\geqslant V(\bpo;t,\bb) + \langle\bs - t \Delta\bd(\bpo;t,\bb),\bpo + \Delta\bd(\bpo;t,\bb)-\bpo\rangle\\
&\qquad\qquad  + \frac{t}{2}\normsq{\bpo + \Delta\bd(\bpo;t,\bb) - \bpo} \\
&= V(\bpo;t,\bb) + \langle\bs - t \Delta\bd(\bpo;t,\bb), \Delta\bd(\bpo;t,\bb)\rangle \\
&\qquad\qquad  + \frac{\Delta^2t}{2}\normsq{\bd(\bpo;t,\bb)} \\
&= V(\bpo;t,\bb) + \Delta \langle\bs, \bd(\bpo;t,\bb)\rangle - \frac{\Delta^2t}{2}\normsq{\bd(\bpo;t,\bb)}
\end{alignedat}
\]
Using Equation~\eqref{eq:V_formula_descent} in the previous inequality and simplifying yields
\begin{equation}\label{eq:app_part4_1}
(t \Delta - 1)\normsq{\bd(\bpo;t,\bb)} \geqslant \langle\bs,\bd(\bpo;t,\bb)\rangle.
\end{equation}
Next, we use the inclusion $\mathcal{E}(\bpo + \Delta\bd(\bpo;t,\bb)) \subset \epo$, which was proven in Part 4, and subdifferentiability to find
\[
\begin{alignedat}{1}
V(\bpo;t,\bb) &\geqslant V(\bpo + \Delta\bd(\bpo;t,\bb);t,\bb) + \langle\bs,\bpo - (\bpo + \Delta\bd(\bpo;t,\bb)) \rangle \\
&\qquad \qquad + \frac{t}{2}\normsq{\bpo - (\bpo + \Delta\bd(\bpo;t,\bb))}\\
&= V(\bpo + \Delta\bd(\bpo;t,\bb);t,\bb) -\Delta\langle\bs,\bd(\bpo;t,\bb) \rangle + \frac{\Delta^{2}t}{2}\normsq{\bd(\bpo;t,\bb)}.\\
\end{alignedat}
\]
Using Equation~\eqref{eq:V_formula_descent} and substituting in the above yields
\begin{equation}\label{eq:app_part4_2}
\langle\bs,\bd(\bpo;t,\bb) \rangle \geqslant (t \Delta - 1)\normsq{\bd(\bpo;t,\bb)},
\end{equation}
Combining inequalities~\eqref{eq:app_part4_1} and~\eqref{eq:app_part4_2} yields the equality
\begin{equation}\label{eq:app_part4_3}
\langle\bd(\bpo;t,\bb),(1-t \Delta)\bd(\bpo;t,\bb) + \bs\rangle = 0.
\end{equation}

Finally, we use the projection characterization~\eqref{eq:char_projections} with 
\[
\bx = \boldsymbol{0},\, C = \partial_{\bp}V(\bpo + \Delta\bd(\bpo;t,\bb);t,\bb) \ \text{and} \ \by = -(1-t\Delta)\bd(\bpo;t,\bb), 
\]
and use~\eqref{eq:app_part4_3} to get the inequality
\begin{equation}\label{eq:app_part4_4}
\langle\bs, (1-t\Delta)\bd(\bpo;t,\bb) + \bs\rangle \leqslant 0.
\end{equation}
However, multiplying~\eqref{eq:app_part4_3} by $(1-t \Delta)$ and adding it to~\eqref{eq:app_part4_4} yields
\[
\begin{alignedat}{1}
\langle(1-t \Delta)\bd(\bpo;t,\bb) + \bs,(1-t \Delta)\bd(\bpo;t,\bb) + \bs\rangle = \normsq{(1-t\Delta)\bd(\bpo;t,\bb) + \bs} \leqslant 0.
\end{alignedat}
\]
We deduce $\bs = -(1-t \Delta)\bd(\bpo;t,\bb)$, that is, 
\[
-\proj_{\partial_{\bp}V(\bpo+\Delta\bd(\bpo;t,\bb);t,\bb)}(\boldsymbol{0}) \equiv \bd(\bpo+\Delta\bd(\bpo;t,\bb);t,\bb) = (1-t \Delta)\bd(\bpo;t,\bb).
\]


\subsection{Proof of Lemma~\ref{lem:tech_perturbation_I}} \label{app:tech_perturbation_I}
Use Proposition~\ref{lem:projection} with hyperparameter $\tzero + \delta_{0}$ to get
\[
\bd(\bpo;\tzero + \delta_{0},\bb) = \matr{A}\matr{D}(\bpo)\hat{\bu}(\bpo;\tzero + \delta_{0}, \bb) - \bb -(\tzero + \delta_{0})\bpo,
\]
where 
\[
\begin{alignedat}{1}
&\hat{\bu}(\bpo;\tzero + \delta_{0}, \bb) \in \argmin_{\bu \in \Rn} \normsq{\matr{A}\matr{D}(\bpo)\bu  - \bb -(\tzero + \delta_{0})\bpo} \\
&\qquad\qquad \text{subject to} \ 
	\begin{cases}
		\bu_{\epo} \geqslant \boldsymbol{0} \\
		\bu_{\ecpo} = \boldsymbol{0}.
	\end{cases}
\end{alignedat}
\]
Now let $\bv \in \Rm$ and use the change of variables $\bu = \hat{\bu}(\bpo;\tzero,\bb) + \bv$. The constraints of the problem become $\bv_{\epo} \geqslant -\hat{\bu}_{\epo}(\bpo;\tzero,\bb)$ and $\bv_{\ecpo} = \boldsymbol{0}$, while the objective function becomes
\[
\begin{alignedat}{1}
&\normsq{\matr{A}\matr{D}(\bpo)\bu  - \bb  -(\tzero + \delta_{0})\bpo} \\
	&\qquad \qquad \qquad = \normsq{\matr{A}\matr{D}(\bpo)\hat{\bu}(\bpo;\tzero,\bb) + \matr{A}\matr{D}(\bpo)\bv - \bb -(\tzero + \delta_{0})\bpo} \\
&\qquad \qquad \qquad = \normsq{\left(\matr{A}\matr{D}(\bpo)\hat{\bu}(\bpo;\tzero,\bb) -\bb - t\bpo\right) +\matr{A}\matr{D}(\bpo)\bv - \delta_{0}\bpo} \\
&\qquad \qquad \qquad = \normsq{\bd(\bpo;\tzero,\bb) + \matr{A}\matr{D}(\bpo)\bv - \delta_{0}\bpo},
\end{alignedat}
\]
From this, we obtain~\eqref{eq:descent_direction_change_in_t} and~\eqref{eq:nnls_problem_v}. Next, set $\bd(\bpo;\tzero,\bb) = \boldsymbol{0}$, $\delta_{0} = t - \tzero$ with $t \in [0,\tzero]$ and factor out the term $(1-t/\tzero)$ outside the optimization problem to obtain~\eqref{eq:descent_direction_change_in_t_2} and~\eqref{eq:nnls_problem_v_2}.


\subsection{Proof of Proposition~\ref{prop:generalized_homotopy}} \label{app:generalized_homotopy}
First, we invoke Lemma~\ref{lem:tech_perturbation_I}(ii) with 
\[
\bpo = \bp^{s}(\tzero,\bb), \ \hat{\bu}(\bpo;\tzero,\bb) = \matr{D}(\bp^{s}(\tzero,\bb))\bx^{s}(\tzero,\bb),
\]
and use the optimality conditions $\bd(\bp^{s}(\tzero,\bb);\tzero,\bb) = \boldsymbol{0}$ to simplify formula~\eqref{eq:descent_direction_change_in_t} to
\begin{equation}\label{eq:app_dd_linear_perturbations}
\bd(\bp^{s}(\tzero,\bb);t, \bb) = (1-t/\tzero)(\matr{A}\matr{D}(\bp^{s}(\tzero,\bb))\hat{\bv}(\tzero,t,\bb) + \tzero(\bp^{s}(\tzero,\bb))),
\end{equation}
where
    \begin{equation}\label{eq:app_nnls_linear_perturbations}
    \begin{alignedat}{1}
        &\hat{\bv}(\tzero, t,\bb) \in \argmin_{\bv \in \Rn} \normsq{\matr{A}\matr{D}(\bp^{s}(\tzero,\bb))\bv + t_{0}(\bp^{s}(\tzero,\bb))} \\
        &\text{subject to} \
            \begin{cases}
                &\bv_{j} \geqslant -|\bx^{s}_{j}(\tzero,\bb)|/(1-t/\tzero) \ \text{if $j \in \eqset(\bp^{s}(\tzero,\bb))$ and $\bx^{s}_{j}(\bb,\tzero) \neq 0$}  \\
                &\bv_{j} \geqslant 0 \ \text{if $j \in \eqset(\bp^{s}(\tzero,\bb))$ and $\bx^{s}_{j}(\bb,\tzero) = 0$,} \\
                &\bv_{\eqcset(\bp^{s}(\tzero,\bb))} = \boldsymbol{0}.
            \end{cases}
        \end{alignedat}
	\end{equation}	
At $t = \tzero$, problem~\eqref{eq:app_nnls_linear_perturbations} reduces to~\eqref{eq:nnls_problem_v_3} with the identification $\hat{\bv}(\tzero,\tzero,\bb) \equiv \hat{\bv}^{s}(\tzero,\bb)$.

We will now identify how the solution $\hat{\bv}^{s}(\tzero,\bb)$ behaves as we decrease $\tzero$. There are two potential sources of changes: the set of constraints in~\eqref{eq:app_nnls_linear_perturbations} and the minimal selection principle via the evolution rule~\eqref{eq:identity_dd} in Proposition~\ref{prop:descent_direction}. We turn to these two sources in turn.

First, consider the constraints in problem~\eqref{eq:app_nnls_linear_perturbations}:
\[
\text{$\bv_{j} \geqslant -\infty$ if $j \in \mathcal{E}(\bp^{s}(\tzero,\bb))$ and $|\bx^{s}(\tzero,\bb)| \neq \boldsymbol{0}$}.
\]
By assumption that $\tzero > 0$ and continuity, there is some $\epsilon_{0} > 0$ such that $0 \leqslant \tzero - \epsilon_0$,  $t \in [\tzero-\epsilon_{0},\tzero]$, and $\hat{\bv}^{s}(\tzero,\bb)$ remains the solution to problem~\eqref{eq:app_nnls_linear_perturbations}. It will remain the same as $\epsilon_{0}$ decreases until either $\epsilon_{0} = \tzero$ or there exists $j \in \mathcal{E}(\bp^{s}(\tzero,\bb))$ with $|\bx^{s}_{j}(\tzero,\bb)| \neq \boldsymbol{0}$ such that $\hat{\bv}^{s}_{j}(\tzero,\bb) = -|\bx^{s}_{j}(\tzero,\bb)|/(1-t/\tzero)$, that is, a constraint becomes satisfied with equality. In the latter case, we can rearrange this expression to obtain
	\[
	t = \tzero\left(1 - \frac{|\bx^{s}_{j}(\tzero,\bb)|}{|\hat{\bv}^{s}_{j}(\tzero,\bb)|}\right).
	\]
	The first index $j \in \mathcal{E}(\bp^{s}(\tzero,\bb))$ with $\bx^{s}_{j}(\tzero,\bb) \neq 0$ and $\hat{\bv}^{s}(\tzero,\bb) \leqslant -|\bx^{s}_{j}(\tzero,\bb)|$, if it exists, is the one whose ratio $|\bx^{s}_{j}(\tzero,\bb)|/|\hat{\bv}^{s}_{j}(\tzero,\bb)|$ is minimized. Hence
	\[
	T_{-}(\tzero,\bb,\hat{\bv}^{s}) \coloneqq \tzero \left(1 - \inf_{\substack{j \in \eqset(\bp^{s}(\tzero,\bb)) \\ \bx^{s}_{j}(\tzero,\bb) \neq 0 \\ \hat{\bv}_{j}^{s}(\tzero,\bb) \leqslant -|\bx_{j}^{s}(\tzero,\bb)|}} \frac{|\bx_{j}^{s}(\tzero,\bb)|}{|\hat{\bv}^{s}_{j}(\tzero,\bb)|}\right)
	\]
	is the smallest number lesser than $\tzero$ for which $\hat{\bv}^{s}(\tzero,\bb)$ solves problem~\eqref{eq:app_nnls_linear_perturbations}.

Now, consider the descent direction~\eqref{eq:app_dd_linear_perturbations}, suppose $t \in [\max(0,T_{-}(\tzero,\bb,\hat{\bv}^{s})),\tzero]$ and let
\[
\bxi^{s}(\tzero,\bb) \coloneqq \matr{A}\matr{D}(\bp^{s}(\tzero,\bb))\hat{\bv}^{s}(\tzero,\bb) + \tzero(\bp^{s}(\tzero,\bb))
\]
so as to write
\[
\bd(\bp^{s}(\tzero,\bb); t, \bb) = (1-t/\tzero)\bxi^{s}(\tzero,\bb).
\]
The descent direction depends linearly on $t$, and so the corresponding maximal descent time $\Delta_{*}(\bp^{s}(\tzero,\bb);t,\bb)$ in Proposition~\ref{prop:descent_direction}(i) is inversely proportional to $t$. More precisely:
\begin{equation}\label{eq:app_mdt_decomposition}
\Delta_{*}(\bp^{s}(\tzero,\bb);t,\bb) = C^{s}(\tzero,\bb)/(1-t/\tzero).
\end{equation}
where
\[
    C^{s}(\tzero, \bb) \coloneqq
     \min_{j \in \{1,\dots,n\}} \left\{\frac{\sign{\left(\langle \matr{D}(\bp^{s}(\tzero,\bb))\Atop\bxi^{s}(\tzero,\bb),\be_j\rangle\right)} - \langle \matr{D}(\bp^{s}(\tzero,\bb))\Atop\bp^{s}(\tzero,\bb),\be_j\rangle)}{\langle\matr{D}(\bp^{s}(\tzero,\bb)) \Atop\bxi^{s}(\tzero,\bb),\be_j\rangle} \right\}.
\]
Furthermore, the evolution rule from Proposition~\ref{prop:descent_direction}(v) yields
\[
\bd(\bp^{s}(\tzero,\bb) + \Delta(1-t/\tzero)\bxi^{s}(\tzero,\bb); t, \bb) = (1-t\Delta)(1-t/\tzero)\bxi^{s}(\tzero,\bb).
\]
We now seek the smallest nonnegative number $T_{+}(\tzero,\bb) \leqslant \tzero$ in terms of $\Delta_{*}(\bp^{s}(\tzero,\bb);t,\bb)$ for which 
\[
1-t\Delta_{*}(\bp^{s}(\tzero,\bb);t,\bb) = \boldsymbol{0}
\]
for every $t \in [T_{+}(\tzero,\bb),\tzero]$. Equation~\eqref{eq:app_mdt_decomposition} gives
\[
\begin{alignedat}{1}
1-t\Delta_{*}(\bp^{s}(\tzero,\bb);t,\bb) = \boldsymbol{0} &\iff 1-tC^{s}(\tzero,\bb)/(1-t/\tzero) = 0 \\
&\iff 1 - t/\tzero + tC^{s}(\tzero,\bb) = 0\\
&\iff t = \tzero/(1+\tzero C^{s}(\tzero,\bb)).
\end{alignedat}
\]
The critical value is
\[
T_{+}(\tzero,\bb) \coloneqq \frac{\tzero}{1+\tzero C^{s}(\tzero,\bb)}.
\]
Hence letting
\[
t_{1} \coloneqq \max(T_{-}(\tzero,\bb,\hat{\bv}^{s}),T_{+}(\tzero,\bb)),
\]
for every $t \in (t_{1},\tzero]$ there exists some $\Delta \in [0,\Delta_{*}(\bp^{s}(\tzero,\bb);t,\bb))$ for which $1 - t\Delta = 0$. Note that $0 \leqslant t_{1} < \tzero$ since $0 \leqslant T_{+}(\tzero,\bb) < \tzero$ and $T_{-}(\tzero,\bb,\hat{\bv}^{s}) < \tzero$. In particular, we have that $t_{1} = 0 \implies T_{+}(\tzero,\bb) = 0$ and
\[
T_{+}(\tzero,\bb) = 0 \iff C^{s}(\tzero, \bb) = +\infty \iff \Atop\bxi^{s}(\tzero,\bb) = \boldsymbol{0}.
\]

Taken together, we arrive at the following result: For every $t \in (t_{1},\tzero]$, we have
\[
\bd\left(\bp^{s}(\tzero,\bb) + \left(\frac{1}{t}-\frac{1}{\tzero}\right)\bxi^{s}(\tzero,\bb); t, \bb \right) = \boldsymbol{0}.
\]
Using the optimality conditions~\eqref{eq:bpdn_full_oc} and Lemma~\ref{lem:discontinuity}, we conclude
\[
\bp^{s}(t,\bb) = \begin{cases}
 		&\bp^{s}(\tzero,\bb) + \left(\frac{1}{t} - \frac{1}{\tzero}\right)\bxi^{s}(\tzero,\bb) \quad \text{if $t_{1} > 0$}, \\
 		&\bp^{s}(\tzero,\bb) \quad \text{otherwise,}
 		\end{cases}
\]
is the solution to~\eqref{eq:lasso_dual} at hyperparameter $t$ and data $\bb$ on $[t_{1},\tzero]$. Furthermore,
\[
\begin{alignedat}{1}
	t\bp^{s}(t,\bb) &= \left(\frac{t}{\tzero}\right)\tzero\bp^{s}(\tzero,\bb) + \left(1-\frac{t}{\tzero}\right)\bxi^{s}(\tzero,\bb)\\
	&= \tzero\bp^{s}(\tzero,\bb) - \left(1 - \frac{t}{\tzero}\right)\tzero\bp^{s}(\tzero,\bb) \\
	&\qquad\qquad + \left(1 - \frac{t}{\tzero}\right)(\matr{A}\matr{D}(\bp^{s}(\tzero,\bb)) + \tzero(\bp^{s}(\tzero,\bb)))\\
	&= \tzero\bp^{s}(\tzero,\bb) + \left(1-\frac{t}{\tzero}\right)\left(\matr{A}\matr{D}(\bp^{s}(\tzero,\bb))\hat{\bv}^{s}(\tzero,\bb) \right) \\
	&= \matr{A}\left[\bx^{s}(\tzero,\bb) + \left(1 - \frac{t}{\tzero}\right)\matr{D}(\bp^{s}(\tzero,\bb))\hat{\bv}^{s}(\tzero,\bb)\right] - \bb,
\end{alignedat}
\]
and so we deduce
\[
\bx^{s}(t,\bb) = \bx^{s}(\tzero,\bb) + \left(1-\frac{t}{\tzero}\right)\matr{D}(\bp^{s}(\tzero,\bb))\hat{\bv}^{s}(\tzero,\bb)
\]
is the primal solution to~\eqref{eq:lasso_primal} at hyperparameter $t$ and data $\bb$ on $[t_{1},\tzero]$.

\bibliography{proj-bib}
\end{document}